\renewcommand{\theequation}{\thesection.\arabic{equation}}
\newtheorem{theorem}{Theorem}
\newtheorem{lemma}{Lemma}
\newtheorem{proposition}{Proposition}
\newtheorem{corollary}{Corollary}
\newtheorem{remark}{Remark}
\newtheorem{definition}{Definition}
\newtheorem{property}{Property}
\newcommand{\eqnsection}{
\renewcommand{\theequation}{\thesection.\arabic{equation}}
    \makeatletter
    \csname  @addtoreset\endcsname{equation}{section}
    \makeatother}
\def\d{\, \mathrm{d}}
\def\w{\omega}
\def\W{\Omega}
\def\ccc{{\cal C}}
\def\Z{{{\Bbb Z}}}
\def\P{{{\Bbb P}}}
\def\N{{{\Bbb N}}}
\def\R{{{\Bbb R}}}
\def\ttt{{\mathcal T}}
\def\ccc{{\mathcal C}}
\def\ue{{\underline e}}
\def\oe{{\overline e}}
\def\E{{{\Bbb E}}}
\def\hhh{{{\mathcal H}}}
\def\indic{{{\mathbbm 1}}}
\DeclareMathOperator{\strength}{strength}
\def\Q{{\mathbb Q}}
\def\equilaw{{\stackrel{\rm law}{=}}}
\newcommand{\limites}[2]{\overset{#1}{\underset{#2}{\longrightarrow}}}
\newcommand{\equivalent}[1]{\overset{}{\underset{#1}{\sim}}}
\newcommand{\alphach}{{\check{\alpha}}}
\newcommand{\omegach}{{\check{\omega}}}
\newcommand{\eps}{\varepsilon}
\DeclareMathOperator{\diverg}{div}
\newcommand{\dive}{\diverg}
\newcommand{\dir}{\mathcal D}
\DeclareMathOperator{\Beta}{Beta}
\newcommand{\figsection}{
\renewcommand{\thefigure}{\thesection.\arabic{figure}}
    \makeatletter
    \csname  @addtoreset\endcsname{figure}{section}
    \makeatother}
\newcommand {\Sum} {\sum\limits}
\author[C. Sabot]{Christophe Sabot}
\address{
Université de Lyon, Université Lyon 1, CNRS UMR5208, Institut
Camille Jordan, 43, bd du 11 nov., 69622 Villeurbanne cedex}
\email{sabot@math.univ-lyon1.fr}
\author[L. Tournier]{Laurent Tournier}
\address{
Université Paris 13, Sorbonne Paris Cité, LAGA, CNRS UMR 7539,
93430 Villetaneuse} 
\email{tournier@math.univ-paris13.fr}
\title[Random walks in Dirichlet environment]{Random walks in Dirichlet environment:\\ an overview}
\keywords{Random walk in random environment, Dirichlet
distribution, Reinforced random walks, invariant measure viewed
from the particle} \subjclass[2010]{primary 60K37, 60K35}
\thanks{This work was supported by the ANR project MEMEMO2}
\begin{document}

\maketitle

\noindent{\bf Abstract:} 
Random Walks in Dirichlet Environment (RWDE) correspond to Random Walks in Random Environment (RWRE) on $\Z^d$
where the transition probabilities are i.i.d.\ at each site with a Dirichlet distribution. Hence, the model is parametrized by a family of positive weights $(\alpha_i)_{i=1, \ldots, 2d}$, one for each direction of $\Z^d$.
In this case, the annealed law is that of a reinforced random walk,
with linear reinforcement on directed edges. RWDE have a remarkable property of
statistical invariance by time reversal from which can be inferred several properties
that are still inaccessible for general environments, such as the equivalence of static and dynamic points of view and a
description of the directionally transient and ballistic regimes.
In this paper we give a state of the art on this model and several sketches of proofs presenting the core of the arguments.
We also present new computation of the large deviation rate function for one dimensional RWDE.
\section{Introduction}

Multidimensional  Random Walks in Random Environment (RWRE) have been the object of intense investigation in the last fifteen years.
Important progress has been made but some central questions remain open.
The ballistic case, i.e.\ the case where an a priori ballistic condition (as the $(T)_\gamma$ condition of Sznitman, \cite{sznitman2001})
is assumed, is by far the best understood (cf e.g. \cite{kalikow1981,sznitman-zerner1999,sznitman-00,sznitman2001,RS-Sepp,berger-ramirez2014}). 
The non ballistic case is more difficult and researches have concentrated on
the perturbative regime, where the environment is assumed to be a small perturbation of the simple random walk (see in particular 
\cite{sznitman-zeitouni2006,bolthausen-zeitouni2007}), or on two special cases,
the balanced case (\cite{Lawler-82}) and the Dirichlet case. 
The object of this paper is to give an overview of what is known in this last case:
Random Walks in Dirichlet Environment (RWDE) correspond to a special instance of i.i.d.\ random environment where the environment
at each site is chosen according to a Dirichlet random variable. Note that compared to the balanced case, where the drift of the environment at each site
is almost surely null, there is no almost sure restriction on the possible environments, more precisely the support of the law on the environment
is the whole set of environments. The main property that justifies the interest in this special case is a property of
statistical invariance by time reversal (cf.\ Section \ref{sec:reversal}) from which several results can be inferred and which is the main focus of this paper.

For simplicity, in this paper we restrict ourselves to the case of RWRE on $\Z^d$ to nearest neighbors, (except for Sections \ref{sec:rwde} and \ref{sec:reversal}),
even if most of the results on RWDE
could be extended to more general settings. Denote $(e_1, \ldots, e_d)$ the canonical basis of $\Z^d$ and set
$e_{i+d}=-e_i$ so that $(e_1, \ldots, e_{2d})$ is the set of unit vectors of $\Z^d$.
Recall that in this case the set of environments is the set
$$
\W=\Big\{(\w(x,x+e_i))_{x\in \Z^d, \; i=1,\ldots, 2d}\,:\,\w(x,x+e_i)\in (0,1),\, \sum_{i=1}^{2d} \w(x,x+e_i)=1\Big\}.
$$
  The classical model of RWRE is the model where the transition probabilities at each site
  $(\w(x,x+\cdot))$  are independent with a same law $\mu$, which is a distribution on the simplex
\begin{eqnarray}\label{simplex}
\Delta_{2d}=\Big\{(\w_i)_{i=1, \ldots, 2d}\in (0,1)^{2d}\,:\,\sum_{i=1}^{2d} \w_i=1\Big\}.
\end{eqnarray}
We denote by $\P=\mu^{\otimes \Z^d}$,  the law obtained on $\W$.
Traditionally, the quenched law, which is the law of the Markov chain $(X_n)_{n\ge 0}$ in a fixed environment $\w$,
is denoted by $P_{x_0,\w}$, i.e.\ we have $P_{x_0,\w}(X_0=x_0)=1$ and
\begin{eqnarray*}
P_{x_0,\w}\left(X_{n+1}=x+e_i \;|\; X_n=x, \;(X_k)_{k\le n}\right)=\w(x,x+e_i).
\end{eqnarray*}
The annealed law is the law obtained after expectation with respect to the environment
$$
P_{x_0}(\cdot)=\int P_{x_0,\w}(\cdot) \; \P(\d\w).
$$

To define Dirichlet environment, we fix some positive parameters $(\alpha_i)_{i=1, \ldots, 2d}$, one for each direction
$(e_i)_{i=1,\ldots, 2d}$ of $\Z^d$.
The RWDE with parameters $(\alpha_i)_{i=1, \ldots, 2d}$ is the RWRE with the following specific choice for $\mu$.
We choose $\mu={\mathcal{D}}((\alpha_i)_{i=1, \ldots, 2d})$, which is the Dirichlet law with parameters $(\alpha_i)$: 
it corresponds to the law on the simplex (\ref{simplex}) with distribution
$$
{\Gamma(\sum_{i=1}^{2d} \alpha_i)\over \prod_{i=1}^{2d}\Gamma(\alpha_i)}
\left(\prod_{i=1}^{2d} \w_i^{\alpha_i-1}\right)\indic_{\Delta_{2d}}(\w) \left(\prod_{i\neq i_0} \d\w_i\right),
$$
where $\Gamma(\alpha)=\int_0^\infty t^{\alpha-1}e^{-t} \d t$ is the usual Gamma function, and $i_0$ is an irrelevant choice
of index in $\{1, \ldots, 2d\}$ (i.e., we integrate on $\prod_{i\neq i_0} \d\w_i$ with $\w_{i_0}=1-\sum_{i\neq i_0} \w_i$).
We denote by $\P^{(\alpha)}$ the associated law on $\W$ and by $P_{x_0}^{(\alpha)}$ the annealed law of RWDE.

The Dirichlet law is a classical law that plays an important role in Bayesian statistics. It is also intimately related to
P\'olya urns (cf.~Section~\ref{Dir-Polya}), and this relation implies that the annealed law of RWDE is that of a directed edge reinforced
random walk (cf.~Section~\ref{dir-reinforced}).
The important property that justifies that RWDE is an interesting special case of RWRE, is the aforementioned property of statistical invariance by time reversal.
It asserts that on finite graphs, under a condition of zero divergence of the weights, the time reversed environment is again a Dirichlet
environment, in particular time reversed transition probabilities are independent at each site. In this paper we review what is known on Dirichlet environments
(with a few new results) and give sketches of proofs or new proofs of some of the results involving the time reversal property.
This property has been principally applied in the following directions:
\begin{itemize}
\item
Description of directionally transient/recurrent regimes in any dimension (implying in particular a positive
answer to directional 0-1 law).
\item
Proof of transience in dimension $d\ge 3$ for all parameters.
\item
Characterization of the parameters for which there is equivalence between static and dynamic points of view
in dimension $d\ge 3$.
\item
Characterization of ballistic regimes in dimension $d\ge 3$, which gives an answer in this context to 
the question of the equivalence between directional transience and ballisticity.
\end{itemize}

Let us also mention, on a different but somehow related model of random walk in space time Beta random environment, the recent work of Barraquand and Corwin,
\cite{Barraquand-Corwin-2015}, where
Tracy-Widom distribution appears in the second order correction in the large deviation principle. This model is closely related to the exactly solvable
model of log-gamma polymers introduced by Sepp\"al\"ainen, \cite{seppalainen2012}.

Let us describe the organization  of the paper. In Section~\ref{sec:rwde}, we give definition and basic properties of Dirichlet laws, P\'olya urns and
RWDE.
In Section~\ref{sec:reversal}, we state the important property of statistical invariance by time reversal. We give a proof, slightly shorter
than that of \cite{sabot-tournier2011}.
In Section~\ref{traps}, we explain the role of traps of finite size and define the important
parameter $\kappa$.
In Section~\ref{ss:results-reversal} we state the main results which are consequences of the time reversal property.
In Section~\ref{sec:CLT}, we consider the question of quenched central limit theorems in the case of ballistic RWDE.
In Section~\ref{sec:proofs}, we give sketches of proofs and some extensions of the results involving the
time reversal property. We do not optimize on the parameters, which makes the proofs more transparent than the originals.
Finally, in Section~\ref{sec:1D}, we describe the case of one-dimensional RWDE, for which special calculations can be made.
In particular, we give an explicit new computation of the rate function of one-dimensional RWDE.

\section{Random Walk in Dirichlet environment and directed edge reinforced random walk}
\label{sec:rwde}

\subsection{Dirichlet laws and P\'olya urns}\label{Dir-Polya}

Dirichlet distributions classically arise as the limit distribution of colors in P\'olya urns. Let us recall this result. 

An urn contains balls of $r$ different colors. Initially, $\alpha_i$ balls of color $i$ are present, for $i=1,\ldots,r$. After each draw, the ball is put back in the urn together with one additional ball of the same color. In other words, if $(X_n)_{n\ge1}$ denotes the sequence of colors drawn from the urn, and $\mathcal F_n=\sigma(X_1,\ldots,X_n)$, then we have for all $n$, for $i=1,\ldots,r$,
\begin{equation}
	P(X_{n+1}=i\,|\,\mathcal F_n)=\frac{N_i(n)}{\alpha_1+\cdots+\alpha_r+n},\label{eq:polya}
\end{equation}
where $N_i(n)=\alpha_i+\#\{1\le k\le n\,:\,X_k=i\}$ is the number of balls of color $i$ in the urn after $n$ draws. Such an urn is usually called \emph{reinforced} as the chosen color becomes more likely in the future draws. Let us underline that the formal definition of the model doesn't require the $\alpha_i$'s to be integers but merely positive real numbers. 

One can check that the proportion of balls of color $i$ after $n$ draws, $M_i(n)=\frac{N_i(n)}{\alpha_1+\cdots+\alpha_r+n}$, is a bounded martingale and therefore converges almost surely to a random variable~$U_i$. Note that the vector $(U_1,\ldots,U_r)$ takes values in the simplex
\[\Delta_r=\big\{(u_1,\ldots,u_r)\in(0,1)^r\,:\,u_1+\cdots+u_r=1\big\}.\]
Before stating the main result about P\'olya urns, let us give a central definition: 

\begin{definition}
Given positive real numbers $\alpha_1,\ldots,\alpha_r$, the Dirichlet distribution with parameters $\alpha_1,\ldots,\alpha_r$, is the distribution on $\Delta_r$ given by
\[\dir(\alpha_1,\ldots,\alpha_r)=\frac{\Gamma(\alpha_1+\cdots+\alpha_r)}{\Gamma(\alpha_1)\cdots\Gamma(\alpha_r)}u_1^{\alpha_1-1}\cdots u_r^{\alpha_r-1}\d\lambda_{\Delta_r}(u_1,\ldots,u_r),\]
where $\d\lambda_{\Delta_r}$ is the Lebesgue measure on $\Delta_r$, that is to say $\indic_{\Delta_r}(u_1,\ldots,u_r)\prod_{i\ne i_0} \d u_i$ for an arbitrary choice of $i_0\in\{1,\ldots,r\}$. 
\end{definition}
A classical proof of the above normalization would consist in writing
\[\Gamma(\alpha_1)\cdots\Gamma(\alpha_r)=\int_{(0,+\infty)^r}x_1^{\alpha_1-1}\cdots x_r^{\alpha_r-1}e^{-(x_1+\cdots+x_r)}\d x_1\cdots\d x_r,\]
and letting $u_i=x_i/(x_1+\cdots+x_r)$ for $i=1,\ldots,r-1$, and $v=x_1+\cdots+x_r$. This is tightly related to Property~\ref{pro:gamma_ratio} below. As a consequence of this normalization, we immediately deduce joints moments of marginals of the Dirichlet distribution: if $(V_1,\ldots,V_r)\sim\dir(\alpha_1,\ldots,\alpha_r)$, then
\begin{equation}
	E\big[(V_1)^{\xi_1}\cdots(V_r)^{\xi_r}\big]=\frac{\Gamma(\alpha_1+\xi_1)\cdots\Gamma(\alpha_r+\xi_r)}{\Gamma(\alpha_1+\xi_1+\cdots+\alpha_r+\xi_r)}\frac{\Gamma(\alpha_1+\cdots+\alpha_r)}{\Gamma(\alpha_1)\cdots\Gamma(\alpha_r)},\label{eq:moments}
\end{equation}
for all real numbers $\xi_1,\ldots,\xi_r\in\R$ such that $\alpha_i+\xi_i>0$ for all $i$. If $\xi_i+\alpha_i\le0$ for some~$i$, then the expectation is infinite. In the case when $\xi_i$'s are integers, the functional equation of the gamma function reduces the previous formula to an elementary product that has an interpretation in terms of P\'olya urn and leads to the next lemma.

\begin{lemma}\label{lem:polya}
	The vector $U=(U_1,\ldots,U_r)$ of asymptotic proportions of colors in the P\'olya urn follows the Dirichlet distribution $\dir(\alpha_1,\ldots,\alpha_r)$. Furthermore, conditional on $U$, the sequence $(X_n)_{n\ge1}$ is independent and identically distributed with, for $n\ge1$ and $i=1,\ldots,r$, 
	\[P(X_n=i\,|\,U_1,\ldots,U_r)=U_i.\]
\end{lemma}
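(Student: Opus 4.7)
The plan is to identify the finite-dimensional distributions of $(X_n)_{n\ge 1}$ with those of a Dirichlet-mixed i.i.d.\ sequence, and then read off both parts of the lemma from that identification.

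First I would compute $P(X_1=i_1,\ldots,X_n=i_n)$ directly from the reinforcement rule \eqref{eq:polya}. Writing $n_i=\#\{k\le n: i_k=i\}$ and $S=\alpha_1+\cdots+\alpha_r$, the product telescopes: each factor of the form $(\alpha_i+j)/(S+k)$ contributes, with the numerator running over $j=0,\ldots,n_i-1$ and the denominator over $k=0,\ldots,n-1$. Hence
\[
P(X_1=i_1,\ldots,X_n=i_n)=\frac{\prod_{i=1}^r \alpha_i(\alpha_i+1)\cdots(\alpha_i+n_i-1)}{S(S+1)\cdots(S+n-1)} = \frac{\Gamma(S)}{\Gamma(S+n)}\prod_{i=1}^r\frac{\Gamma(\alpha_i+n_i)}{\Gamma(\alpha_i)},
\]
using the functional equation $\Gamma(\alpha+m)=\alpha(\alpha+1)\cdots(\alpha+m-1)\Gamma(\alpha)$ for integer $m\ge0$. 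In particular the expression depends only on the counts, so $(X_n)$ is exchangeable.

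Next I would recognize the right-hand side as a moment of a Dirichlet vector: specializing \eqref{eq:moments} with integer $\xi_i=n_i$ gives exactly $E[V_1^{n_1}\cdots V_r^{n_r}]$ for $V=(V_1,\ldots,V_r)\sim\dir(\alpha_1,\ldots,\alpha_r)$. Now let $(Y_n)_{n\ge 1}$ be, conditionally on $V$, an i.i.d.\ sequence with $P(Y_n=i\,|\,V)=V_i$; then
\[
P(Y_1=i_1,\ldots,Y_n=i_n)=E[V_{i_1}\cdots V_{i_n}]=E[V_1^{n_1}\cdots V_r^{n_r}],
\]
so the finite-dimensional distributions of $(X_n)$ and $(Y_n)$ coincide, and hence the two processes have the same law.

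From this identification the lemma follows immediately. By the strong law of large numbers applied conditionally on $V$, the empirical frequencies of $(Y_n)$ converge almost surely to $V$; transported to $(X_n)$, this says that $M_i(n)\to U_i$ a.s.\ with $U=(U_1,\ldots,U_r)\sim\dir(\alpha_1,\ldots,\alpha_r)$, which is the first claim. For the second, note that $V$ is recovered as a measurable function of $(Y_n)_{n\ge 1}$ (namely, the a.s.\ limit of empirical frequencies), so the joint law of $((Y_n),V)$ is determined by the law of $(Y_n)$ alone; the same holds for $((X_n),U)$, and the two joint laws coincide, giving the conditional i.i.d.\ statement. The only step requiring a bit of care is this last measurability argument that transfers the conditional structure from $(Y_n)$ to $(X_n)$; everything else is a direct computation.
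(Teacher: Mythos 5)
Your proposal is correct and follows essentially the same route as the paper: compute the finite-dimensional distributions from the reinforcement rule, identify them with Dirichlet joint moments, and transfer the conditional i.i.d.\ structure and the law of the limiting proportions from the mixed sequence $(Y_n)$ back to $(X_n)$ via equality in law. The measurability point you single out (recovering $V$, resp.\ $U$, as the a.s.\ limit of empirical frequencies so that the joint law is determined by the sequence law) is exactly what the paper uses implicitly in asserting $((X_n)_n,U)\stackrel{\rm law}{=}((Y_n)_n,V)$; making it explicit is a fine touch but not a difference in method.
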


\begin{proof}
	It is a simple matter to check that, for any $x_1,\ldots,x_n\in\{1,\ldots,r\}$, if we let $n_i=\#\{1\le k\le n\,:\,x_k=i\}$ for $i=1,\ldots,r$, 
\begin{align}
P(X_1=x_1,\ldots,X_n=x_n)
	& =\frac{\prod_{i=1}^r \alpha_i(\alpha_i+1)\cdots(\alpha_i+n_i-1)}{(\alpha_1+\cdots+\alpha_r)\cdots(\alpha_1+\cdots+\alpha_r+n-1)}\notag\\
	& = \frac{\Gamma(\alpha_1+n_1)}{\Gamma(\alpha_1)}\cdots\frac{\Gamma(\alpha_r+n_r)}{\Gamma(\alpha_r)}\frac{\Gamma(\alpha_1+\cdots+\alpha_r)}{\Gamma(\alpha_1+\cdots+\alpha_r+n_1+\cdots+n_r)}\notag\\
	& = E[(V_1)^{n_1}\cdots(V_r)^{n_r}],\notag
\end{align}
where $V=(V_1,\ldots,V_r)\sim\dir(\alpha_1,\ldots,\alpha_r)$. Thus, $(X_n)_n$ has same law as i.i.d.~variables $(Y_n)_n$ with common law $V_1\delta_1+\cdots+V_r\delta_r$ given $V$, where $V\sim\dir(\alpha_1,\ldots,\alpha_r)$. By the law of large numbers for the $Y_n$'s given $V$, $V$ is the vector of almost sure limiting proportions of colors in the sequence $(Y_n)_n$, hence $((X_n)_n,U)$ has same law as $((Y_n)_n,V)$, which concludes. Note that this actually re-proves the almost sure convergence of proportions of colors toward $U$ without a martingale convergence theorem. 
\end{proof}

This lemma can also be seen as an instance of de Finetti's theorem (see for instance~\cite[p.\,268]{durrett}), since it is easily noticed that the sequence $(X_n)_n$ is exchangeable.

In the usual two-color case, we have $(U_1,1-U_1)\sim\dir(\alpha_1,\alpha_2)$, which reduces to
\[U_1\sim\Beta(\alpha_1,\alpha_2)=\frac1{B(\alpha_1,\alpha_2)}u^{\alpha_1-1}(1-u)^{\alpha_2-1}\indic_{(0,1)}(u)\d u,\] 
where $B(\alpha,\beta)=\frac{\Gamma(\alpha)\Gamma(\beta)}{\Gamma(\alpha+\beta)}$ is the Beta function. 

For later convenience, we will also consider more general index sets:
\begin{definition}
	For a finite set $I$ and $(\alpha_i)_{i\in I}\in(0,+\infty)^I$, the Dirichlet distribution $\dir((\alpha_i)_{i\in I})$ on
\[\Delta_I=\Big\{(u_i)_{i\in I}\in(0,1)^r\,:\,\sum_{i\in I}u_i=1\Big\}\]
is given by
\[\dir((\alpha_i)_{i\in I})=\frac{\Gamma(\sum_{i\in I}\alpha_i)}{\prod_{i\in I}\Gamma(\alpha_i)}\bigg(\prod_{i\in I}u_i^{\alpha_i-1}\bigg)\indic_{\Delta_I}((u_i)_{i\in I})\prod_{i\ne i_0} \d u_i,\]
for an irrelevant choice of $i_0$. 
\end{definition}

NB.~From Lemma~\ref{lem:polya} for instance, or continuity in distribution, it is natural to allow some parameters of the distribution, but not all, to be zero, by setting these coordinates equal to $0$ a.s.~and viewing $\dir(\alpha)$ as a distribution on $\Delta_{\{i\,:\,\alpha_i\ne 0\}}$.

\subsection{Properties of Dirichlet distributions}
\label{sec:dirichlet}
Let $I$ be finite, and $(\alpha_i)_{i\in I}\in (0,+\infty)^I$. 

Dirichlet distribution could equivalently have been defined as the law of a normalized Gamma vector. By routine computation, one can indeed check that:

\begin{property}\label{pro:gamma_ratio}
	Let $(W_i)_{i\in I}$ be independent random variables such that, for $i\in I$, \[W_i\sim\Gamma(\alpha_i,1)=\frac1{\Gamma(\alpha_i)}w^{\alpha_i-1}e^{-w}\indic_{(0,\infty)}(w)\d w.\] We have
	\[(U_i)_{i\in I}:=\frac1{\sum_{i\in I}W_i}\big(W_i\big)_{i\in I}\sim\dir((\alpha_i)_{i\in I}),\]
	and $(U_i)_{i\in I}$ is independent of $\sum_{i\in I} W_i$.
\end{property}

Recall that, if $X\sim\Gamma(\alpha,1)$ and $Y\sim\Gamma(\beta,1)$ are independent, then $X+Y\sim\Gamma(\alpha+\beta,1)$. Together with the previous property, this gives:

\begin{property}\label{pro:properties}
Assume $(U_i)_{i\in I}$ has Dirichlet distribution $\dir((\alpha_i)_{i\in I})$. 
\begin{description}
	\item[(Agglomeration)] Let $I_1,\ldots,I_n$ be a partition of $I$. The random variable $\left(\sum_{i\in I_k}U_i \right)_{k\in\{1,\ldots,n\}}$ on $\Delta_n$ follows the Dirichlet distribution $\dir((\sum_{i\in I_k} \alpha_i)_{1\leq k\leq n})$. 
	\item[(Restriction)] Let $J$ be a nonempty subset of $I$. The random variable $\left(\frac{U_i}{\sum_{j\in J} U_j}\right)_{i\in J}$ on $\Delta_J$ follows the Dirichlet distribution $\dir((\alpha_i)_{i\in J})$ and is independent of $\sum_{j\in J}U_j$. 
\end{description}
\end{property}

In particular, from the agglomeration property, the marginal $U_i$ of a Dirichlet vector $(U_i)_{i\in I}\sim\dir((\alpha_i)_{i\in I})$ follows the law $\Beta(\alpha_i,\sum_{j\ne i}\alpha_j)$. 

One may notice that Property~\ref{pro:properties} can also be elementarily deduced from Lemma~\ref{lem:polya} by identifying together or disregarding some colors. 

Property~\ref{pro:gamma_ratio} also enables to derive the following degenerate large weights limit, which means that the effect of reinforcement vanishes as the initial number of balls goes to infinity: 
\begin{equation}
	\dir((\lambda\cdot\alpha_i)_{i\in I})\limites{}{\lambda\to\infty}\delta_{\frac1{\sum_i \alpha_i}(\alpha_i)_i}.\label{eq:large-weights}
\end{equation}

In the opposite direction, with small weights, the distribution concentrates on the extreme points of the simplex, which means that the first draw from the urn becomes ``overwhelming'' as the initial weights go to 0: with $(\indic_{\{i\}})_j=\delta_{ij}$ for $i,j\in I$, 
\begin{equation}
	\dir((\lambda\cdot\alpha_i)_{i\in I})\limites{}{\lambda\to0^+}\sum_{i\in I}\frac{\alpha_i}{\sum_j \alpha_j}\delta_{\indic_{\{i\}}}.\label{eq:small-weights}
\end{equation}

(These asymptotics can be quickly obtained by taking the limit in the joint moments given in the proof of Lemma~\ref{lem:polya})

\subsection{RWDE on general graphs, and reinforcement}\label{dir-reinforced}

Let $G=(V,E)$ be a locally finite directed graph. Recall that \emph{directed} means that edges $e=(x,y)\in E\subset V\times V$ have a tail $\underline{e}=x$ and a head $\overline{e}=y$, while \emph{locally finite} means that vertices have finite degree. 

Let $\alpha=(\alpha_e)_{e\in E}\in(0,+\infty)^E$ be positive weights on the edges. 

We denote by $(X_n)_{n\ge0}$ the canonical process on $V$. 

\begin{definition}
	Let $x_0\in V$. The \emph{directed edge linearly reinforced random walk on $G$ with initial weights $\alpha$ and starting at $x_0$} is the process on $V$ with law $P^{(\alpha)}_{x_0}$ defined by: $P^{(\alpha)}_{x_0}$-a.s., $X_0=x_0$ and, for all $n\ge0$, for all edges $e\in E$,
	\[P^{(\alpha)}_{x_0}((X_n,X_{n+1})=e\,|\,X_0,\ldots,X_n)=\frac{N_e{(n)}}{\sum_{f\in E,\,\underline{f}=\underline e}N_f{(n)}}\indic_{\{\underline e=X_n\}},\]
	where $N_e{(n)}=\alpha_e+\#\{0\le k\le n-1\,:\,(X_k,X_{k+1})=e\}$.
\end{definition}

In other words, at time $n$, this walk jumps through a neighboring edge $e$ chosen with probability proportional to its current weight $N_e(n)$, where this weight initially was equal to $\alpha_e$ and then increased by $1$ each time the edge $e$ was chosen. 

Since edges are oriented, the decisions of this process are ruled by independent P\'olya urns, one per vertex, where outgoing edges play the role of colors, and $\alpha$ is the initial numbers of balls of each color. By Lemma~\ref{lem:polya}, this reinforced walk may equivalently be obtained by assigning a Dirichlet random variable $\omega_{(x,\cdot)}$ to each vertex $x$, and sampling i.i.d.~edges according to this variable in order to define the next step of the walk every time it is at $x$: this is the description of a \emph{random walk in Dirichlet random environment} (RWDE), that we formalize now. 

The set of \emph{environments} on $G$ is
\[\Omega_G=\prod_{x\in E}\Delta_{\{e\in E\,:\,\underline e=x\}}=\Big\{(\w_e)_{e\in E}\in(0,1]^E\,:\,\mbox{for all } x\in V, \sum_{e\in E,\,\underline{e}=x}\w_e=1\Big\},\]
and we shall denote by $\omega$ the canonical random variable on $\Omega_G$. 

\begin{definition}
	Let $x_0\in V$. For $\omega\in\Omega_G$, the \emph{quenched random walk in environment~$\omega$ starting at~$x_0$} is the Markov chain on $V$ starting at $x_0$ and with transition probabilities~$\omega$. We denote its law by $P_{x_0,\omega}$. Thus, $P_{x_0,\omega}$-a.s., $X_0=x_0$ and for all $n$, for all $e\in E$,
	\[P_{x_0,\omega}\big((X_n,X_{n+1})=e\,\big|\,X_0,\ldots,X_n\big)=\omega_e \indic_{\{\underline e=X_n\}}.\]

	The \emph{Dirichlet distribution on $G$ with parameter $\alpha$} is the product distribution on $\Omega_G$
	\[\P^{(\alpha)}=\prod_{x\in V}\dir((\alpha_e)_{\{e\in E\,:\,\underline e=x\}}).\]
	Thus, under $\P^{(\alpha)}$, the random variables $\omega_{(x,\cdot)}$, $x\in V$, are independent and follow Dirichlet distributions with parameters given by $\alpha_{(x,\cdot)}$, $x\in V$, respectively.

	Let us consider the joint law $P^{(\alpha)}_{x_0}$ of $(\omega,X)$ on $\Omega_G\times V^\N$ such that $\omega\sim\P^{(\alpha)}$ and the conditional distribution of $X$ given $\omega$ is $P_{x_0,\omega}$. Then, under $P^{(\alpha)}_{x_0}$, $X$ is the \emph{annealed random walk in Dirichlet environment with parameter $\alpha$ starting at $x_0$}. Its law is thus
	\[P_{x_0}^{(\alpha)}(X\in\cdot)=\E^{(\alpha)}[P_{x_0,\omega}(\cdot)].\]

\end{definition}

Because of the previous remark, it follows from Lemma~\ref{lem:polya} that the notation $P^{(\alpha)}_{x_0}$ is unconsequently ambigous: 

\begin{lemma}\label{lem:reinforcedrwre}
	Let $x_0\in V$. The directed edge linearly reinforced random walk on $G$ with initial weights $\alpha$ starting at $x_0$, and the annealed random walk in Dirichlet environment with parameter $\alpha$ starting at $x_0$, are equal in distribution. 
\end{lemma}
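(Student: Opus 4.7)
The plan is to verify equality of the two laws by matching their finite-dimensional distributions. Since both processes are $V$-valued and a.s.\ start at $x_0$, it suffices to show, for every finite admissible path $(x_0,x_1,\ldots,x_n)$ in $G$, that
\[
P^{(\alpha)}_{x_0}(X_1=x_1,\ldots,X_n=x_n)
\]
takes the same value under the reinforced-walk definition and under the annealed RWDE definition.

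For the reinforced walk, I would expand the joint probability as a telescoping product of the one-step conditional probabilities from the definition. Each factor has the form $N_{e_k}(k)/\sum_{\underline f = x_k} N_f(k)$, where $e_k=(x_k,x_{k+1})$. Because the reinforcement at a vertex $x$ depends only on the previous visits to $x$, I can regroup this product according to the vertex $\underline{e_k}$: for each $x\in V$ that is visited, let $n_e$ be the number of traversals of edge $e$ with $\underline e=x$ along the path, and let $n_x=\sum_{\underline e=x} n_e$ be the total number of departures from $x$. The regrouped contribution from $x$ is then exactly the probability that a P\'olya urn with initial composition $(\alpha_e)_{\underline e = x}$ produces the observed ordered sequence of outgoing edges from $x$; by the elementary computation carried out in the proof of Lemma~\ref{lem:polya}, this contribution equals
\[
\frac{\prod_{\underline e = x}\Gamma(\alpha_e+n_e)/\Gamma(\alpha_e)}{\Gamma\bigl(\sum_{\underline e=x}\alpha_e+n_x\bigr)/\Gamma\bigl(\sum_{\underline e=x}\alpha_e\bigr)}.
\]

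For the annealed RWDE, by the tower property, the same probability equals $\E^{(\alpha)}\!\bigl[\prod_{k=0}^{n-1}\w_{(x_k,x_{k+1})}\bigr]$. Since under $\P^{(\alpha)}$ the vectors $(\w_{(x,\cdot)})_{x\in V}$ are independent, this expectation factorizes across vertices as $\prod_{x\in V}\E^{(\alpha)}\!\bigl[\prod_{\underline e=x}\w_e^{n_e}\bigr]$. Applying the Dirichlet moment formula~(\ref{eq:moments}) with integer exponents $\xi_e=n_e$ yields exactly the vertexwise factor displayed above. Hence the two probabilities coincide.

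The only step that requires any care is the regrouping of the reinforcement factors by vertex: one must check that the product over time $k=0,\ldots,n-1$ of the urn probabilities at $x_k$ really decouples into independent-looking urns, one per vertex. This is immediate from the fact that $N_e(k)$ only depends on the subsequence of visits to $\underline e$, so the order in which different urns are consulted is irrelevant for the product. Once this is observed, the proof is essentially a direct comparison of two explicit formulas, and could alternatively be phrased, as in the paragraph preceding the lemma, as a conditional application of Lemma~\ref{lem:polya} vertex by vertex, using the independence of the $\w_{(x,\cdot)}$'s.
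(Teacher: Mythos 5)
Your proof is correct and is essentially the paper's argument made explicit: the paper justifies the lemma by noting that the reinforced walk is driven by independent P\'olya urns, one per vertex, and invoking Lemma~\ref{lem:polya}, while your vertexwise regrouping of the one-step factors and the comparison with the Dirichlet moment formula~(\ref{eq:moments}) is exactly the computation underlying that remark. The regrouping step you flag is indeed the only point needing care, and your justification of it (each urn's factors depend only on the past visits to its own vertex) is sound, so nothing is missing.
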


Given the natural definition of directed edge reinforced random walk, this property provides a first justification for the interest in RWDE. 

Let us mention that this connection was first used in the context of (non oriented) edge reinforced random walks on trees by Pemantle~\cite{pemantle1988} where, due to the absence of cycles, independence between the P\'olya urns still holds. On other graphs, non oriented edge reinforced random walks can be seen as random walks in a correlated, yet rather explicit, random environment. This leads to very different behaviors and techniques, see for instance~\cite{coppersmith,Keane-Rolles,Merkl-Rolles,sabot-tarres2011,angel-crawford-kozma}.
RWDE were first considered for their own as a special instance of RWRE in $\Z^d$ by Enriquez and Sabot, \cite{enriquez-sabot2006}.

For any vertex $x$, we let $\alpha_x$ be the sum of the weights of the edges exiting from $x$: \[\alpha_x=\sum_{e\in E,\,\underline{e}=x} \alpha_e.\] 
With this notation, when $G$ is finite, the Dirichlet distribution may be written as
\begin{equation}
\dir(\alpha)=\frac1{Z_\alpha}\prod_{e\in E}\w_e^{\alpha_e-1}\prod_{e\in\widetilde{E}}\d \w_e,\qquad\text{where}\quad
Z_\alpha=\frac{\prod_{e\in E}\Gamma(\alpha_e)}{\prod_{x\in V}\Gamma(\alpha_x)},\label{eq:defZ}
\end{equation}
and $\widetilde{E}$ is obtained from $E$ by removing arbitrarily, for each $x\in V$, one edge with origin~$x$. 
From this we can infer the following formula for the moments of $\omega$:
\begin{eqnarray}\label{moments}
	\E^{(\alpha)}\bigg[ \prod_{e\in E} \omega_e^{\xi_e}\bigg] = {Z_{\alpha+\xi}\over Z_\alpha}=\left(\prod_{e\in E} {\Gamma(\alpha_e+\xi_e)\over \Gamma(\alpha_e)}\right)
\left(\prod_{x\in V} {\Gamma(\alpha_x)\over \Gamma(\alpha_x+\xi_x)}\right),
\end{eqnarray}
for every function $(\xi_e)\in \R^E$ such that $\alpha_e+\xi_e>0$ for all $e$, and where as usual we write $\xi_x=\sum_{e, \underline e =x} \xi_e$.
When $\xi_e+\alpha_e\le0$ for some edge $e$, the expectation is infinite.

In the following, we are mainly interested in the case of $\Z^d$ with nearest-neighbor edges. There we always assume that weights are translation invariant, therefore given by $2d$ parameters $\alpha_1,\ldots,\alpha_{2d}$, so that for any $x\in\Z^d$, and $i=1,\ldots,2d$,
\[\alpha_{(x,x+e_i)}=\alpha_i,\]
where $(e_1,\ldots,e_d)$ is the canonical basis of $\Z^d$ and we let $(e_{1+d},\ldots,e_{2d})=-(e_1,\ldots,e_d)$. See figure~\ref{fig:Zd}. 

\begin{figure}
\begin{center}
\includegraphics[height=2.5cm]{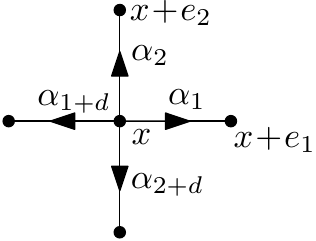}
\caption{Weights on the edges starting at~$x\in\Z^d$}
\label{fig:Zd}
\end{center}
\end{figure}

\section{The property of statistical invariance by time reversal}\label{sec:reversal}
\subsection{The main lemma and a probabilistic proof}
Consider a directed graph $G=(V,E)$ with a family of positive weights $(\alpha_e)_{e\in E}$.
Assume that $G$ is finite and strongly connected, i.e.\ that for any $x$ and $y$, there is a directed path
in $G$ from $x$ to $y$. Consider the dual graph $\check G=(V, \check E)$ obtained by reversing all edges, i.e.
$$
\check E=\{(y,x)\,:\, (x,y)\in E\}.
$$ 
For an edge $e\in E$ we denote $\check e\in \check E$ the associated reversed edge.
We define the family of reversed weights $(\check\alpha_e)_{e\in \check E}$ by
$$
\check\alpha_{\check e}=\alpha_{e}, \;\;\; \forall e\in E.
$$

We define the divergence operator on the graph $G$ as the linear operator $\dive:\R^E\to \R^V$
given by
\begin{eqnarray}\label{divergence}
\dive(\theta)(x)=\sum_{e,\,\underline e=x}\theta_e-\sum_{e,\,\overline e=x } \theta_e, \;\;\; \forall \theta \in \R^E, \;\forall x\in V.
\end{eqnarray}

Let $(\w_e)_{e\in E}$ be an environment on the graph $G$. Since $G$ is finite and strongly connected, there exists
an invariant probability for the quenched Markov chain $P^\w(\cdot)$. Denote it by $(\pi^\w(x))_{x\in V}$.
We define the time reversed environment $(\check \w_e)_{e\in \check E}$, which is an environment on the dual graph
$\check G$, by
$$
\check\w_{y,x}={\pi^\w(x)\over \pi^\w(y)} \w_{x,y}, \;\;\; \forall (x,y)\in E.
$$
The following lemma was stated in \cite{sabot2011transience}, Lemma~1, where it was first given an analytic proof that will be discussed in Subsection~\ref{subsec:analytic} below. A much shorter probabilistic proof was given in \cite{sabot-tournier2011}.
\begin{lemma}
\label{lem:reversal}
Assume $G$ is finite and $\diverg\alpha=0$. Then 
\begin{equation}
\left( \w\sim\P^{(\alpha)}\right) \Rightarrow\left( \omegach\sim\P^{(\alphach)}\right).
\end{equation}
\end{lemma}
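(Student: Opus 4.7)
My plan is to treat the lemma as a change-of-variables computation for two explicit densities, reducing it to a Jacobian identity for $\pi^\omega$. First I would note that both $\P^{(\alpha)}$ and $\P^{(\alphach)}$ are absolutely continuous on their respective product simplices with densities proportional to $\prod_{e\in E}\w_e^{\alpha_e-1}$ and $\prod_{\check e\in\check E}\omegach_{\check e}^{\alphach_{\check e}-1}$, and that by (\ref{eq:defZ}) their normalizing constants $Z_\alpha$ and $Z_{\alphach}$ coincide: indeed, $\diverg\alpha=0$ forces $\alpha_x=\alphach_x$ at every vertex, while $\prod_e\Gamma(\alpha_e)$ is invariant under the bijection $e\leftrightarrow\check e$. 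Next I would verify that the map $T\colon\omega\mapsto\omegach$ is a smooth involution between $\Omega_G$ and $\Omega_{\Gch}$; the key observation is that $\pi^\omega$ is still invariant for $\omegach$ on $\Gch$, after which $T\circ T=\mathrm{Id}$ is immediate.

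Substituting $\omegach_{\check e}=\pi^\omega(\oe)\w_e/\pi^\omega(\ue)$ in $\prod_e\omegach_{\check e}^{\alpha_e-1}$ and regrouping the $\pi^\omega(x)$ factors vertex by vertex gives
\[
\prod_e\omegach_{\check e}^{\alpha_e-1}=\prod_e\w_e^{\alpha_e-1}\;\prod_{x\in V}\pi^\omega(x)^{-\diverg\alpha(x)+d_{\mathrm{out}}(x)-d_{\mathrm{in}}(x)},
\]
so, under $\diverg\alpha=0$, the lemma reduces to proving the Jacobian identity
\[
\lvert\det DT(\omega)\rvert=\prod_{x\in V}\pi^\omega(x)^{d_{\mathrm{in}}(x)-d_{\mathrm{out}}(x)}
\]
in any coordinate system obtained by removing one outgoing edge at each vertex, on $G$ and on $\Gch$ independently.

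Establishing this Jacobian identity is the step I expect to be the real obstacle. My preferred approach would use the Markov-chain tree theorem, which writes $\pi^\omega(x)=\mathcal A_x(\omega)/\sum_y\mathcal A_y(\omega)$ with $\mathcal A_x(\omega)$ a polynomial sum over in-arborescences rooted at $x$. This representation makes $\partial\pi^\omega(x)/\partial\w_e$ explicitly computable; expanding the determinant block by block and collecting the contributions of the removed reference edges at each vertex should, after a combinatorial cancellation driven by $\diverg\alpha=0$, produce the prescribed product of $\pi^\omega$-factors. The bookkeeping here is the delicate part, since the dependence of $\pi^\omega$ on $\omega$ couples all vertices simultaneously.

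An alternative, and possibly shorter, route would use the gamma lift of Property~\ref{pro:gamma_ratio}: pick independent $W_e\sim\Gamma(\alpha_e,1)$ so that $\w_e=W_e/W_{\ue}$, and try to exhibit independent $\check W_{\check e}\sim\Gamma(\alpha_e,1)$ with $\omegach_{\check e}=\check W_{\check e}/\check W_{\oe}$, which by Property~\ref{pro:gamma_ratio} would conclude. The natural guess $\check W_{\check e}=\pi^\omega(\oe)W_e$ produces the correct ratio only after invoking the invariance of $\pi^\omega$, and the crucial point—where the zero-divergence hypothesis must enter—is to verify joint independence of the family $(\check W_{\check e})_{\check e\in\check E}$ with the prescribed gamma laws. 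Reconciling the ratio equation with independence across reversed edges is, in my view, where the actual difficulty lies on this route.
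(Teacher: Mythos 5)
Your reduction is a legitimate reformulation, but it is not a proof: all of the content of the lemma is concentrated in the Jacobian identity for $T:\w\mapsto\omegach$, and you do not establish it. The steps you do carry out (equality $Z_\alpha=Z_{\alphach}$ under $\diverg\alpha=0$, regrouping the $\pi^\w$-factors vertex by vertex, invariance of the reference measure under the choice of removed edges) are the easy part; the determinant computation, which you yourself flag as "the real obstacle" and only sketch via the Markov-chain tree theorem, is exactly where the original analytic proof of this lemma lives (it is the technical change-of-variables argument of the appendix of the arXiv version of \cite{sabot2011transience}, behind Lemma~\ref{chgt-variables}), and the present paper deliberately replaces it by a short probabilistic argument. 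The gamma-lift alternative is likewise left at the level of a guess ($\check W_{\check e}=\pi^\w(\oe)W_e$) whose crucial point --- joint independence with the prescribed Gamma laws --- you acknowledge you cannot verify; indeed $\pi^\w$ depends on all the $W_e$'s simultaneously, so this is far from routine. As it stands, the proposal reduces the lemma to an unproved statement that is essentially equivalent to it.

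Two further points. First, your substitution $\omegach_{\check e}=\pi^\w(\oe)\w_e/\pi^\w(\ue)$ is inverted relative to the definition $\check\w_{y,x}=\frac{\pi^\w(x)}{\pi^\w(y)}\w_{x,y}$ (with $e=(x,y)$, so $\omegach_{\check e}=\pi^\w(\ue)\w_e/\pi^\w(\oe)$); this flips the sign of the exponent $d_{\mathrm{out}}-d_{\mathrm{in}}$ in your reduction, so the target identity should read $\lvert\det DT(\w)\rvert=\prod_{x\in V}\pi^\w(x)^{d_{\mathrm{out}}(x)-d_{\mathrm{in}}(x)}$. Second, note that $T$ and hence its Jacobian do not involve $\alpha$ at all, so there can be no "combinatorial cancellation driven by $\diverg\alpha=0$" inside the determinant expansion: the zero-divergence hypothesis enters only in matching the $\alpha$-dependent exponents of $\pi^\w(x)$ and in $Z_\alpha=Z_{\alphach}$. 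For comparison, the paper's proof avoids any Jacobian: it uses $\w_\sigma=\omegach_{\check\sigma}$ for directed cycles, the identities $N_\sigma(e)=N_{\check\sigma}(\check e)$, $N_\sigma(x)=N_{\check\sigma}(x)$ and $\alpha_x=\alphach_x$, computes joint moments of cycle weights from \eqref{moments}, and concludes because on a finite strongly connected graph the law of the cycle probabilities determines the law of the environment. If you want to complete your route, you must actually prove the Jacobian identity (e.g.\ via the tree-theorem representation of $\pi^\w$), and that computation is of a different order of difficulty than the rest of your argument.
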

\begin{proof}
We give here a proof in the spirit of that of \cite{sabot-tournier2011}, but even shorter. We say that $\sigma=(x_0,x_1,\ldots,x_{n-1},x_n)$ is a directed path if 
 $(x_i,x_{i+1})\in E$ for all $i=0, \ldots, n-1$. It is a directed cycle if moreover $x_n=x_0$.
 For a directed path we set
 \begin{eqnarray}
 \label{wsigma}
  \w_\sigma= \prod_{i=0}^{n-1} \w_{x_i, x_{i+1}}.
 \end{eqnarray}
 If $\sigma$ is a path we write
 $\check\sigma= (x_n, \ldots, x_0)$ the reversed path, which is a directed path in the dual graph $\check G$.
 If $\sigma$ is a directed cycle, we clearly have
 \begin{eqnarray}\label{checkw}
  \omega_\sigma=\omegach_{\check\sigma}.
  \end{eqnarray}
For a cycle $\sigma=(x_0,x_1,\ldots,x_{n-1},x_n=x_0)$
we write
$$N_\sigma(e)= \sum_{i=0}^{n-1} \indic_{\{(x_i,x_{i+1})=e\}}, \;\;\; N_\sigma(x)= \sum_{i=0}^{n-1} \indic_{\{x_i=x\}},
$$
the number of visits of the directed edge $e$ and of the vertex $x$ by the cycle $\sigma$.
If $\mathcal C$ is a finite family of cycles, we write
$$
N_\ccc(e)=\sum_{\sigma\in \ccc}N_\sigma(e), \;\;\; N_\ccc(x)=\sum_{\sigma\in \ccc}N_\sigma(x).
$$
We also write $\check\ccc=\{\check\sigma\,:\,\sigma\in \ccc\}$ for the family of reversed cycles.
We clearly have from (\ref{moments})
$$
\E^{(\alpha)}\bigg[ \prod_{\sigma \in \ccc} \w_\sigma\bigg]=
\left({ \prod_{e\in E} \Gamma(\alpha_e+N_\ccc(e))\over  \prod_{e\in E} \Gamma(\alpha_e)}\right)
\left({ \prod_{x\in E}  \Gamma(\alpha_x) \over
\prod_{x\in E} \Gamma(\alpha_x+N_\ccc(x))}\right).
$$
where we write $\alpha_x=\sum_{e, \; \underline e=x} \alpha_e$. The property $\dive(\alpha)=0$ is equivalent to the fact that
$\alpha_x=\check\alpha_x$ for all vertex $x$. Remark now that for a cycle $N_\sigma(e)=N_{\check\sigma}(\check e)$, and
$N_\sigma(x)=N_{\check\sigma}(x)$ for all edge $e$ and vertex $x$.
Hence from (\ref{checkw}) and the previous remarks, changing $e$ to reversed edges $\check e$, we get
\begin{eqnarray*}
\E^{(\alpha)}\bigg[ \prod_{\sigma \in \check\ccc} \check\w_{\sigma} \bigg]
&=&
\left({ \prod_{e\in \check E} \Gamma(\check\alpha_e+N_{\check\ccc}(e))\over  \prod_{e\in \check E} \Gamma(\check \alpha_e)}\right)
\left({ \prod_{x\in E}  \Gamma(\check \alpha_x) \over
\prod_{x\in E} \Gamma(\check \alpha_x+N_{\check\ccc}(x))}\right)
\\
&=&
\E^{(\check \alpha)}\bigg[ \prod_{\sigma \in \check\ccc} \w_{\sigma} \bigg].
\end{eqnarray*}
By considering concatenations of cycles with themselves, this exactly means that under $\P^{(\alpha)}$ all joint moments of cycles of $\check\w$ coincide with the moments of cycles
under $\P^{(\check\alpha)}$. It implies that the law of  $(\check \w_{ \sigma})_{\sigma\text{ cycle of $\check G$}} $ under $\P^{(\alpha)}$ coincides
with the law $(\w_{\sigma})_{\sigma\text{ cycle of $\check G$}} $ under $\P^{(\check\alpha)}$.
But the law of cycle probabilities determine the law of the Markov chain. Indeed, since $G$ is finite and strongly connected, it is recurrent and thus
$$
\w_{(x,y)}=\sum_\sigma \w_{\sigma},
$$
where the sum runs on the cycles that start by the edge $(x,y)$ and come back only once to $x$. Hence 
$\check\w$ under $\P^{(\alpha)}$ has distribution $\P^{(\check\alpha)}$. 
\end{proof}

We will use several times the following corollary  of this lemma. Assume that the graph is finite and $\dive(\alpha)=0$, and
let $x\in V$ be a specified vertex, and $(y,x)\in E$.
Then, under $\P^{(\alpha)}$,
\begin{eqnarray}\label{returnproba}
P_{x,\w}\left( X_n \hbox{ comes back to $x$ by the edge $(y,x)$}\right)\sim \hbox{Beta}(\alpha_{(y,x)}, \alpha_x-\alpha_{(y,x)}).
\end{eqnarray}
Indeed, it comes from the fact that the left hand side term is the sum
$
\sum_{\sigma} \w_\sigma
 $
 where the sum runs on all cycles starting from $x$ and coming back only once to $x$ and by the edge $(y,x)$.
 By (\ref{checkw}), it equals $\sum_\sigma \check \w_{\check \sigma} = \check \w_{(x,y)}$, by Markov property.
 But $\check\w$ is distributed according to the Dirichlet environment $\P^{(\check \alpha)}$, which implies (\ref{returnproba})
 by the agglomeration property of Dirichlet distributions, cf.~Property~\ref{pro:properties}.

\subsection{Analytic approach.}\label{subsec:analytic} (This section is not necessary in the sequel and can be skipped in first reading.)
The original proof
was analytic and based on a change of variable (published only in the arXiv version of \cite{sabot2011transience}). It is rather technical but gives extra information
on the distribution of the occupation measure of the RWDE.
We only give here the statement, the proof is available in the appendix of \cite{sabot2011transience} (arXiv version).
Let $e_0$ be a specified edge of the graph. Let $\hhh_{e_0}$ be the affine space defined by
\[
\hhh_{e_0}=\{ (z_e)_{e\in E}\in \R^E\,:\,z_{e_0}=1, \;\;
\dive(z)\equiv 0\}.
\]
and $\tilde\Delta_{e_0}$ be the set defined by
\[
\tilde\Delta_{e_0}=\hhh_{e_0}\cap(\R_+^*)^E.
\]
We define
$$
Z_e={\pi^\w_{\ue}\w_e\over
\pi^\w_{\ue_0}\w_{e_0}},
$$
the occupation measure of the edges of the graph, normalized so that $Z_{e_0}=1$. Clearly $(Z_e)_{e\in E}\in \tilde\Delta_{e_0}$.
In the stationary regime, it is proportional to the expected number of traversals of the edge $e$.
The proof was based on the explicit computation of the distribution  of the random variable $(Z_e)_{e\in E}$ under $\P^{(\alpha)}$. 

 Let $T$ be a spanning
tree of the graph $G$ such that $e_0\notin T$. (This is possible
since the graph is strongly connected and thus $e_0$ belongs to at
least one directed cycle of the graph.) We denote
$B=T\cup\{e_0\}$. Then
$(z\mapsto z_e)_{e\in B^c}$ is a dual basis of $\hhh_{e_0}$, it defines a natural measure
on $\tilde\Delta_{e_0}$,
$$
\d\lambda_{\tilde\Delta_{e_0}}=\prod_{e\in B^c} \d z_e,
$$
which does not depend on the choice of $T$, $e_0$. 
Let $x_0\in V$ be
any vertex, and denote $\ttt_{x_0}$ the set of directed spanning trees of the graph 
directed towards the vertex $x_0$.

\begin{lemma}\label{chgt-variables}
Under $\P^{(\alpha)}$, the random variable $(Z_e)_{e\in E}$ has the following distribution on~$\tilde\Delta_{e_0}$:
$$
\left({ \prod_{x\in V} \Gamma(\alpha_x) \over \prod_{e\in E} \Gamma(\alpha_e)}\right)\left( {\prod_{e\in E}
z_e^{\alpha_e-1} \over \prod_{x\in V} z_x^{\alpha_x}}\right)
\left(\sum_{T\in \ttt_{x_0}} \prod_{e\in T} z_e\right)
\d\lambda_{\tilde\Delta_{e_0}},
$$
where as usual $z_x=\sum_{e,\,\ue=x} z_e$
\end{lemma}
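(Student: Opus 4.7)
My plan is to prove the density formula by an explicit change of variables $\omega\leftrightarrow z$, with the spanning-tree enumerator emerging through the Matrix-Tree theorem.

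\emph{Bijection.} The map $\Phi : \tilde\Delta_{e_0} \to \Omega_G$ defined by $\omega_e := z_e/z_{\ue}$ is a bijection. Well-definedness is immediate from $\sum_{e:\ue=x}\omega_e = z_x/z_x = 1$, and its inverse is $\omega \mapsto (\pi^\omega_{\ue}\omega_e/(\pi^\omega_{\underline{e_0}}\omega_{e_0}))_{e\in E}$, which lies in $\tilde\Delta_{e_0}$ precisely because $\pi^\omega$-invariance translates to divergence-freeness of $(\pi^\omega_{\ue}\omega_e)_{e\in E}$.

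\emph{Density pullback.} Writing $d_x$ for the out-degree of $x$, substituting $\omega_e = z_e/z_{\ue}$ in the Dirichlet weight and collecting exponents vertex-by-vertex gives
\[ \prod_{e\in E}\omega_e^{\alpha_e-1} \;=\; \prod_{e\in E}z_e^{\alpha_e-1}\cdot\prod_{x\in V}z_x^{d_x-\alpha_x}. \]
In view of $Z_\alpha^{-1} = \prod_x\Gamma(\alpha_x)/\prod_e\Gamma(\alpha_e)$, the lemma reduces to the Jacobian identity
\[ \Big|\det\bigl(\tfrac{\partial\omega_e}{\partial z_f}\bigr)_{e,f\in B^c}\Big| \;=\; \prod_{x\in V}z_x^{-d_x}\cdot\sum_{T'\in\ttt_{x_0}}\prod_{e\in T'}z_e. \]
To set up this identity I would fix a directed spanning tree $T\in\ttt_{x_0}$ with $e_0\notin T$, set $B = T\cup\{e_0\}$, and use $(z_e)_{e\in B^c}$ as coordinates on $\tilde\Delta_{e_0}$ (the $z_e$ for $e\in T$ are affine functions of them, obtained by propagating divergence-freeness from the leaves of $T$ toward $x_0$), together with $(\omega_e)_{e\in B^c}$ as Dirichlet coordinates on $\Omega_G$; this is consistent since $B$ contains exactly one out-edge per vertex (the $T$-edge for $x\ne x_0$, and $e_0$ for $x_0$).

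\emph{Main obstacle: the Jacobian identity.} Factoring $d\omega_e = z_{\ue}^{-1}(dz_e - \omega_e\,dz_{\ue})$ already extracts the obvious diagonal prefactor $\prod_{e\in B^c}z_{\ue}^{-1} = \prod_{x\in V}z_x^{-(d_x-1)}$, but the remaining wedge $\bigwedge_{e\in B^c}(dz_e - \omega_e\,dz_{\ue})$ must be evaluated modulo the divergence constraints on $dz$ and the condition $dz_{e_0}=0$. The implicit dependence of $(dz_e)_{e\in T}$ on $(dz_f)_{f\in B^c}$ is where the combinatorial content sits: I expect the resulting $|B^c|\times|B^c|$ determinant to express as a principal minor of a weighted Laplacian on $G$ whose $x_0$-cofactor is, by the Matrix-Tree theorem, exactly $\sum_{T'\in\ttt_{x_0}}\prod_{e\in T'}z_e$, the missing factor $\prod_x z_x^{-1}$ coming from the vertex-normalization intrinsic to that cofactor. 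Combining with the density pullback then yields the stated density, since $z_x^{d_x-\alpha_x}\cdot z_x^{-d_x} = z_x^{-\alpha_x}$.
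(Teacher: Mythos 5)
Your overall route is the same as the one behind the paper's statement (the original proof of this lemma is precisely an analytic change of variables $\omega_e=z_e/z_{\ue}$, relegated to the appendix of the arXiv version of \cite{sabot2011transience}), and the parts you do carry out are sound: the map $\Phi$ is indeed a bijection between $\tilde\Delta_{e_0}$ and $\Omega_G$ (uniqueness of $\pi^\omega$ gives injectivity of the inverse), the pullback $\prod_e\omega_e^{\alpha_e-1}=\prod_e z_e^{\alpha_e-1}\prod_x z_x^{d_x-\alpha_x}$ is correct, $B^c$ is a legitimate set of Dirichlet coordinates because $B$ contains exactly one outgoing edge per vertex, and the diagonal extraction $\d\omega_e=z_{\ue}^{-1}(\d z_e-\omega_e\,\d z_{\ue})$ with prefactor $\prod_x z_x^{-(d_x-1)}$ is right. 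You have therefore correctly reduced the lemma to the Jacobian identity you display (which, since the reference measures on both sides are the right ones, is indeed equivalent to the statement). One small point to fix in the setup: an arborescence $T\in\ttt_{x_0}$ with $e_0\notin T$ need not exist for arbitrary $x_0$; you should take $x_0=\underline{e_0}$ (then every $T\in\ttt_{x_0}$ avoids $e_0$) and invoke the independence of $\sum_{T\in\ttt_{x_0}}\prod_{e\in T}z_e$ on $x_0$ (which for divergence-free $z$ follows from the Markov-chain tree theorem) to recover the general statement.

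The genuine gap is that the Jacobian identity itself — which is the entire technical content of the lemma — is not proved: you write that you ``expect'' the $|B^c|\times|B^c|$ determinant to be a principal minor of the weighted Laplacian, with the extra factor $\prod_x z_x^{-1}$ attributed vaguely to ``vertex-normalization''. This is not a routine application of the Matrix-Tree theorem: after the tree coordinates $(\d z_e)_{e\in T}$ are eliminated by propagating $\dive(\d z)=0$ and $\d z_{e_0}=0$, the matrix whose determinant you must evaluate is indexed by the edges of $B^c$, not by vertices, and its entries mix the $\omega_e$ with the (tree-dependent) coefficients of that linear solve. Identifying its determinant, up to the claimed $z$-monomial, with the $x_0$-cofactor of the matrix $(z_x\delta_{xy}-z_{(x,y)})$ requires an actual argument — e.g.\ an all-minors/forest version of the Matrix-Tree theorem, or computing instead the inverse Jacobian $\partial z/\partial\omega$ from the explicit arborescence formula for $\pi^\omega$ and checking the cancellations — and none of this is supplied. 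Since everything else in your outline is bookkeeping, the proof as written establishes only the (correct) reduction, not the lemma; the determinant evaluation is exactly the ``rather technical'' step the paper alludes to, and it still has to be done.
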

\begin{remark}
We can remark that this formula
is reminiscent of the distribution discovered by Diaconis and
Coppersmith
(\cite{coppersmith,Keane-Rolles})
which expresses edge-reinforced random walk as a mixture of
reversible Markov chains. Note that the sum on spanning trees can also be expressed
as a principal minor of the matrix with diagonal coefficients equal to $z_x$ and off diagonal coefficients
equal to $-z_{(x,y)}$. It does not depend on the choice of~$x_0$.
\end{remark}

We see that lemma \ref{lem:reversal} is a direct consequence of the
previous result. Indeed we see that lemma \ref{chgt-variables}
applied to the reversed graph $(\check G, \check E)$, starting
with the weights $\check\alpha_{\check e}=\alpha_e$ gives the same
integrand with $\alpha_x$ replaced by $\check \alpha_x=\sum_{e,
\oe=x}\alpha_e$. The two coincide after the change of variables
exactly when $\dive(\alpha)\equiv 0$.

\section{Preliminaries: traps of finite size. The parameter $\kappa$}\label{traps}

Dirichlet environments are \emph{elliptic}, in the sense that
\begin{equation}
	\P^{(\alpha)}\text{-a.s.,}\qquad\forall e\in E,\ \omega_e>0.\label{eq:elliptic}
\end{equation}
They do not however fulfill the common assumption of \emph{uniform ellipticity}, which means that a uniform positive lower bound $\omega_e\ge\eps>0$ would hold in~\eqref{eq:elliptic}.

Non uniform ellipticity of the environment can create traps of finite size, i.e.\ finite subsets in which the random walk may spend an atypically large time. The strength of a possible trap $A\subset V$ can be measured by the order of the tail of the distribution of the quenched Green function of the walk in $A$. 

Let us first consider the case of a subset consisting in a pair of neighbor vertices, i.e.~$A=\{x,y\}$ such that $(x,y),(y,x)\in E$. In the environment~$\omega$, starting at $x$, the number of visits to $x$ before quitting $A$ is geometric with parameter $1-\omega_{(x,y)}\omega_{(y,x)}$. Therefore, the Green function of the walk in $A$ satisfies
\[G_\omega^{\{x,y\}}(x,x)=\frac1{1-\omega_{(x,y)}\omega_{(y,x)}},\]
hence, since $\omega_{(x,y)}\sim\Beta(\alpha_{(x,y)},\alpha_x-\alpha_{(x,y)})$, $\omega_{(y,x)}\sim\Beta(\alpha_{(y,x)},\alpha_y-\alpha_{(y,x)})$ and these variables are independent, it is a simple check that
\[\E^{(\alpha)}\big[G_\omega^{\{x,y\}}(x,x)^s\big]<\infty\qquad\Leftrightarrow\qquad s<\alpha_{\{x,y\}}:=\alpha_x+\alpha_y-\alpha_{(x,y)}-\alpha_{(y,x)}.\]
In this case, the integrability exponent is the total weight of the edges going out of $A$.

This result extends to any finite subset $A$, as proved by Tournier in~\cite{tournier2009integrability}, in that the integrability exponent of $G_\omega^A(x,x)$, for $x\in A$, is given by the minimum total weight of outgoing edges among the (edge-)subsets of $A$ containing $x$. Let us only give a precise statement in the case of $\Z^d$ with translation invariant weights, where this simplifies: 

\begin{proposition} \label{thm:finite_trap}
	We consider the RWDE on $\Z^d$ with parameters $\alpha_1,\ldots,\alpha_{2d}$. 

Let $A$ be a finite connected subset of $\Z^d$ containing $0$. We have:
\[\E^{(\alpha)}\big[G_\omega^A(0,0)^s\big]<\infty\qquad\Leftrightarrow\qquad s<\min_{x,y\in A,\,|x-y|=1}\alpha_{\{x,y\}}.\]
\end{proposition}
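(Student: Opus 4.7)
I would start from the identity
\[G^A_\omega(0,0)=\frac{1}{p_{\mathrm{esc}}(\omega)},\qquad p_{\mathrm{esc}}(\omega):=P_{0,\omega}(\tau_{\partial A}<T_0^+),\]
where $\tau_{\partial A}$ is the first exit time from $A$ and $T_0^+$ the first return time to $0$, so the question reduces to understanding the power-law tail of $p_{\mathrm{esc}}$ near $0$ under $\P^{(\alpha)}$. Denote the candidate exponent by $\kappa:=\min_{\{x,y\}\subset A,\,|x-y|=1}\alpha_{\{x,y\}}$.

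\textbf{Divergence} ($s\geq\kappa$): Pick an edge $\{x^*,y^*\}$ realizing $\kappa$. When $0\in\{x^*,y^*\}$, monotonicity of the Green function under domain inclusion gives $G^A_\omega(0,0)\geq G^{\{x^*,y^*\}}_\omega(0,0)=(1-\omega_{(0,y^*)}\omega_{(y^*,0)})^{-1}$, and the pair computation preceding the proposition, together with the $\mathrm{Beta}$-description of the two marginals coming from the agglomeration property of Section~\ref{sec:dirichlet}, identifies the tail of this lower bound as $M^{-\alpha_{\{0,y^*\}}}=M^{-\kappa}$. When $0\notin\{x^*,y^*\}$, I would decompose the walk at the first hitting time of $x^*$: on the event $\{\tau_{x^*}<\tau_{\partial A}\}$ (which has all negative moments in $\omega$ because the Dirichlet coordinates along a fixed path from $0$ to $x^*$ are independent across vertices and have full support on the simplex), the walker enters the trap and undergoes a geometric number of oscillations of size $(1-\omega_{(x^*,y^*)}\omega_{(y^*,x^*)})^{-1}$ before returning to $0$ at least once; independence of the Dirichlet coordinates across distinct vertices then transfers the same Beta tail exponent to $G^A_\omega(0,0)$.

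\textbf{Convergence} ($s<\kappa$): Here I need a lower bound on $p_{\mathrm{esc}}$ that rules out \emph{all} possible traps simultaneously. The natural device is a max-flow/min-cut estimate inside $A$ with capacities $\omega_e$: any nonnegative unit flow from $0$ to $\partial A$ supported in $A$ is a pointwise lower bound for $p_{\mathrm{esc}}$, and by LP duality the binding obstruction is a connected vertex-subset $B\subseteq A$ containing $0$ with small total outgoing Dirichlet weight
\[\alpha_\partial(B):=\sum_{e\in E,\,\underline e\in B,\,\overline e\notin B}\alpha_e.\]
Combining this deterministic bound with the joint moment formula~\eqref{moments} and the restriction/agglomeration properties of Section~\ref{sec:dirichlet}, one obtains $\E^{(\alpha)}[p_{\mathrm{esc}}^{-s}]<\infty$ as soon as $s$ is strictly below the minimum of $\alpha_\partial(B)$ over connected subsets $B\ni 0$ of $A$ with $|B|\geq 2$. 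A combinatorial check then equates this minimum with $\kappa$ in the translation-invariant $\Z^d$ setting: writing $\alpha_\partial(B)=|B|S-\sum_j\alpha_j N_j(B)$ with $S=\sum_i\alpha_i$ and $N_j(B)$ the number of directed edges of $B$ in direction $e_j$, the elementary observation that a connected set of size $n$ can contain at most $n-1$ edges in any single direction yields $\alpha_\partial(B)\geq 2S-(\alpha_k+\alpha_{k+d})$ for any direction $k$ realised by an edge of $B$, with equality precisely when $|B|=2$.

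\textbf{Main obstacle.} The hard half is convergence: promoting the deterministic min-cut bound on $p_{\mathrm{esc}}$ into an integrability threshold for $p_{\mathrm{esc}}^{-s}$ requires a careful use of~\eqref{moments} to handle the within-vertex correlations between the $\omega_e$'s sharing a common tail (which an uncorrelated flow bound would ignore). This is the analytic core of Tournier's argument in~\cite{tournier2009integrability}; by comparison, the pair lower bound, the excursion reduction, and the isoperimetric identification of pairs as the minimising subsets are all relatively elementary.
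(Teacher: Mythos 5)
The step that fails is your divergence argument in the case $0\notin\{x^*,y^*\}$, and it cannot be repaired, because $G^A_\omega(0,0)$ does not feel traps that avoid the origin. Your excursion decomposition inflates the \emph{time} spent in $A$, not the number of \emph{returns to} $0$: on $\{\tau_{x^*}<\tau_{\partial A}\}$ the walk does oscillate enormously inside the pair, but when it finally leaves the pair it chooses its exit edge according to the remaining Dirichlet weights renormalized, and by the restriction property of Section~\ref{sec:dirichlet} these are independent of the trap strength $1-\omega_{(x^*,y^*)}\omega_{(y^*,x^*)}$; so, conditionally on a strong remote trap, $p_{\mathrm{esc}}=1/G^A_\omega(0,0)$ stays of order one and no heavy tail is transferred. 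A concrete check: take $d=2$, $\alpha_1=\alpha_3=10$, $\alpha_2=\alpha_4=1$, $A=\{0,e_2,e_2+e_1\}$; with $a=\omega_{(e_2,0)}$, $b=\omega_{(e_2,e_2+e_1)}$, $c=1-a-b$, $p=\omega_{(e_2+e_1,e_2)}$, one computes
\[G^A_\omega(0,0)=\frac{a+c+b(1-p)}{a\bigl(1-\omega_{(0,e_2)}\bigr)+c+b(1-p)},\]
and from the bound $G^A_\omega(0,0)\le 2+2\min\bigl\{(1-\omega_{(0,e_2)})^{-1},\,a/(c+b(1-p))\bigr\}$ one gets $\E^{(\alpha)}[G^A_\omega(0,0)^s]<\infty$ for every $s<\alpha_{\{0,e_2\}}=42$, whereas $\min_{x,y\in A,|x-y|=1}\alpha_{\{x,y\}}=\alpha_{\{e_2,e_2+e_1\}}=24$. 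So the divergence you aim at for $24\le s<42$ is not merely hard, it is false; the correct exponent only involves pairs (more generally, subsets) \emph{containing} $0$, exactly as in the general formulation of Tournier's theorem quoted just before the proposition, and the statement must be read that way (for~\eqref{eq:finite_traps} and the definition of $\kappa$ this makes no difference, since there one quantifies over all finite $A\ni0$). Your excursion argument would, however, be the right one for the quenched expected exit time $E_{0,\omega}[T_A]=\sum_{y\in A}G^A_\omega(0,y)$, which does pick up remote traps.

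Two further gaps in the convergence half. First, the deterministic input you invoke is unavailable: for a non-reversible chain a flow (or cut) with capacities $\omega_e$ does \emph{not} bound $p_{\mathrm{esc}}$ from below; with $A=\{0,v\}$ and $\omega_{(0,\partial)}=\omega_{(0,v)}=\omega_{(v,0)}=\omega_{(v,\partial)}=\frac12$ one has $p_{\mathrm{esc}}=\frac34$ while the max-flow/min-cut value is $1$. Thomson-type principles require reversibility, and the reduction ``$G^A_\omega(0,0)$ large forces some cut around $0$ to carry small $\omega$-weight, up to polynomial losses'' is precisely the deterministic lemma of~\cite{tournier2009integrability} that has to be imported; it is not off-the-shelf LP duality. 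Second, your isoperimetric step needs fixing and is itself revealing: the bound ``at most $n-1$ internal edges per direction'' only gives $\alpha_\partial(B)\ge\sum_i\alpha_i$; the correct count is per direction the number of maximal runs of $B$, each contributing its two end edges, which yields $\alpha_\partial(B)\ge 2\sum_i\alpha_i-\max_k(\alpha_k+\alpha_{k+d})$ over directions $k$ realized \emph{inside} $B\ni0$. Consequently what your convergence route can give is the threshold $\min_{B\ni0,\,B\subseteq A}\alpha_\partial(B)$, which, as the example above shows, may strictly exceed $\min_{x,y\in A}\alpha_{\{x,y\}}$: the two halves of your proposal do not meet at the stated constant. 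The sound parts are the identity $G^A_\omega(0,0)=1/p_{\mathrm{esc}}$, the pair case of the divergence half (monotonicity under domain inclusion plus the Beta computation of Section~\ref{traps}), and the general plan ``explicit pair bound plus subset/cut estimate'', which is indeed how the paper handles this proposition (the general case is not reproved there but cited from~\cite{tournier2009integrability}).
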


Therefore, the strongest finite traps in $\Z^d$ are actually pairs of vertices, and
\begin{equation}
	\Big(\E^{(\alpha)}\big[G_\omega^A(0,0)^s\big]<\infty\text{ for all finite } A\subset\Z^d\text{ containing $0$}\Big)\qquad\Leftrightarrow\qquad s<\kappa, \label{eq:finite_traps}
\end{equation}
where
\begin{equation}
	\fbox{$\displaystyle \kappa = \min_{1\le i\le d}\alpha_{\{0,e_i\}} = 2\sum_{i=1}^{2d}\alpha_i-\max_{1\le i\le d}(\alpha_i+\alpha_{i+d})$}.\label{eq:def_kappa}
\end{equation}

This parameter $\kappa$ plays a central role in forthcoming results. Let us first mention that, if $\kappa\le1$ then the expected exit time out of some pair $\{x,y\}$ under $P^{(\alpha)}_x$ is infinite, which quickly implies non-ballisticity (see~\cite[Proposition 12]{tournier2009integrability} for details):

\begin{proposition}\label{pro:traps}
	If $\kappa\le1$, then $P^{(\alpha)}_0$-a.s., $\displaystyle\lim_{n\to\infty}\frac{X_n}n=0$.
\end{proposition}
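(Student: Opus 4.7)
The strategy is to exhibit a finite trap of infinite annealed mean exit time and to show that a ballistic walk would have to cross order-$n$ such traps by time~$n$, contradicting the strong law for infinite-mean sums.

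By definition~\eqref{eq:def_kappa} of $\kappa$, choose $i_0\in\{1,\dots,d\}$ achieving $\alpha_{\{0,e_{i_0}\}}=\kappa\le 1$, and set $A=\{0,e_{i_0}\}$. The two-vertex computation preceding Proposition~\ref{thm:finite_trap} gives
\[G_\omega^A(0,0)=\frac{1}{1-\omega_{(0,e_{i_0})}\omega_{(e_{i_0},0)}},\]
and since $\kappa\le 1$ this random variable has $\E^{(\alpha)}[G_\omega^A(0,0)]=+\infty$. The quenched expected exit time from~$A$ starting at~$0$ is at least $G_\omega^A(0,0)$, whence $E_0^{(\alpha)}[\tau_A]=+\infty$, where $\tau_A$ denotes the exit time of~$A$. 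By translation invariance of $\P^{(\alpha)}$, the analogous statement holds for every pair $A_x=\{x,x+e_{i_0}\}$.

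I would conclude by contradiction. Suppose $P_0^{(\alpha)}(|X_n|/n\not\to 0)>0$. Then along a subsequence the walk visits at least $cn$ distinct vertices by time~$n$ for some (random) $c>0$, and after restricting to vertices whose $e_{i_0}$-coordinate is even, one extracts at least $\lfloor cn/2\rfloor$ vertex-disjoint traps $A_x$ that are visited by time~$n$. Because the Dirichlet environments on vertex-disjoint blocks are independent, a first-passage decomposition yields, for the successive first-time entries into these traps, conditionally i.i.d.\ sojourn times $\sigma_1,\sigma_2,\dots$ each distributed as $\tau_A$ (up to which vertex of the pair is entered first, both cases having infinite annealed mean). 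The total elapsed time then satisfies
\[n\;\ge\;\sigma_1+\sigma_2+\cdots+\sigma_{\lfloor cn/2\rfloor},\]
but by the strong law for i.i.d.\ nonnegative infinite-mean sums the right-hand side divided by~$n$ tends to $+\infty$ almost surely, which is absurd. Hence $P_0^{(\alpha)}$-almost surely $X_n/n\to 0$.

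The delicate point, and the main obstacle, lies in the conditional-independence step of the third paragraph: the set of indices $x$ for which the walk ever enters $A_x$ is random and depends on the environment at many other vertices, so one must set up a first-passage bookkeeping in which the environment inside a block is ``revealed'' only when the walk first enters it, and the walk is then coupled with an infinite supply of fresh i.i.d.\ Dirichlet blocks. This is precisely the construction carried out in~\cite[Proposition~12]{tournier2009integrability}.
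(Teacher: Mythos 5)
Your proposal is correct in outline and follows essentially the same route as the paper, which itself only observes that the pair $\{0,e_{i_0}\}$ has infinite annealed expected exit time when $\kappa\le1$ and refers to \cite[Proposition 12]{tournier2009integrability} for the implication of zero speed. Your sketch of that implication (vertex-disjoint fresh traps met along a putatively ballistic trajectory, conditionally infinite-mean sojourn times, infinite-mean law of large numbers) is the intended mechanism, and the revealment/coupling bookkeeping you flag as the delicate step is exactly what the cited reference supplies.
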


As we will see below (Theorem~\ref{ballisticity}), the assumption $\kappa\le1$ is sharp in dimension $d\ge3$, and this is also conjectured to be true in dimension 2. However, in the one-dimensional case, nontrivial traps of all sizes spontaneously appear (in the form of ``valleys'' of the potential) even for large values of the parameters, which leads to the definition of a different threshold $\kappa_1(=\alpha_1-\alpha_2)$, see Section~\ref{sec:1D}.

\section{RWDE on $\Z^d$, $d\ge 2$. Results involving the time reversal property}
\label{ss:results-reversal}
In this section we describe the consequences of the time reversal property described in the previous section.
This property has been successfully applied in three directions: directional transience (\cite{sabot-tournier2011, tournier2015direction}),
transience in dimension $d\ge 3$ (\cite{sabot2011transience}), invariant measure viewed from the particule (\cite{sabot2013particle,bouchet2013}).
These results have consequences on ballisticity conditions, in particular the question of equivalence between ballisticity and
directional transience, directional 0-1 law.
When it can be applied, this property in general provides optimal conditions, and gives information that are not accessible
for general environments. 

Sketches for the proofs of the results in this section are given in Section~\ref{sec:proofs}.

\subsection{Directional transience}
\label{sub:transience_dir}

The walk $(X_n)_n$ is said to be \emph{transient in direction $\ell$} if $X_n\cdot\ell\to+\infty$. 
The question of directional transience or recurrence is now completely understood in the case of Dirichlet environment.

Let us denote
\[
d_\alpha= \sum_{i=1}^{2d} \alpha_i e_i.
\]
In particular, $E^{(\alpha)}_0[X_1]=\frac1{\alpha_1+\cdots+\alpha_{2d}}d_\alpha$ is the drift of the mean environment. 

The main result is the following. It follows from~\cite{bouchet2013} and~\cite{tournier2015direction}. 

\begin{theorem}
\label{th:transience_dir}
Let $\ell\in \R^d\setminus\{0\}$.
\begin{enumerate}[(i)]
	\item If $d_\alpha \cdot \ell =0$, then
\begin{equation*}
	P^{(\alpha)}_0\text{-a.s.,}\qquad -\infty=\liminf_{n\to\infty} X_n\cdot \ell<\limsup_{n\to\infty} X_n\cdot \ell=+\infty. 
\end{equation*}
	\item If $d_\alpha \cdot \ell >0$ (resp. $d_\alpha \cdot \ell < 0$)
\begin{equation*}
	P^{(\alpha)}_0\text{-a.s.,}\qquad \lim_{n\to\infty}X_n\cdot \ell+\infty \;\;\; (\hbox{resp. $-\infty$}). 
\end{equation*}
\end{enumerate}
Moreover, if  $d_\alpha\ne 0$, $(X_n)_n$ has an asymptotic direction given by $d_\alpha$: 
\begin{equation*}
	P^{(\alpha)}_0\text{-a.s.,}\qquad \lim_{n\to\infty} \frac{X_n}{|X_n|}= \frac{d_\alpha}{|d_\alpha|}.\label{eq:direction}
\end{equation*}
\end{theorem}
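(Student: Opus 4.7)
My plan is to establish the three parts in the order (ii), (iii), (i), using Lemma~\ref{lem:reversal} and its corollary~(\ref{returnproba}) throughout.

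\textbf{Part (ii), when $d_\alpha\cdot\ell>0$.} I would follow the environment-viewed-from-the-particle approach of \cite{bouchet2013}. The objective is to build a probability measure $Q$ on $\Omega_{\Z^d}$ that is absolutely continuous with respect to $\P^{(\alpha)}$ and is invariant and ergodic under the Markov chain $\omega\mapsto\tau_{X_1}\omega$. Its density is obtained by integrating a stationary flow on $\Z^d$, itself a weak limit of explicit stationary flows on finite boxes. On each finite box Lemma~\ref{lem:reversal} and the Beta identity~(\ref{returnproba}) furnish the exact distributional identities needed to pass to the limit and to check integrability of the density. Once $Q$ is in hand, Birkhoff's ergodic theorem yields $X_n/n\to v$ a.s., and a short computation with the explicit density identifies $v$ as a positive multiple of $d_\alpha$; directional transience in direction $\ell$ for every $\ell$ with $d_\alpha\cdot\ell>0$ follows at once.

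\textbf{Part (iii).} Once (ii) is known, (iii) is a purely geometric consequence. Any subsequential limit $w$ of $X_n/|X_n|$ on the unit sphere must satisfy $\ell\cdot w\ge 0$ for every $\ell$ with $\ell\cdot d_\alpha>0$. Specializing to $\ell=d_\alpha+\varepsilon u$ for $u\perp d_\alpha$ and letting $\varepsilon\to 0^+$ gives $d_\alpha\cdot w\ge 0$; applying it to $\ell=d_\alpha\pm\varepsilon u$ forces $u\cdot w=0$ for every $u\perp d_\alpha$. Combined with $|w|=1$, this pins down $w=d_\alpha/|d_\alpha|$.

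\textbf{Part (i).} I would first establish the directional 0-1 law $P^{(\alpha)}_0(X_n\cdot\ell\to\pm\infty)\in\{0,1\}$. The argument applies Lemma~\ref{lem:reversal} to a finite strongly connected graph $G_L$ obtained by carving out the slab $\{0\le x\cdot\ell<L\}$ and gluing the two boundary hyperplanes by extra edges whose weights are tuned to enforce $\dive\alpha=0$ on $G_L$; then~(\ref{returnproba}) provides an explicit Beta distribution for the probability that the walk, once past a given hyperplane, ever comes back behind it, and a Borel--Cantelli argument on nested slabs forbids any strict intermediate probability. Together with the fact that $P^{(\alpha)}_0$-a.s. $|X_n|\to\infty$, the 0-1 law leaves only three mutually exclusive possibilities of probabilities summing to one: $X_n\cdot\ell\to+\infty$, $X_n\cdot\ell\to-\infty$, or the oscillating behavior of~(i). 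When $d_\alpha=0$, the annealed law is invariant under $X_n\mapsto -X_n$ (swap $\alpha_i\leftrightarrow\alpha_{i+d}$), so $X_n\cdot\ell\to+\infty$ a.s.\ would force $X_n\cdot\ell\to-\infty$ a.s.\ and conversely; both transience cases are excluded and oscillation remains. When $d_\alpha\ne 0$ and $d_\alpha\cdot\ell=0$, a finer perturbation argument in~$\alpha$ (using~(iii), which already forces $X_n\cdot\ell=o(|X_n|)$) shows that the transience direction must be continuous in $\alpha$ across the hyperplane $\{d_\alpha\cdot\ell=0\}$, again eliminating the two transience options.

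\textbf{Main obstacle.} The heart of the difficulty is the directional 0-1 law: for general i.i.d.\ environments in $d\ge3$ it remains a longstanding open problem, and it is precisely the time reversal mechanism of Lemma~\ref{lem:reversal} that unlocks it for Dirichlet. The construction of the invariant measure $Q$ in Step~1 is also delicate, since one must track integrability of its density near the boundary of the parameter region where the walk is ballistic.
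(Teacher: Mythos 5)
There is a genuine gap, and it sits at the heart of your part (ii). You construct the absolutely continuous invariant measure $\Q^{(\alpha)}$ for the environment viewed from the particle and then claim that ``a short computation with the explicit density identifies $v$ as a positive multiple of $d_\alpha$''. But that density is not explicit: it is only obtained as an $L^p$ limit of the periodized densities $f_N$ (Lemma~\ref{main-lemma}), and no computation of $\E_{\Q^{(\alpha)}}\big[\sum_i\omega_{(0,e_i)}e_i\big]$ is available. In the paper and in \cite{sabot2013particle,bouchet2013} the implication goes the other way round: the fact that $v\in\R_+^*\,d_\alpha$ is \emph{deduced} from Theorem~\ref{th:transience_dir} (see the remark after Theorem~\ref{ballisticity}), so your route is circular precisely at this step. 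Moreover, when $\kappa\le1$ there is no absolutely continuous invariant measure at all (Theorem~\ref{th:invariant}\,(ii)) and the speed is $0$ (Proposition~\ref{pro:traps}), so even with Bouchet's accelerated process a law of large numbers gives no directional information in that regime, while the statement (ii) still has to cover it. The first pillar of the actual proof, which your plan omits, is the quantitative time-reversal identity on a finite cylinder: gluing a slab of $\Z^d$ into a cylinder with one long edge makes $\diverg\alpha=0$, and reversing the cycles through the long edge yields $P^{(\alpha)}_{\mu_\ell}(\forall n,\ X_n\cdot\ell\ge0)=1-E^{(\alpha)}_0[(X_1\cdot\ell)_-]/E^{(\alpha)}_0[(X_1\cdot\ell)_+]>0$ (Corollary~\ref{cor:transience_dir}, Proposition~\ref{prop:transience_dir_generale}); without this lower bound neither the positivity of the probability of directional transience nor the direction of $v$ can be reached.

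The second gap is your part (i). The directional $0$--$1$ law cannot be obtained by a Borel--Cantelli argument on nested slabs: the return probabilities across successive hyperplanes are functions of overlapping pieces of the environment and of the same trajectory, hence far from independent, and knowing the Beta law of a single return probability on a finite glued slab (via~\eqref{returnproba}) gives no mechanism that upgrades a distributional identity to the a.s.\ dichotomy $P^{(\alpha)}_0(X_n\cdot\ell\to+\infty)\in\{0,1\}$. The easy half, $P^{(\alpha)}_0(\{X_n\cdot\ell\to+\infty\}\cup\{X_n\cdot\ell\to-\infty\})\in\{0,1\}$, is Kalikow's general $0$--$1$ law; the hard half is exactly what Bouchet proves for $d\ge3$ using the invariant measure for the accelerated environment viewed from the particle (Theorem~\ref{Elodie}), and what Zerner--Merkl provide for $d=2$, and your sketch replaces neither. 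Similarly, the ``perturbation argument in $\alpha$'' for the case $d_\alpha\ne0$, $d_\alpha\cdot\ell=0$ asserts a continuity in $\alpha$ of almost sure behavior that nothing in your argument establishes, and the input ``$|X_n|\to\infty$ a.s.'' is only known for $d\ge3$ (for $d=2$ with $d_\alpha=0$ recurrence/transience is open), so it cannot be used in dimension $2$. Your part (iii), by contrast, is essentially correct once (ii) is granted for a countable dense family of directions $\ell$ with $\ell\cdot d_\alpha>0$ (take $\varepsilon$ arbitrary, in fact large, rather than $\varepsilon\to0^+$, to force $u\cdot w=0$); this is in substance the argument of Simenhaus invoked in the paper.
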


\begin{remark}
Note that this result contains in particular the directional 0-1 law, which is still an open question for general RWRE in dimension $d\ge 3$
(the case $d=2$ was settled in \cite{zerner-merkl2001}).
\end{remark}

The proof of Theorem~\ref{th:transience_dir} contains two main parts. The first is a lower bound on the probability of directional transience (or actually on the probability of staying on one side of a hyperplane, see below), and the second is the 0-1 law mentioned just before. The 0-1 law is due to Bouchet~\cite{bouchet2013} (if $d\ge3$) and based on the results of Section~\ref{ss_ballistic}. Let us for the moment state the lower bound, which is actually an identity. 

Let $\ell$ be any direction in $\Q^d$ such that $\ell\cdot d_\alpha>0$. The discrete half-space 
\[\mathcal H^+_\ell=\{x\in\Z^d\,:\,x\cdot\ell\ge0\}\]
has a periodic boundary
\[\mathcal H^0_\ell=\{x\in\mathcal H^+_\ell\,:\,\exists i\in\{1,\ldots 2d\}\text{ such that } x+e_i\notin\mathcal H^+_\ell\}.\]
Let us introduce a probability distribution $\mu_\ell$ on some arbitrary period of $\mathcal H^0_\ell$ such that, for each point $x$, $\mu_\ell(x)$ is proportional to the sum of the weights of the edges entering~$\mathcal H^+_\ell$ at $x$. We may then consider the law $P^{(\alpha)}_{\mu_\ell}$ of the RWDE started at a point sampled according to $\mu_\ell$. 
This choice of $\mu_\ell$ introduces some stationarity and enables to obtain:

\begin{proposition}
	\label{prop:transience_dir_generale}
	Assume $\ell\in\Q^d$ and $\ell\cdot d_\alpha>0$. Then
	\begin{equation}
		P_{\mu_\ell}^{(\alpha)}\big(\forall n\ge0,\ X_n\cdot\ell\ge0\big)= 1-\frac{E^{(\alpha)}_0\big[(X_1\cdot\ell)_-\big]}{E^{(\alpha)}_0\big[(X_1\cdot\ell)_+\big]}>0.\label{eqn:inegalite_generale}
	\end{equation}
\end{proposition}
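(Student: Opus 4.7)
I would read the formula $p = 1-\frac{E^{(\alpha)}_0[(X_1\cdot\ell)_-]}{E^{(\alpha)}_0[(X_1\cdot\ell)_+]}$ as a Wiener--Hopf / Kac-type flux identity for the projected process $X_n\cdot\ell$: the probability of never visiting the negative half-space equals the ratio of the annealed $\ell$-drift $E^{(\alpha)}_0[X_1\cdot\ell]$ to the annealed upward $\ell$-jump $E^{(\alpha)}_0[(X_1\cdot\ell)_+]$. The specific weighting of $\mu_\ell$ by the incoming edge-weights at each boundary point is designed to turn successive visits of the walk to $\mathcal H^0_\ell$ into a stationary renewal structure, and I would organize the proof around this stationarity.

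Since $\ell\in\Q^d$, the sublattice $L=\ell^\perp\cap\Z^d$ has finite co-volume in $\mathcal H^0_\ell$; let $F$ be a fundamental domain, so that $\mu_\ell$ is a probability on the finite set~$F$, and translation-invariance of $\P^{(\alpha)}$ makes the walk $L$-equivariant. The main step is to introduce the substochastic Markov chain $(Y_k)_k$ on $F$ obtained from $(X_n)$ by recording, modulo $L$, the successive hits of $\mathcal H^0_\ell$, killed at the first exit into $\mathcal H^-_\ell$, and to show that $\mu_\ell$ is invariant for this chain with a common one-step killing probability $1-p$. Once this is established, a Kac-type flux calculation — comparing the $\mu_\ell$-weighted downward and upward one-step fluxes through $\mathcal H^0_\ell$, which after translation-invariance and cancellations reduce precisely to $E^{(\alpha)}_0[(X_1\cdot\ell)_\pm]$ — yields the claimed identity. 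Strict positivity $p>0$ then follows immediately from $d_\alpha\cdot\ell>0$, which forces $E^{(\alpha)}_0[(X_1\cdot\ell)_+]>E^{(\alpha)}_0[(X_1\cdot\ell)_-]$.

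The heart of the argument, and the main obstacle, is proving $\mu_\ell$-invariance of $(Y_k)_k$. The idea is that $\mu_\ell(x)$ is proportional to $\sum_{e:\bar e=x,\,\underline e\notin\mathcal H^+_\ell}\alpha_e$, and by Lemma~\ref{lem:reversal} this incoming-flux weighting is precisely the outgoing-flux weighting of the time-reversed environment; combining this with a cycle decomposition of paths between boundary visits (in the spirit of the proof of Lemma~\ref{lem:reversal}) gives the required detailed balance between entries into $\mathcal H^+_\ell$ at one $L$-class and entries at another. Executing this rigorously is delicate because the ambient graph is infinite, $(Y_k)_k$ is genuinely substochastic (the walk can also escape to infinity inside $\mathcal H^+_\ell$), and Lemma~\ref{lem:reversal} applies only on finite strongly-connected graphs with $\dive\alpha\equiv 0$. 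I would therefore truncate to the periodic slab $\{-N\le x\cdot\ell\le N\}/L$, adjoin fictitious edges to close the graph while preserving zero divergence of the weights, apply Lemma~\ref{lem:reversal} there to obtain the exact cycle identity, and let $N\to\infty$ by monotone convergence on the relevant events, using $d_\alpha\cdot\ell>0$ for uniform control of boundary fluxes. Matching the $\mu_\ell$ weighting in the truncation with the finite-graph calculation is where the bookkeeping work lies; once this is done, the flux computation in the previous paragraph is direct.
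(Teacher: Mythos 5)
Your skeleton rests on a renewal structure that does not exist under the annealed law, and this is a genuine gap rather than a bookkeeping issue. Under $P^{(\alpha)}_{\mu_\ell}$ the walk is a directed-edge reinforced walk (Lemma~\ref{lem:reinforcedrwre}), so the sequence $(Y_k)_k$ of successive visits to $\mathcal H^0_\ell$ (mod the period) is \emph{not} a Markov chain, and the ``one-step killing probability'' is not constant: already in $d=1$ with $\ell=e_1$, at the $k$-th visit to $0$ the annealed probability of stepping to $-1$ is $\beta/(\alpha+\beta+k-1)$ by the P\'olya-urn description, since every previous visit to $0$ has reinforced the edge $(0,1)$. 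Quenched, $(Y_k)$ is Markov but $\mu_\ell$ is certainly not invariant environment by environment, so the statement ``$\mu_\ell$ is invariant with common killing rate $1-p$'' has no formulation in which it is both true and usable; and without it the Kac-type flux balance $p=b/(a+b)$ cannot be run. The paper's proof avoids any per-visit renewal argument precisely for this reason: on a finite cylinder (periodized orthogonally to $\ell$, truncated along $\ell$, with extra vertices and a ``long edge'' making $\diverg\alpha=0$), the event of interest is realized by \emph{entire cycles} through the long edge, and Lemma~\ref{lem:reversal} converts its probability in one stroke into the first-step probability of the reversed walk. The measure $\mu_\ell$ then appears as an \emph{output} of the reversal (the reversed walk enters $\mathcal H^+_\ell$ through an entering edge with probability proportional to its weight), and the ratio of total exiting to entering edge weights per period is exactly $E^{(\alpha)}_0[(X_1\cdot\ell)_-]/E^{(\alpha)}_0[(X_1\cdot\ell)_+]$; this is the computation carried out for $\ell=e_1$ in Section~\ref{sub:proof_transience_dir}.

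The second gap is the passage to the limit. Your truncation-plus-monotone-convergence step, even done correctly, only yields the lower bound $P^{(\alpha)}_{\mu_\ell}(\forall n,\ X_n\cdot\ell\ge0)\ \ge\ 1-E^{(\alpha)}_0[(X_1\cdot\ell)_-]/E^{(\alpha)}_0[(X_1\cdot\ell)_+]$, which is all the finite-graph identity gives (this is exactly Corollary~\ref{cor:transience_dir} as proved in Section~\ref{sub:proof_transience_dir}). Equality requires showing that, in the limit, the time-reversed walk (weights $\check\alpha$, drift $-d_\alpha$) started on the far side hits the hyperplane almost surely --- compare the second proof of Proposition~\ref{lem:CL}, where in $d=1$ this is supplied by Solomon's a.s.\ transience of the $(\beta,\alpha)$-walk. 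In higher dimension this almost-sure statement is the nontrivial $0$--$1$-law ingredient that the paper explicitly flags (remark after Corollary~\ref{cor:transience_dir} and the discussion of Theorem~\ref{th:transience_dir}); it does not follow from $d_\alpha\cdot\ell>0$ together with ``uniform control of boundary fluxes.'' Your use of Lemma~\ref{lem:reversal} on a zero-divergence completion of a periodic slab is the right tool, but it should be aimed directly at the staying event via the cycle decomposition, and the equality needs the external a.s.\ directional transience input that your proposal does not identify.
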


Note that the right hand side is fully explicit since the law of $X_1$ is given by the initial weights. It is surprising that the above probability does only depend on the initial weights up to a constant factor. In particular (thinking of~\eqref{eq:large-weights}), we can check that the above probability is the same as for a simple random walk in the mean environment, by applying the same proof (Lemma~\ref{lem:reversal} obviously holds for this walk). 

This results stated above appears in~\cite{tournier2015direction}. It was however first given as a lower bound and in the case $\ell=e_1$ in~\cite{sabot-tournier2011}. 
When $\ell=e_1$ the previous statement takes a simpler form :

\begin{corollary}
\label{cor:transience_dir}
Assume $\alpha_1>\alpha_{1+d}$. Then
\begin{align}
P^{(\alpha)}_0\big(\forall n\ge0,\ X_n\cdot \vec e_1\ge0\big)
	& = 1-\frac{\alpha_{1+d}}{\alpha_1}.
\label{eqn:inegalite}
\end{align}
\end{corollary}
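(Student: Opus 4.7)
The plan is to specialize Proposition~\ref{prop:transience_dir_generale} to $\ell=e_1$, which is allowed since the hypothesis $\alpha_1>\alpha_{1+d}$ is exactly $e_1\cdot d_\alpha>0$ (recall $d_\alpha=\sum_i\alpha_i e_i$ and $e_{i+d}=-e_i$). Everything then reduces to identifying $\mu_{e_1}$ and computing two elementary expectations.

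First I would describe the boundary: $\mathcal H^+_{e_1}=\{x\in\Z^d:x_1\ge 0\}$, and a point $x\in\mathcal H^+_{e_1}$ lies in $\mathcal H^0_{e_1}$ iff some $x+e_i$ has negative first coordinate, which forces $x_1=0$ and $e_i=-e_1$. So $\mathcal H^0_{e_1}=\{x\in\Z^d:x_1=0\}$. At any such point $x$, the only edge that enters $\mathcal H^+_{e_1}$ at $x$ is $(x-e_1,x)$, which has weight $\alpha_1$. Since the group of lattice translations preserving $e_1$ is generated by $e_2,\ldots,e_d$, a fundamental period of $\mathcal H^0_{e_1}$ is a single point, which I take to be $\{0\}$. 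Consequently $\mu_{e_1}=\delta_0$, and $P^{(\alpha)}_{\mu_{e_1}}=P^{(\alpha)}_0$.

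Next I would compute the right-hand side of~(\ref{eqn:inegalite_generale}). Under $P^{(\alpha)}_0$, the first step $X_1$ is a unit vector, so $X_1\cdot e_1\in\{-1,0,1\}$ with values $+1$ iff the walk first jumps to $e_1$ and $-1$ iff it first jumps to $-e_1$. Hence
\[
E^{(\alpha)}_0\bigl[(X_1\cdot e_1)_+\bigr]=\E^{(\alpha)}[\omega_{(0,e_1)}]=\frac{\alpha_1}{\alpha_1+\cdots+\alpha_{2d}},
\]
by the marginal Beta law coming from the agglomeration property (Property~\ref{pro:properties}), and likewise
\[
E^{(\alpha)}_0\bigl[(X_1\cdot e_1)_-\bigr]=\frac{\alpha_{1+d}}{\alpha_1+\cdots+\alpha_{2d}}.
\]
The ratio simplifies to $\alpha_{1+d}/\alpha_1$, and Proposition~\ref{prop:transience_dir_generale} gives exactly the claimed identity~(\ref{eqn:inegalite}).

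There is essentially no obstacle here: the entire content of the corollary is hidden in Proposition~\ref{prop:transience_dir_generale}, and the only small point requiring care is the verification that $\mu_{e_1}$ may be chosen to be $\delta_0$, i.e.\ that the fundamental period reduces to a single vertex when $\ell$ is a coordinate vector. All other steps are direct algebra on the Dirichlet marginals.
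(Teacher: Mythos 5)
Your proposal is correct and follows the paper's own route: the paper obtains Corollary~\ref{cor:transience_dir} precisely by specializing Proposition~\ref{prop:transience_dir_generale} to $\ell=e_1$, and your verification that $\mathcal H^0_{e_1}=\{x: x\cdot e_1=0\}$ has a one-point period so that $\mu_{e_1}=\delta_0$, together with the computation $E^{(\alpha)}_0[(X_1\cdot e_1)_\pm]=\alpha_1/\alpha_0$, resp.\ $\alpha_{1+d}/\alpha_0$, is exactly the missing bookkeeping. (Note that the separate argument in Section~\ref{sub:proof_transience_dir} via the reversal lemma on cylinders only yields the lower bound, so resting the equality on the Proposition, as you do, is indeed how the paper proceeds.)
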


A short proof of the lower bound of the corollary is given in Section~\ref{sub:proof_transience_dir}, where the reversal lemma plays a key role.

\subsection{Transience in dimension $d\ge 3$}

Transience is a translation invariant property of the environment, and as such, due to ergodicity, it holds for almost every or almost no environment. For Dirichlet environments, when $d_\alpha\ne 0$, transience follows from directional transience. The following theorem by Sabot~\cite{sabot2011transience}, which again makes crucial use of Lemma~\ref{lem:reversal}, covers in particular the remaining case $d_\alpha=0$, under the condition that $d\ge3$.  (In fact, ii) was not proved in \cite{sabot2011transience} but is based on the same principle, and proved in Section \ref{sec:proofs}.)

\begin{theorem}\label{th:transience}\ 
	\begin{enumerate}[(i)]
	\item Assume $d\ge3$. For any $\alpha_1,\ldots,\alpha_{2d}$, 
		\[P^{(\alpha)}_0\text{-a.s.,}\qquad \lim_{n\to\infty}|X_n|=+\infty.\] 
	More precisely, if $G_\omega(0,0)=\sum_{n=0}^\infty P_{0,\omega}(X_n=0)$ denotes the Green function in environment $\omega$, then
	\[\E^{(\alpha)}[G_\omega(0,0)^s]<\infty\qquad\Leftrightarrow\qquad s<\kappa.\]
\item Assume $d=2$. For any $\alpha_1,\alpha_2,\alpha_3,\alpha_4$, for any $\eps>0$, there is a constant $C$ such that for all $N\ge2$, if $H_N$ denotes the hitting time of $N$ by $(|X_k|)_k$, and $H_0^+$ the first return time to $0$, then
	\[P^{(\alpha)}_0(H_N< H_0^+)\ge\frac C{(\ln N)^{\frac12+\eps}}.\]
	\end{enumerate}
\end{theorem}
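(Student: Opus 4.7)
The plan is to work on finite exhaustions of $\Z^d$ that satisfy the divergence-free condition of Lemma~\ref{lem:reversal}. For $N\ge1$, let $\hat B_N$ be the finite directed graph obtained from $B_N=[-N,N]^d\cap\Z^d$ by identifying all vertices of $\Z^d\setminus B_N$ into a single cemetery~$\delta$, retaining every edge of~$\Z^d$ with at least one endpoint in~$B_N$ with its original weight~$\alpha_i$. By translation invariance of the weights, incoming and outgoing total weights coincide at every vertex of~$\hat B_N$, including~$\delta$, so $\dive(\alpha)\equiv 0$ on $\hat B_N$ and Lemma~\ref{lem:reversal} yields $\check\omega\sim\P^{(\check\alpha)}$. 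Writing $H_\delta$ for the hitting time of~$\delta$ and $H_0^+$ for the first return time to~$0$, the killed Green function in~$B_N$ reads
\begin{equation*}
G_{B_N}(0,0)=\frac{1}{P_{0,\omega}(H_\delta<H_0^+)},
\end{equation*}
and $G_{B_N}(0,0)\uparrow G_\omega(0,0)$ monotonically as $N\to\infty$. By Fatou, the moment bound in~(i) reduces to a uniform estimate $\sup_N\E^{(\alpha)}\big[P_{0,\omega}(H_\delta<H_0^+)^{-s}\big]<\infty$ for $s<\kappa$.

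The cycle argument that establishes~\eqref{returnproba} adapts directly: cycles from~$0$ to~$0$ in $\hat B_N$ avoiding~$\delta$ and ending with edge $(y,0)$ are in reversal-bijection with paths in~$\check{\hat B}_N$ starting at~$0$ with edge~$(0,y)$, avoiding~$\delta$, and hitting~$0$ only at the end; by~(\ref{checkw}) and Markov under $\check\omega$ this yields
\begin{equation*}
P_{0,\omega}\big(H_0^+<H_\delta,\ X_{H_0^+}=y\big)=\check\omega_{(0,y)}\,P_{y,\check\omega}(H_0<H_\delta),
\end{equation*}
and summing over neighbors $y$ of~$0$ gives
\begin{equation*}
P_{0,\omega}(H_\delta<H_0^+)=\sum_{y\sim 0}\check\omega_{(0,y)}\,P_{y,\check\omega}(H_\delta<H_0).
\end{equation*}
Under $\P^{(\alpha)}$, the marginals $\check\omega_{(0,y)}$ are $\Beta(\alpha_{(0,y)},\alpha_0-\alpha_{(0,y)})$-distributed by Property~\ref{pro:properties}, and their negative-moment tail exponents produce exactly the threshold~$\kappa$ from~\eqref{eq:def_kappa}. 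In dimension $d\ge 3$, the reversed walk on~$\hat B_N$ inherits the transience of its ambient mean drift walk: starting from $y\sim 0$, the probability $P_{y,\check\omega}(H_\delta<H_0)$ stays bounded below by a positive $\check\omega$-measurable quantity whose inverse moments of order $<\kappa$ are controlled uniformly in~$N$ by a trap-type analysis in the spirit of Proposition~\ref{thm:finite_trap}. Combining with the $\Beta$ marginals gives the required uniform bound and hence~(i). The matching lower bound $\E^{(\alpha)}[G_\omega(0,0)^s]=\infty$ for $s\ge\kappa$ follows from $G_\omega(0,0)\ge G^{\{0,e_{i^*}\}}_\omega(0,0)$ together with Proposition~\ref{thm:finite_trap}, where $i^*$ achieves the minimum in~\eqref{eq:def_kappa}. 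Transience $|X_n|\to\infty$ is then a consequence of $G_\omega(0,0)<\infty$ a.s., which holds since $\kappa>0$.

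For part~(ii) the same construction is used in~$\Z^2$. There $P_0^{(\alpha)}(H_N<H_0^+)$ is bounded below, up to a constant, by $\E^{(\alpha)}[P_{0,\omega}(H_\delta<H_0^+)]$, which the reversal identity above reduces to expectations of $\check\omega_{(0,y)}P_{y,\check\omega}(H_\delta<H_0)$. Planar recurrence forces the quenched escape probability $P_{y,\check\omega}(H_\delta<H_0)$ to be only of order $1/\ln N$, but a moment estimate exploiting the $\Beta$ marginals of $\check\omega_{(0,y)}$ (which have positive mass arbitrarily close to~$0$) improves this decay, losing only a factor $(\ln N)^{\eps}$ after averaging in~$\omega$ and yielding the claimed bound $P_0^{(\alpha)}(H_N<H_0^+)\ge C(\ln N)^{-1/2-\eps}$. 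The main obstacle in both parts is the control of $P_{y,\check\omega}(H_\delta<H_0)$: in $d\ge 3$ a uniform positive lower bound is needed, whereas in $d=2$ one must capture the sharp logarithmic decay. This is where the distinction between the two regimes materializes, and where the time reversal lemma provides the crucial leverage by replacing a delicate backward escape question by forward questions under the tractable law $\dir(\check\alpha)$.
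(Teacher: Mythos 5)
Your setup (the divergence-free cemetery graph $\hat B_N$, the application of Lemma~\ref{lem:reversal}, and the cycle-reversal identity $P_{0,\omega}(H_\delta<H_0^+)=\sum_{y\sim 0}\check\omega_{(0,y)}P_{y,\check\omega}(H_\delta<H_0)$, modulo the typo $X_{H_0^+}=y$ for $X_{H_0^+-1}=y$) is fine, but the proof then stops exactly where the real difficulty begins. The factor $P_{y,\check\omega}(H_\delta<H_0)$ is an escape probability from a ball of radius $N$ for a walk in the environment $\check\omega\sim\P^{(\check\alpha)}$, i.e.\ for another RWDE of the same type (the reversed weights are just the reflected $\alpha_i$'s). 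Asserting that it ``stays bounded below uniformly in $N$ by a trap-type analysis in the spirit of Proposition~\ref{thm:finite_trap}'' is circular: that uniform lower bound is equivalent to the transience you are trying to prove, and Proposition~\ref{thm:finite_trap} only controls Green functions on \emph{fixed finite} subsets, not escape from balls of diverging radius. Nothing in your argument encodes the dimension, whereas the dichotomy $d\ge3$ versus $d=2$ must enter somewhere quantitatively. In the paper it enters through an entirely different mechanism: one adds a special edge $(\partial,0)$ of weight $1$ (or $\gamma_N$) and a corrective unit flow $\theta$ from $0$ to $\partial$ so that $\alpha+\theta$ is divergence-free; then the reversed return event becomes the \emph{one-step} event $\{X_1=\partial\}$, whose probability is exactly $\mathrm{Beta}(1,\alpha_0)$ with no residual escape probability to control; finally one transfers from $\P^{(\alpha+\theta)}$ back to $\P^{(\alpha)}$ by absolute continuity and H\"older, at a cost $\exp(C_r\sum_e\theta_e^2)=\exp(C_rR_N)$, and Thomson's principle identifies $\sum_e\theta_e^2$ with the electrical resistance $R_N$, bounded in $d\ge3$ and of order $\ln N$ in $d=2$. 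That resistance bound is the sole point where the dimension acts, and your proposal has no substitute for it.

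The same gap invalidates your part (ii): if $P_{y,\check\omega}(H_\delta<H_0)$ were of order $1/\ln N$, multiplying by $\check\omega_{(0,y)}\le1$ and averaging cannot produce a bound of the larger order $(\ln N)^{-1/2-\eps}$; the improvement must come from the environment making the escape probability atypically large, which is again the statement to be proved, and ``positive mass of the Beta marginals near $0$'' pushes in the wrong direction. In the paper the $2$-dimensional estimate comes from tuning the special-edge weight $\gamma_N=(\ln N)^{-1/2}$: the tilted annealed escape probability is exactly $\gamma_N/(\gamma_N+\alpha_0)$ and the H\"older cost is $\exp(C_r\gamma_N^2\ln N)$, which balances to give $(\ln N)^{-1/2-\eps}$. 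Finally, your claim that the Beta marginals of $\check\omega_{(0,\cdot)}$ ``produce exactly the threshold $\kappa$'' for the Green function moments is unsubstantiated: the naive exponent obtained this way is not $\kappa=2\sum_i\alpha_i-\max_i(\alpha_i+\alpha_{i+d})$; reaching $\kappa$ requires the max-flow min-cut optimization of Section~\ref{sub:optimization} (the sketch in the paper only yields $\tilde\kappa=\min_i\alpha_i$ without it). Only your lower-bound direction, $\E^{(\alpha)}[G_\omega(0,0)^s]=\infty$ for $s\ge\kappa$ via the two-point trap and Proposition~\ref{thm:finite_trap}, is correct as stated.
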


Remind that in dimension 2, for the (recurrent) simple symmetric random walk, the last probability is on the order of $(\ln N)^{-1}$, hence 2D annealed RWDE are ``strictly less recurrent'' than the symmetric simple random walk; this is in contrast with the identity of Proposition~\ref{prop:transience_dir_generale} which remains true for the simple random walk in the mean environment.
Let us stress that recurrence in dimension 2 with $d_\alpha=0$ remains open. 

A full proof of the first statement is provided in Section~\ref{sub:proof_transience}. It is worth noticing that it applies as well to transient Cayley graphs with translation invariant weights: if a group $G$ is generated by $S=\{e_1,\ldots,e_d\}$ and this set $S$ is stable by inversion, then we may consider its Cayley graph and endow it with weights $\alpha_{(g,ge_i)}=\alpha_i$ for any $g\in G$ and $1\le i\le d$, where $\alpha_1,\ldots\alpha_d$ are given parameters; under these assumptions, and transience of the simple random walk on $G$, this RWDE on $G$ is transient. \cite{sabot2011transience} gives a more general version, where less symmetry of the graph and weights is required.

Comparing with~\eqref{eq:finite_traps}, the second statement of the theorem suggests that there is no ``infinite trap'', in the sense that the integrability condition for $G_\omega(0,0)$ is the same for $\Z^d$ and for finite subsets of it. 
An analogy with the 1-dimensional case (see Section~\ref{sec:1D}) also suggests that $\kappa$ governs the order of fluctuations of $(X_n)_n$, a fact later confirmed in~\cite{sabot2013particle} and~\cite{bouchet2013}, cf.~next section.

\subsection{The invariant measure for the environment viewed from the particle}
\label{ss_ballistic}
For $x\in\Z^d$, denote by $\tau_x$ the shift on the environment,
defined by
$$
\tau_x\w (y,z)=\w(x+y,x+z), \;\;\;\forall \omega\in\Omega,\ \forall y,z\in\Z^d.
$$
Given the random walk $(X_n)_n$ in environment $\w$, the process of the environment
viewed from the particle is the process on the state space $\W$
defined by
$$
\overline \w_n=\tau_{X_n}\w, \;\;\;\forall n\in\N. 
$$
Under $P^{\w}_0$, $\w\in \W$, (resp.~under the annealed law $P_0$)
$\overline\w_n$ is a Markov process on state space $\W$ with
generator $R$ given by
$$
Rf(\w)=\sum_{i=1}^{2d} \w_{e_i} (f(\tau_{e_i} \w)-f(\w)),
$$
for all bounded measurable function $f$ on $\W$, and with initial
distribution $\delta_{\w}$ (resp.~$\P$), cf.~e.g.~\cite{sznitman-ten}. 
The advantage of this point of view is that the process $\overline\w_n$ contains more information and 
is a Markov process, even under the annealed law $P_0$.
One key ingredient to be able to apply this technique is the so-called
equivalence between static and dynamic point of view: it corresponds to the existence of an invariant measure
for the process of the environment viewed from the particle which is absolutely continuous with respect to the
initial law of the environment.

The point of view of the particle has been the central tool in the analysis of random walks in random conductances. 
It has had yet a little impact in the general non-reversible case, since it is still out of reach for general environments to prove
the equivalence of static and dynamic points of view.  It has been done only in few cases, the balanced case (\cite{Lawler-82,sznitman-ten}), 
the Dirichlet case in dimension $d\ge 3$, and under a ballisticity condition for general environments
in dimension $d\ge 4$, \cite{berger2014}. The Dirichlet case is the object of the  
the following theorem from \cite{sabot2013particle}.
\begin{theorem}\label{th:invariant}
Let $d\ge 3$ and $\P^{(\alpha)}$ be the law of the Dirichlet
environment with weights $(\alpha_1, \ldots, \alpha_{2d})$. Let
$\kappa>0$ be defined, as in~\eqref{eq:def_kappa}, by
$$
\kappa= 2\left(\sum_{i=1}^{2d} \alpha_i\right) -\max_{i=1, \ldots
, d} (\alpha_{i}+\alpha_{i+d}).
$$

(i) If $\kappa>1$ then there exists a unique probability
distribution $\Q^{(\alpha)}$ on $\W$ absolutely continuous with
respect to $\P^{(\alpha)}$ and invariant by the generator $R$.
Moreover ${\d\Q^{(\alpha)}\over \d\P^{(\alpha)}}$ is in $L^p(\P^{(\alpha)})$ for
all $1\le p<\kappa$.

(ii) If $\kappa\le 1$, there does not exist any probability
measure invariant by $R$ and absolutely continuous with respect to
the measure $\P^{(\alpha)}$.
\end{theorem}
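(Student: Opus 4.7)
My plan is to build $\Q^{(\alpha)}$ of part (i) by a periodic approximation on tori, exploiting Lemma~\ref{lem:reversal} in finite volume, and to rule out its existence in part (ii) by confronting the heavy tails of finite-trap exit times with an $L^1$ density. The key technical point will be a uniform $L^p$ bound on the finite-volume densities.

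\textbf{Construction in finite volume.} On the torus $T_N=(\Z/N\Z)^d$ endowed with translation-invariant weights $(\alpha_i)$, each vertex has incoming and outgoing total weight $\sum_i\alpha_i$, so $\dive(\alpha)\equiv 0$ and Lemma~\ref{lem:reversal} applies: reversing time preserves the Dirichlet law (up to the obvious permutation of directions). For each environment $\omega$ on $T_N$, the quenched chain admits a unique invariant probability $\pi_\omega$, and I set
\[
\frac{d\Q_N}{d\P^{(\alpha)}_N}(\omega):=|T_N|\,\pi_\omega(0).
\]
Translation invariance of $\P^{(\alpha)}_N$ makes $\Q_N$ manifestly invariant under the torus analogue of $R$: integration against a bounded $g$ gives $\int g\,d\Q_N=\E^{(\alpha)}_N\bigl[\sum_{x\in T_N}\pi_\omega(x)\,g(\tau_x\omega)\bigr]$, which is unchanged after one step of the joint chain on $(\omega,x)$.

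\textbf{Uniform moment bound --- the main obstacle.} The crucial step, and the hardest one, is
\[
\sup_N\E^{(\alpha)}_N\Bigl[\bigl(|T_N|\pi_\omega(0)\bigr)^p\Bigr]<\infty\qquad\text{for every }1\le p<\kappa.
\]
Writing $|T_N|\pi_\omega(0)$ as a Green-function object (via the return-time formula $\pi_\omega(0)=1/E_0^\omega[\sigma_0^+]$ and a reversal of the chain), one may re-express it in terms of $G_\omega^{T_N}(0,0)$ for a Dirichlet environment on $T_N$ obtained from $\omega$ through Lemma~\ref{lem:reversal}. The moment bound then reduces to the $L^p$-integrability of the Green function, which is established exactly as in Theorem~\ref{th:transience}(i) by combining the local finite-trap integrability of Proposition~\ref{thm:finite_trap} with the transience argument; since both ingredients are local in nature, they give estimates uniform in~$N$. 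Once this bound is in hand, $(\Q_N)_N$ is tight when pushed to $\W$, any subsequential weak limit $\Q^{(\alpha)}$ satisfies $d\Q^{(\alpha)}/d\P^{(\alpha)}\in L^p$ for all $p<\kappa$ by Fatou, and $R$-invariance passes to the limit against cylinder test functions. Uniqueness follows from ergodicity of $\P^{(\alpha)}$ under the translations $\tau_x$ (as a product measure): two absolutely continuous $R$-invariant probabilities induce the same ergodic stationary chain and must therefore coincide.

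\textbf{Non-existence when $\kappa\le 1$.} Suppose for contradiction that $\Q=f\,\P^{(\alpha)}$ is $R$-invariant with $f\in L^1(\P^{(\alpha)})$. Consider the quenched mean exit time from the pair trap $\{0,e_1\}$, namely $\varphi(\omega)=E^\omega_0[\tau_{\{0,e_1\}^c}]$. The computation leading to \eqref{eq:finite_traps} together with Proposition~\ref{thm:finite_trap} yields $\E^{(\alpha)}[\varphi]=+\infty$ whenever $\kappa\le 1$. On the other hand, under $\Q$ the environment-seen-from-the-particle chain $(\bar\omega_n)$ is stationary; Kac's lemma identifies the expected return time to any cylinder event $C$ of positive $\Q$-measure with $1/\Q(C)<\infty$. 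Choosing $C$ so as to encode that the walk is about to enter and remain in the trap $\{0,e_1\}$ for a while turns this finiteness into $\E^{\Q}[\varphi]<\infty$; by ergodicity $\{f>0\}$ has full $\P^{(\alpha)}$-measure, so this forces $\E^{(\alpha)}[\varphi]<\infty$, the desired contradiction. Making the intermediate Kac-type inequality precise is the only delicate point of (ii); once formulated, it gives the non-existence immediately.
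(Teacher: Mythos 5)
Your finite-volume construction ($f_N=N^d\pi^\omega_N(0)$, $\Q_N=f_N\cdot\P^{(\alpha)}_N$, tightness, Fatou, uniqueness via ergodicity) is indeed the paper's route, but the step you label "the main obstacle" is not merely hard to carry out as you suggest --- the proposed reduction does not exist. The quantity $N^d\pi^\omega_N(0)$ is not a Green function: $G^{T_N}_\omega(0,0)=+\infty$ since the chain on the torus is recurrent, and the identity $\pi^\omega_N(0)=1/E^\omega_0[\sigma_0^+]$ relates it to an expected return time, whose lower tail is not controlled by the killed Green functions of Proposition~\ref{thm:finite_trap} or Theorem~\ref{th:transience}(i); in particular nothing in your sketch explains where $d\ge3$ or the exponent $\kappa$ would enter. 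The paper's proof of the uniform bound (Lemma~\ref{main-lemma}, Section~\ref{ss_invariant_proof}) proceeds quite differently: one bounds $f_N^p$ by the arithmetico-geometric inequality $f_N^p\le\prod_y(\pi(0)/\pi(y))^{p/N^d}$, converts the product of ratios into $\check\w^{p\check\theta}/\w^{p\theta}=\pi^{p\diverg\theta}$ for a flow $\theta$ averaging unit flows from $0$ to every $y\in T_N$, applies Lemma~\ref{lem:reversal} on the torus together with H\"older and second-order Gamma estimates, and uses the fact that in $d\ge3$ these flows can be chosen with $L^2$ norm bounded uniformly in $N$ (bounded torus resistances --- this is exactly where the dimension restriction enters); reaching the optimal threshold $\kappa$ rather than $\tilde\kappa=\min_i\alpha_i$ further requires the max-flow min-cut optimization of Section~\ref{sub:optimization}. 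None of this is "local" or inherited from the transience proof in the way you assert, so part (i) of your proposal has a genuine gap at its core.

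Part (ii) also fails as written, at two points. First, Kac's lemma applies to subsets of the state space of the environment chain, i.e.\ to events in $\omega$ alone, whereas "the walk is about to enter and remain in the trap for a while" is an event about the future trajectory; and even for a genuine environment event $C$, finiteness of the expected return time $1/\Q(C)$ gives no control on the $\Q$-expectation of the quenched exit time $\varphi$. Second, and more fundamentally, your concluding inference is a non sequitur: $\E^{\Q}[\varphi]=\E^{(\alpha)}[f\varphi]<\infty$ together with $f>0$ a.e.\ does not imply $\E^{(\alpha)}[\varphi]<\infty$, since $f$ could be small precisely on strongly trapping configurations. In fact the heuristic stated in the paper goes the other way: when $\kappa\le1$ the stationary walk spends most of its time on atypical, strongly trapping environments, so the real proof must show that invariance forces the density to be \emph{large} (indeed non-integrable) on such configurations, which is the opposite of what your chain of inequalities could deliver.
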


In the case (ii), the non existence of an absolutely continuous invariant measure viewed from the particle is due
to the presence of traps of finite size (cf.~Section~\ref{traps}). Indeed, the expected exit time of the RWDE in finite
size traps described in Section~\ref{traps} is infinite for $\kappa\le 1$. This implies that the RWDE spends most of its time
on configurations where the trapping effect is strong, hence on atypical configurations.

Theorem \ref{th:invariant} has consequences on the equivalence between directional transience and ballisticity 
and on the question of directional transience (cf.~Section~\ref{sub:transience_dir}).
To get a full picture it is important to deal with the case $\kappa\le 1$.
In \cite{bouchet2013}, E. Bouchet extended the previous result to the case $\kappa\le 1$, by considering an accelerated process.
The process is accelerated through a function that takes into account the local configuration: the accelerated process
goes faster when the local environment is strongly trapping.
For this accelerated process, it is possible to prove the existence of an invariant measure viewed from the particle. 

The strategy is the following: we fix a finite connected subset $\Lambda\subset \Z^d$ containing 0,
then at each vertex $x$, we define an accelerating function that will ``kill'' all traps strictly contained in the box $x+\Lambda$.
More precisely, recalling the notation (\ref{wsigma}),
the accelerating function is:
\begin{equation}
\label{gamma_omega}
\gamma ^\omega (x) = \frac{1}{\sum \omega _\sigma} , 
\end{equation}
where the sum is on all $\sigma$ finite simple directed paths from $x$ to 
$x + \Lambda$, (i.e.\ each vertex is visited at most once, and the path is stopped just after exiting $x+\Lambda$). 
Let $Z_t$ be the continuous-time Markov chain whose jump rate from $x$ to $y$ is $ \gamma ^\omega (x)  \omega (x,y) $, with $Z_0 = 0$. 
Then $Z_t$ is an accelerated version of the discrete time process since it jumps faster when $X_n$ is at a point $x$
such that $\gamma_x$ is small, i.e.\ when it is difficult to exit the box $x+\Lambda$. The environment viewed from the position of
$Z_t$, $\tau_{Z_t}\w$, has generator
\[ R^\Lambda f (\omega) = \sum _{i=1} ^{2d} \gamma ^\omega (0) \omega (0, e_i) \left( f(\tau _{e_i} \omega ) - f(\omega) \right) ,\]

\begin{theorem}\label{Elodie}

Let $d \geq 3$ and $ \P ^{(\alpha)} $ be the law of the Dirichlet environment for the weights $ (\alpha _1, \dots , \alpha_{2d}) $. Let $\kappa ^\Lambda > 0$ be defined by
\[ \kappa ^\Lambda = \min \Big\{ \sum _{e \in \partial_+(K)}  \alpha _e \,:\, K \text{ connected set of vertices},\, 0 \in K \text{ and } \partial \Lambda \cap K \neq \emptyset \Big\},
\footnote{cf.\ the illustration on picture \ref{pic:lambda}} \]
where  $\partial_+(K) = \{ e \in E\,:\, \underline{e} \in K , \; \overline{e} \notin K \}$ and $ \partial \Lambda = \{ x \in \Lambda\,:\, \exists y \sim x \text{ such that } y \notin \Lambda \} $.
If $\kappa ^\Lambda > 1$, there exists a unique probability measure $ \Q ^{(\alpha)} $ on $ \Omega $ that is absolutely continuous with respect to $ \P ^{(\alpha)} $ and invariant for the generator $R^\Lambda$. Furthermore, $ \frac{ \d \Q^{(\alpha)} }{\d \P^{(\alpha)}} $ is in $ L^p (\P ^{(\alpha)}) $ for all $ 1 \leq p < \kappa ^\Lambda $.

\end{theorem}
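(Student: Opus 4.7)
The plan is to adapt the strategy used in the proof of Theorem~\ref{th:invariant} to the accelerated generator $R^\Lambda$. On any finite strongly connected graph with environment $\omega$, the continuous-time jump chain with rates $q^\omega(x,y)=\gamma^\omega(x)\omega_{(x,y)}$ admits the invariant probability proportional to $\pi^\omega(x)/\gamma^\omega(x)$, where $\pi^\omega$ is the invariant probability of the discrete chain $\omega$, since rescaling time by $\gamma^\omega$ exactly inverts the acceleration. Consequently, on a finite periodic approximation $G_N=(\Z/N\Z)^d$ of $\Z^d$, the invariant measure of the environment viewed from the accelerated particle has density
\[
f_N(\omega) = \frac{1}{Z_N}\cdot\frac{\pi^\omega_N(0)}{\gamma^\omega(0)}
\]
with respect to the Dirichlet law $\P_N^{(\alpha)}$ on $G_N$; the translation invariance of the weights ensures $\dive(\alpha)\equiv 0$, so Lemma~\ref{lem:reversal} is available on $G_N$.

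The key analytic step is to obtain a uniform-in-$N$ bound $\E_N^{(\alpha)}[f_N^{\,p}]<C$ for every $p<\kappa^\Lambda$. Following Sabot's approach to Theorem~\ref{th:invariant}, one rewrites $\pi^\omega_N(0)$ via the finite-volume Green function $G^\omega_N(0,0)$ and decomposes return excursions to $0$ according to the first edge cut $\partial_+(K)$ (with $K\ni 0$) that an excursion actually crosses. Lemma~\ref{lem:reversal} converts the resulting excursion probabilities into moments of the time-reversed Dirichlet environment on $\partial_+(K)$: by formula~(\ref{moments}), the $p$-th moment of the contribution of $K$ is finite precisely when $p<\sum_{e\in\partial_+(K)}\alpha_e$. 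Crucially, the factor $1/\gamma^\omega(0)=\sum_\sigma\omega_\sigma$ supplies one extra factor of $\omega_e$ for each edge of a simple path $\sigma$ starting at $0$ and reaching $\partial\Lambda$, which is exactly what is required to neutralize the contribution of any cut $K\subset\Lambda$ entirely contained in the box. Only cuts with $K\cap\partial\Lambda\ne\emptyset$ survive the cancellation, and the worst of these dictates the threshold $\kappa^\Lambda$.

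Once the uniform $L^p$ bound is secured, $L^p$-compactness extracts a subsequential weak limit $\Q^{(\alpha)}$, absolutely continuous with respect to $\P^{(\alpha)}$ with density in $L^p$. Passing to the limit in the finite-volume invariance identity $\E_N^{(\alpha)}[f_N\cdot R^\Lambda g]=0$ (for any cylinder function $g$) proves $R^\Lambda$-invariance of $\Q^{(\alpha)}$. Uniqueness follows from ergodicity of the shift under $\P^{(\alpha)}$: the Radon--Nikodym derivative of any two such invariant measures must be shift-invariant, hence almost-surely constant, so they coincide after normalization. The hard part is the combinatorial cut decomposition: one must show that multiplying the Green function by $\sum_\sigma\omega_\sigma$ exactly cancels the traps internal to $\Lambda$, so that the integrability threshold rises from the full $\kappa$ to $\kappa^\Lambda$; this cut-pairing argument is the technical heart of the proof and is where the explicit form of $\kappa^\Lambda$ emerges.
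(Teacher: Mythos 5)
Your global architecture is the right one and matches the strategy of \cite{bouchet2013} and the sketch of the analogous Theorem~\ref{th:invariant} in Section~\ref{ss_invariant_proof}: work on the torus $T_N$, take the density proportional to $\pi_N^\omega(0)/\gamma^\omega(0)$ (this part is correct, and a direct computation using $\sum_i\omega(0,e_i)=1$ confirms $R^\Lambda$-invariance with a constant normalization), prove a uniform-in-$N$ $L^p$ bound, extract a weak limit, pass to the limit in the invariance identity, and get uniqueness from ergodicity. The problem is that the step which actually carries the theorem --- $\sup_N\E_N^{(\alpha)}[f_N^p]<\infty$ for all $p<\kappa^\Lambda$ --- is not established by what you write, and the mechanism you propose for it is not the one that works. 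The known proof does not decompose return excursions ``according to the first cut crossed''; it bounds $f_N^p$ by the arithmetico-geometric inequality as in~\eqref{inequality}, rewrites the resulting product as $\check\omega^{p\check\theta}/\omega^{p\theta}=\pi_N^{p\diverg(\theta)}$ (identity~\eqref{quotient}) for a flow $\theta$ with $\diverg(\theta)=\frac1{N^d}\sum_y(\delta_0-\delta_y)$, applies H\"older and Lemma~\ref{lem:reversal} to reduce everything to explicit Gamma moments~\eqref{moments}, and then requires a flow that is (i) compatible with the capacities, so that all Gamma arguments stay positive, and (ii) of $L^2$ norm bounded uniformly in $N$, so that the second-order estimates of the Gamma terms are uniform. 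Requirement (ii) is exactly where the hypothesis $d\ge3$ enters (uniformly bounded resistance between points of the torus); your sketch never uses $d\ge3$ anywhere, which is a reliable sign that a decisive ingredient is absent, since the statement is not available in $d=2$.

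Requirement (i) is where $\kappa^\Lambda$ emerges, via the max-flow min-cut theorem of Section~\ref{sub:optimization}: expanding the factor $1/\gamma^\omega(0)=\sum_\sigma\omega_\sigma$ amounts to adding $+1$ to the capacities along a path from $0$ to the exterior of $\Lambda$, which raises the maximal admissible flow strength from $\kappa$ to $\kappa^\Lambda$, and one must then perform surgery on the optimized flow to keep its $L^2$ norm bounded. You correctly sense this role of $1/\gamma^\omega(0)$ (``one extra factor of $\omega_e$ along a path''), but you assert rather than prove the key claim that ``the $p$-th moment of the contribution of $K$ is finite precisely when $p<\sum_{e\in\partial_+(K)}\alpha_e$'', and you explicitly defer the ``cut-pairing argument'' as the technical heart of the proof. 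Even granting that claim, an excursion-by-cut bookkeeping would at best give finiteness of moments at fixed $N$; it does not produce the uniform-in-$N$ bound of Lemma~\ref{main-lemma} type that the compactness and limit steps require. In short: the set-up and the final soft arguments are fine, but the core estimate --- H\"older plus time reversal plus a max-flow min-cut construction of $L^2$-bounded flows with capacities boosted by the acceleration factor --- is missing, and the route you sketch in its place would not deliver it.
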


\begin{remark}
The parameter $\kappa_\Lambda$ measures the strength of the strongest finite trap containing 0 and not contained in the set
$\Lambda$ (compare with Section~\ref{traps}). Indeed, the condition $\partial \Lambda \cap K \neq \emptyset$ ensures that there is an edge from
$K$ to $\Lambda^c$, hence that there is a path inside $K$ that goes from 0 to $\Lambda^c$. 
\end{remark}
\begin{remark}
If $\Lambda$ is a box of radius $r _\Lambda$, the formula is explicit:
\[ \kappa ^\Lambda =  \min _{i_0 \in [\![ 1,d ]\!]} \bigg(  \alpha _{i_0} + \alpha _{i_0 + d} + (r_\Lambda + 1) \sum _{i \neq i_0} (\alpha _{i} + \alpha _{i + d})  \bigg) .\]
In particular, $\kappa_\Lambda$ can be made arbitrarily large, hence the equivalence between static and
dynamic point of view has a positive answer for an accelerated process for arbitrary weights.  
\end{remark}

\begin{center} 
\begin{figure}[h]
\begin{tikzpicture}[scale=0.8,>=stealth]
\draw [ultra thin, gray] (-2,-4) grid (6,3);
\tikzstyle{fleche}=[thick, dashed, ->] 
\draw[fleche] (0,0) -- (-1,0) ;
\draw[fleche] (0,0) -- (0,1) ;
\draw[fleche] (0,0) -- (0,-1) ;
\draw[fleche] (2,0) -- (3,0) ;
\draw[fleche] (1,0) -- (1,1) ;
\draw[fleche] (1,-1) -- (0,-1) ;
\draw[fleche] (1,-1) -- (1,-2) ;
\draw[fleche] (2,0) -- (2,1) ;
\draw[fleche] (2,-1) -- (3,-1) ;
\draw[fleche] (2,-2) -- (1,-2) ;
\draw[fleche] (2,-2) -- (2,-3) ;
\draw[fleche] (3,-2) -- (3,-1) ;
\draw[fleche] (3,-2) -- (3,-3) ;
\draw[fleche] (4,-2) -- (4,-1) ;
\draw[fleche] (4,-2) -- (4,-3) ;
\draw[fleche] (4,-2) -- (5,-2) ;
\draw[ -, very thick] (0,0) -- (1,0) -- (1,-1) -- (2,-1) -- (2,-2) -- (3, -2) -- (4,-2);
\draw[ -, very thick] (1,0) -- (2,0) -- (2,-1);
\draw [thick, dotted] (-1,2) -- (3,2) -- (3,1) -- (4,0) -- (4,-3) -- (-1,-3) -- (-1,2) ;
\draw (0,0) node[above right]{$0$} ;
\draw (3,2) node[above right]{$\Lambda$};
\end{tikzpicture}
\caption{$ \Sum _{e \in \partial_+(K)}  \alpha _e $ (dashed arrows) for an arbitrary $K$ (thick lines)\protect\footnotemark.\label{pic:lambda}}
\end{figure}
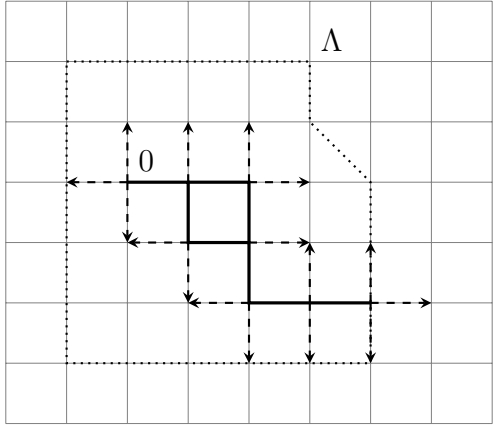
\end{center}
\footnotetext{The picture is borrowed from \cite{bouchet2013}, with kind authorization of E. Bouchet.}

\subsection{Ballistic regimes in dimension $d\ge 3$}

In dimension $d\ge 3$ it is possible to describe the family of parameters for which there is ballisticity.
It is contained in the next theorem and it is mainly a consequence of the existence of invariant measures for the process of the environment 
viewed from the particle.
\begin{theorem}
\label{ballisticity}
(i) Assume $d\ge 3$ and $d_\alpha\neq 0$. Then if $\kappa >1$, there exists $v\in \R^d\setminus\{0\}$ such that
$$
\lim_{n\to \infty} {X_n \over n} = v.
$$

(ii) If $d_\alpha=0$ or $\kappa\le 1$, then
$$
\lim_{n\to \infty} {X_n \over n} = 0.
$$

(iii) More precisely, if $d\ge 3$, $d_\alpha \cdot \ell > 0$ for some $\ell\in \R^d\setminus\{0\}$, and $\kappa\le 1$,  then

\[  \lim _{n \to + \infty} \frac{ \log( X_n \cdot \ell ) }{ \log (n) } = \kappa \text{ in } P_0^{(\alpha)} \text{-probability}.\]

\end{theorem}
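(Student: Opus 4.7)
The three parts are handled by rather different mechanisms, so I treat them in turn.

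For (i), the approach is to apply Birkhoff's theorem to the process of the environment viewed from the particle. By Theorem~\ref{th:invariant}, since $d\ge3$ and $\kappa>1$, there exists a probability measure $\Q^{(\alpha)}$ on $\W$, equivalent to $\P^{(\alpha)}$, invariant under the generator $R$, with density in $L^p(\P^{(\alpha)})$ for every $p<\kappa$. Uniqueness of $\Q^{(\alpha)}$ implies ergodicity of $R$ under $\Q^{(\alpha)}$; hence Birkhoff's ergodic theorem applied to the local drift $d(\omega)=\sum_{i=1}^{2d}\omega(0,e_i)e_i$, together with the martingale decomposition $X_n-\sum_{k=0}^{n-1}d(\tau_{X_k}\omega)=O(\sqrt n)$ (the martingale has bounded increments), gives
\[
\frac{X_n}{n}\longrightarrow v:=E_{\Q^{(\alpha)}}[d(\omega)]\qquad P_0^{(\alpha)}\text{-a.s.}
\]
It remains to check $v\ne 0$. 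Pick $\ell\in\Q^d$ with $d_\alpha\cdot\ell>0$; Theorem~\ref{th:transience_dir} gives directional transience in direction $\ell$, so the regeneration times $(\tau_k)_{k\ge 1}$ are a.s.\ finite and $(\tau_{k+1}-\tau_k,X_{\tau_{k+1}}-X_{\tau_k})_{k\ge 1}$ is i.i.d. The $L^p$ bound with $p>1$ on $d\Q^{(\alpha)}/d\P^{(\alpha)}$ translates into $E[\tau_2-\tau_1]<\infty$, so the renewal LLN yields $v=E[X_{\tau_2}-X_{\tau_1}]/E[\tau_2-\tau_1]$, and $v\cdot\ell>0$ because every regeneration strictly increases $X_n\cdot\ell$.

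For (ii), the two cases are handled separately. If $\kappa\le 1$, Proposition~\ref{pro:traps} is exactly the claim. If $\kappa>1$ and $d_\alpha=0$, part~(i) provides an a.s.\ limit $X_n/n\to v\in\R^d$; but Theorem~\ref{th:transience_dir}(i) tells us that for every $\ell\ne 0$ we have $\liminf_n X_n\cdot\ell=-\infty$ and $\limsup_n X_n\cdot\ell=+\infty$, which excludes both $v\cdot\ell>0$ and $v\cdot\ell<0$; hence $v\cdot\ell=0$ for every $\ell$, i.e.\ $v=0$.

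For (iii), with $d_\alpha\cdot\ell>0$ and $\kappa\le 1$ the walk is directionally transient but sublinear, and the exponent is set by the finite traps of size~$2$. Using again the regeneration times $(\tau_k)_{k\ge 1}$ in direction $\ell$ (which are a.s.\ finite by Theorem~\ref{th:transience_dir}), the plan is to prove
\[
\frac{\log P^{(\alpha)}_0(\tau_2-\tau_1>t)}{\log t}\longrightarrow -\kappa\qquad(t\to\infty),
\]
and that $E[|X_{\tau_2}-X_{\tau_1}|]<\infty$. The upper bound rests on Proposition~\ref{thm:finite_trap}: any long excursion between two regenerations must be concentrated in a bounded region, and the strongest such region is a pair of neighbours, with holding-time tail of index $\kappa$ (Section~\ref{traps}). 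The matching lower bound is obtained by producing, with probability at least $t^{-\kappa-o(1)}$, a pair of adjacent vertices on the path of the walk whose holding time exceeds $t$. Granted this, the renewal LLN gives $X_{\tau_k}\cdot\ell\sim c k$ a.s., while the generalised central limit theorem for i.i.d.\ sums in the domain of attraction of a positive $\kappa$-stable law (with a logarithmic correction when $\kappa=1$) yields $\log\tau_k/\log k\to 1/\kappa$ in probability. Setting $K(n)=\max\{k:\tau_k\le n\}$, inversion gives $\log K(n)/\log n\to\kappa$, and the sandwich $X_{\tau_{K(n)}}\cdot\ell\le X_n\cdot\ell\le X_{\tau_{K(n)+1}}\cdot\ell$ yields the announced convergence.

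The main obstacle is the sharp lower bound on the tail of $\tau_2-\tau_1$: while the upper bound follows relatively painlessly from Proposition~\ref{thm:finite_trap}, the matching lower bound requires constructing an explicit event of probability $t^{-\kappa-o(1)}$ on which a genuine pair-of-neighbours trap is present on the trajectory before the next regeneration, and verifying that the walk actually enters this trap for a time $\ge t$. The time-reversal Lemma~\ref{lem:reversal}, through the Beta description of return probabilities~(\ref{returnproba}), and the explicit moment formula~(\ref{moments}) are the main analytic ingredients that make such a construction tractable.
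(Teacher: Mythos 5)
Your plan for (i) follows the same route as the paper's source (environment viewed from the particle: Theorem~\ref{th:invariant}, uniqueness giving ergodicity, Birkhoff plus a martingale remainder, $v=E_{\Q^{(\alpha)}}[d(\omega)]$), but the decisive point, $v\neq0$, is not actually proved. You assert that the $L^p$ bound ($p>1$) on $\d\Q^{(\alpha)}/\d\P^{(\alpha)}$ ``translates into'' $E[\tau_2-\tau_1]<\infty$; no mechanism is given, and this is precisely the nontrivial step -- directional transience alone is compatible with non-integrable regeneration times (that is exactly what happens when $\kappa\le1$), and integrability does not follow from an $L^p$ density bound by any standard computation. What is really used is stationarity and ergodicity under $\Q^{(\alpha)}$ (equivalent to $\P^{(\alpha)}$), combined with the positive never-backtracking probability of Corollary~\ref{cor:transience_dir}, to produce a positive density of regeneration times along the stationary walk, hence linear growth of $X_n\cdot\ell$. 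In (ii), the case $\kappa\le1$ via Proposition~\ref{pro:traps} matches the paper, but your treatment of $d_\alpha=0$ invokes part (i) and therefore needs $d\ge3$ (and $\kappa>1$); the statement carries no dimension restriction, and for $d=2$, $\kappa>1$, $d_\alpha=0$ your argument gives nothing. The paper instead combines Theorem~\ref{th:transience_dir}(i) with Zerner's general law of large numbers \cite{zerner2002}, which yields $X_n\cdot\ell/n\to0$ in every dimension on the oscillation event.

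The genuine gap is in (iii). Your scheme needs the two-sided tail estimate $P_0^{(\alpha)}(\tau_2-\tau_1>t)=t^{-\kappa+o(1)}$ (plus $E^{(\alpha)}_0[|X_{\tau_2}-X_{\tau_1}|]<\infty$), and neither bound is established. The upper bound does not come ``relatively painlessly'' from Proposition~\ref{thm:finite_trap}: that result controls the time spent in a \emph{fixed finite} set, and the whole difficulty is to rule out that the walk spends a long time between regenerations without being confined to a bounded region; in the paper this moment characterization (Theorem~\ref{integrability}) is only available under a $(T)_\gamma$-type condition, which is \emph{not} known for all weights with $d_\alpha\cdot\ell>0$ (it is conjectured; only the condition \eqref{kalikow-cond} is proved to imply it). The lower bound you explicitly leave as ``the main obstacle''. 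So as written, (iii) is a programme rather than a proof, and it is exactly here that the paper takes a different route which bypasses regeneration-tail estimates: accelerate the walk through the local function $\gamma^\omega$ of \eqref{gamma_omega}, use Theorem~\ref{Elodie} (with $\Lambda$ large enough that $\kappa_\Lambda>1$) to get an absolutely continuous invariant measure and a law of large numbers with nonzero speed for the accelerated process $Z_t$ when $d_\alpha\neq0$, and then recover $\log(X_n\cdot\ell)/\log n\to\kappa$ from the tail of index exactly $\kappa$ (coming from the pair traps of Section~\ref{traps}) of the additive functional converting accelerated time into real time, as in \cite{bouchet2013}. To salvage your route you would have to either prove a $(T)$-type condition for all such weights (open) or import that accelerated-process machinery anyway.
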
 
\begin{remark}
Note that from Theorem \ref{th:transience_dir},  $v$ is necessarily of the form $c\,d_\alpha$ for $c\in \R_+^*$.
\end{remark}
\begin{remark}
	The case (i) is proved in \cite{sabot2013particle} Theorem 2, it is a consequence of the existence of an absolutely continuous
invariant measure, cf.\ Theorem \ref{th:invariant} (cf.\ Section~\ref{ss_invariant_proof} for a sketch of proof).
The result (iii) comes from \cite{bouchet2013}, Theorem 1. It makes use of the accelerated process $Z_t$ defined in Section~\ref{ss_ballistic}.
For this accelerated process it is possible to prove a law of large numbers with non zero speed
when $\kappa_\Lambda>1$, $d\ge 3$ and $d_\alpha\neq 0$. From this process, we can recover the order of growth of the process
$X_n$.
The case (ii) when $d_\alpha=0$ is a simple consequence of the general law of large numbers of Zerner, \cite{zerner2002}.
When $\kappa\le 1$, it is a simple consequence of the existence of finite traps, cf.~Section~\ref{traps}, Proposition \ref{pro:traps}.
\end{remark}
\begin{remark}
A famous conjecture is that for uniformly elliptic RWRE in dimension $d\ge 2$, directional transience implies
ballisticity. We see that it is not true for non uniformly elliptic environment, in particular for Dirichlet environment.
Nevertheless, the previous theorem and the results of \cite{bouchet2013} tells that in dimension $d\ge 3$,  
it only comes from finite trap effects.
Indeed, Theorem 2.4 of \cite{bouchet2013} tells that when $\kappa_\Lambda>1$, 
$d_\alpha\neq 0$ implies that the accelerated process $Z_t$ has an asymptotic positive speed.
Hence, for a sufficiently large $\Lambda$, after acceleration by $\gamma^\w(x)$, which is a local function of the environment, 
directional transience implies ballisticity. It hence gives in dimension
$d\ge 3$ for Dirichlet environments an answer to the conjecture.
\end{remark}

It is expected that the same is true 
in dimension $d=2$, but we are still far for a proof of that. The argument used in the proof of the existence of the invariant measure viewed from the
particle is perfectly well suited for transient graphs, through the use of the existence of unit flows between two points with bounded $L^2$ norms, cf.~Section~\ref{ss_ballistic}. 
It is not clear whether an absolutely continuous invariant measure exists in dimension $d=2$, it is even believed that it does not exist in the case where
$d_\alpha=0$.

\section{Integrability of renewal times and functional central limit theorem in the directionally transient regime}
\label{sec:CLT}

Assume that $d_\alpha\cdot \ell \neq 0$ for some direction $\ell\in \R^d\setminus\{0\}$. 
We know from Theorem \ref{th:transience_dir} that the RWDE is transient in the
direction $\ell$. Denote by 
$(\tau_k)_{k\ge 0}$ the renewal times in the direction $\ell$, i.e.\ $\tau_0=0$ and, for all $k\ge0$, 
\[\tau_{k+1}=\inf\Big\{n> \tau_k\,:\,\forall i<n\le j,\ X_i\cdot\ell< X_n\cdot\ell\le X_j\cdot\ell\Big\}.\]
It follows from the transience in direction $\ell$ that these times are finite a.s.\ (cf.~\cite{sznitman-zerner1999}). 
If $\kappa>1$, in dimension $d\ge 3$, we know that the RWDE is ballistic 
and hence that the renewal times are integrable (integrability of renewal times is easily seen to be equivalent 
to positive speed, cf.~\cite{sznitman-zerner1999}). But the existence of the absolutely continuous invariant measure fails
to give any information about the finiteness of higher moments of renewal times, and finiteness of second 
moments of renewal times is a necessary ingredient to prove a central limit theorem. 
The next two sections answer partially the questions of integrability of renewal times and annealed/quenched central limit theorem.

\subsection{$(T)$ condition and integrability of renewal times}
\begin{definition}[\cite{sznitman2001}]
Let $\gamma\in (0,1)$. The $(T)_\gamma$ condition in the direction $\ell$ is satisfied if
$(X_n)$ is transient in the direction $\ell$ and if
there exists $c>0$ such that
$$
E_0\left[ \exp \left( c\sup_{0\le n\le \tau_1} | X_n|^\gamma \right)\right] <\infty,
$$
where $\tau_1$ is the first renewal time in the direction $\ell$.
We say that $(T)'$ is satisfied if $(T)_\gamma$ is satisfied for all $\gamma<1$.
\end{definition}
\begin{remark}\label{rem:polyn}
Equivalence between the $(T)_\gamma$ conditions 
and weaker polynomial conditions have been proved by Berger, Drewitz, Ram\'irez (\cite{berger-ramirez2014}), in the uniformly elliptic case,  and extended to
the weakly elliptic case (including Dirichlet cases) by Campos, Ram\'irez~(\cite{campos-ramirez2014}).
\end{remark}
In the case of uniformly elliptic environment in dimension $d\ge 2$, the $(T)'$ condition was proved to imply integrability
of all moments of renewal times. (Note that this is not true in dimension 1 due to the presence of traps.)
This is no longer true in the weakly elliptic case, in particular conterexamples are given by the Dirichlet case.
Indeed, when $\kappa<1$ and $d_\alpha\neq 0$, the RWDE is directionally transient and satisfies a law of large numbers with speed 0, hence
renewal times are not integrable (cf.~\cite{sabot-tournier2011} and Proposition~\ref{pro:traps}); still, one can find choices of the parameters so that $(T)'$ also holds (cf.~Theorem~5 in~\cite{bouchet-ramirez-sabot}). 
The question of integrability of renewal times in the weakly elliptic case was considered first by Campos, Ram\'irez (\cite{campos-ramirez2014}), then improved
by Bouchet, Ram\'irez, Sabot (\cite{bouchet-ramirez-sabot}), and Fribergh, Kious (\cite{kious-fribergh}).
We state below the specification to Dirichlet case of Theorem 1 of \cite{bouchet-ramirez-sabot}.
\begin{theorem}
\label{integrability}
Consider the RWDE with parameters $(\alpha_i)_{i=1, \ldots, 2d}$ in dimension  $d\ge 2$.
Assume that there exist $\ell$ such that $d_\alpha \cdot \ell >0$. Consider the renewal times $(\tau_k)_{k\ge 1}$ 
in the direction $\ell$. Assume that $(T)_\gamma$ is satisfied for some $\gamma \in (0,1)$.
Then $E^{(\alpha)}_0[(\tau_1)^s]<\infty$ if and only if $s<\kappa$.
\end{theorem}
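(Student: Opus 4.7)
The plan is to treat the two implications separately: the finite-trap picture of Section~\ref{traps} suffices for the necessary direction, while sufficiency combines it with the spatial control provided by $(T)_\gamma$. The interesting point is that the exponent $\kappa$ appears on both sides for essentially the same reason, namely that the size-two trap governs all the integrability of Green functions in Proposition~\ref{thm:finite_trap}.

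\emph{Necessity ($s\ge\kappa$ implies $E_0^{(\alpha)}[\tau_1^s]=\infty$).} Pick $i^\ast\in\{1,\ldots,d\}$ realizing $\alpha_{i^\ast}+\alpha_{i^\ast+d}=\max_i(\alpha_i+\alpha_{i+d})$, so that $\alpha_{\{0,e_{i^\ast}\}}=\kappa$. Let $A_k$ be the event that the walk alternates $0,e_{i^\ast},0,e_{i^\ast},\ldots,e_{i^\ast},0$ during its first $2k$ steps, bouncing inside the pair $\{0,e_{i^\ast}\}$. On $A_k$ every position satisfies $X_n\cdot\ell\in\{0,\ell\cdot e_{i^\ast}\}$ with $X_{2k}\cdot\ell=0$, so the strictly-increasing requirement in the definition of $\tau_1$ cannot be fulfilled at any $n\le 2k$, hence $\tau_1>2k$. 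By independence of the Dirichlet weights at distinct vertices together with the marginal moment formula~\eqref{eq:moments},
\begin{equation*}
P_0^{(\alpha)}(A_k)=\frac{\Gamma(\alpha_{i^\ast}+k)\Gamma(\alpha_0)}{\Gamma(\alpha_{i^\ast})\Gamma(\alpha_0+k)}\cdot\frac{\Gamma(\alpha_{i^\ast+d}+k)\Gamma(\alpha_0)}{\Gamma(\alpha_{i^\ast+d})\Gamma(\alpha_0+k)}\asymp k^{-(2\alpha_0-\alpha_{i^\ast}-\alpha_{i^\ast+d})}=k^{-\kappa},
\end{equation*}
where $\alpha_0=\sum_j\alpha_j$. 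Thus $P_0^{(\alpha)}(\tau_1>2k)\gtrsim k^{-\kappa}$ and $E_0^{(\alpha)}[\tau_1^s]=\infty$ for every $s\ge\kappa$.

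\emph{Sufficiency ($s<\kappa$ implies $E_0^{(\alpha)}[\tau_1^s]<\infty$).} Fix $s<s'<\kappa$ and let $B_R=[-R,R]^d\cap\Z^d$ with first exit time $T_R$. On $\{\sup_{n\le\tau_1}|X_n|\le R\}$ we have $\tau_1\le T_R$, so
\begin{equation*}
P_0^{(\alpha)}(\tau_1>t)\le P_0^{(\alpha)}(T_R>t)+P_0^{(\alpha)}\Big(\sup_{n\le\tau_1}|X_n|>R\Big),
\end{equation*}
and the $(T)_\gamma$ assumption bounds the second term by $Ce^{-cR^\gamma}$. For the first term, a quenched Khas'minskii iteration (starting from $P_x^\omega(T_R>2M)\le 1/2$ with $M=\sup_{x\in B_R}E_x^\omega[T_R]$) yields $E_0^\omega[T_R^{s'}]\le C_{s'}M^{s'}$; combined with $E_x^\omega[T_R]=\sum_{y\in B_R}G_{B_R}^\omega(x,y)\le|B_R|\max_{y\in B_R}G_{B_R}^\omega(y,y)$, where $G^\omega_{B_R}(x,y)\le G^\omega_{B_R}(y,y)$ comes from the strong Markov property at the first visit to $y$, this gives
\begin{equation*}
E_0^\omega[T_R^{s'}]\le C_{s'}|B_R|^{s'}\Big(\max_{y\in B_R}G_{B_R}^\omega(y,y)\Big)^{s'}.
\end{equation*}
Taking annealed expectation with a union bound over $y$ and using translation invariance together with domain monotonicity ($G_{B_R}^\omega(y,y)\overset{d}{=}G_{B_R-y}^\omega(0,0)\le G_{B_{2R}}^\omega(0,0)$ quenched) reduces the control to Proposition~\ref{thm:finite_trap} applied to $A=B_{2R}$; every nearest-neighbor pair inside $B_{2R}$ satisfies $\alpha_{\{x,y\}}\ge\kappa>s'$, so $\sup_{R,y}\E^{(\alpha)}[G_{B_R}^\omega(y,y)^{s'}]\le C$ and $E_0^{(\alpha)}[T_R^{s'}]\le CR^{d(s'+1)}$. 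Markov's inequality and the choice $R=t^a$ with $0<a<(s'-s)/(d(s'+1))$ then give, for $t\ge 1$,
\begin{equation*}
P_0^{(\alpha)}(\tau_1>t)\le Ct^{ad(s'+1)-s'}+Ce^{-ct^{a\gamma}},
\end{equation*}
with exponent strictly less than $-s$. Splitting $E_0^{(\alpha)}[\tau_1^s]=s\int_0^\infty t^{s-1}P_0^{(\alpha)}(\tau_1>t)\,dt$ at $t=1$ (bounded by $1/s$ on $[0,1]$ using $P\le 1$, and integrable on $[1,\infty)$ by the above) concludes.

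The main obstacle is the uniform-in-$R$ control of $\E^{(\alpha)}[G_{B_R}^\omega(y,y)^{s'}]$: it is precisely because the integrability threshold of Proposition~\ref{thm:finite_trap} equals the minimal pair weight $\kappa$ and does \emph{not} deteriorate as the domain grows that the exponent in Theorem~\ref{integrability} is exactly $\kappa$. Everything else (Khas'minskii, Markov, and the bouncing lower bound) is bookkeeping around this key input and the spatial control coming from the $(T)_\gamma$ assumption.
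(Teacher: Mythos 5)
Your necessity argument (the bouncing walk in the pair $\{0,e_{i^\ast}\}$, with $P_0^{(\alpha)}(A_k)\asymp k^{-\kappa}$ by the Beta moment formula) is correct and is indeed the standard finite-trap lower bound; note in passing that the paper does not prove Theorem~\ref{integrability} at all but quotes it from Bouchet--Ram\'irez--Sabot, so the comparison below is with what is actually needed to make your sufficiency direction work.

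The sufficiency direction has a genuine gap at the step you yourself flag as ``the main obstacle'': the claim $\sup_{R}\E^{(\alpha)}\big[G^{\omega}_{B_{2R}}(0,0)^{s'}\big]<\infty$ for $s'<\kappa$ does \emph{not} follow from Proposition~\ref{thm:finite_trap}. That proposition asserts, for each \emph{fixed} finite set $A$, finiteness of $\E^{(\alpha)}[G^A_\omega(0,0)^{s}]$ for $s<\min\alpha_{\{x,y\}}$, with no quantitative control as $A$ grows; the supremum over $R$ of these moments is, by monotone convergence, exactly $\E^{(\alpha)}[G_\omega(0,0)^{s'}]$ for the full-space Green function. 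That statement is precisely the hard part of Theorem~\ref{th:transience}(i), proved in the paper only for $d\ge3$ and only through the time-reversal lemma, H\"older manipulations of partition functions and the max-flow min-cut construction --- it is not a soft consequence of the size-two trap computation, which only identifies the threshold, not uniformity. For $d=2$, which your statement must cover, no such bound is available (Theorem~\ref{th:transience}(ii) gives only a $(\ln N)^{-1/2-\eps}$ escape estimate), and nothing in your Green-function step uses $d_\alpha\cdot\ell>0$ or $(T)_\gamma$, so it cannot distinguish the transient planar case from the recurrent one where $G^{B_R}_\omega(0,0)$ genuinely grows with $R$. This is why the actual proof in the cited reference is much more involved: it uses $(T)_\gamma$ not merely to truncate the displacement, but to run a fine analysis of the time spent in mesoscopic boxes, with time-reversal/flow estimates replacing the uniform Green-function moment bound you assume.

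In dimension $d\ge3$ your skeleton can be repaired by invoking Theorem~\ref{th:transience}(i) instead of Proposition~\ref{thm:finite_trap} at that step; the surrounding bookkeeping (quenched Khas'minskii iteration, $E_x^\omega[T_R]\le|B_R|\max_yG^{B_R}_\omega(y,y)$, union bound, $R=t^a$ with $a$ small, and the $(T)_\gamma$ tail for $\sup_{n\le\tau_1}|X_n|$) is sound and the polynomial losses in $R$ are harmless. But as written, the proof establishes the ``if'' part neither for $d=2$ nor, without the substitution above, for $d\ge3$, because its key input is asserted rather than proved.
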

\begin{remark}
	In fact $(T)_{\gamma}$ condition can be replaced by the polynomial condition $(P)_M$ mentioned in Remark~\ref{rem:polyn} for $M> 15d+5$, but
in the Dirichlet case we are actually able to prove the condition $(T)_1$ under a condition on the weights,
cf.\ forthcoming Theorem~\ref{Kalikow}.
\end{remark}
Sufficient conditions implying $(T)_1$ were given by Enriquez, Sabot (\cite{enriquez-sabot2006}) and improved
by Tournier (\cite{tournier2009integrability}). In fact Kalikow's condition was proved (we do not define Kalikow's condition here)
which is known to imply $(T)_1$ (cf.~\cite{sznitman2001}).
\begin{theorem}
\label{Kalikow}
Consider a RWDE $(X_n)_{n\ge 0}$ with parameters $(\alpha_i)_{i=1,\ldots, 2d}$.
If 
\begin{equation}
\label{kalikow-cond}
\sum_{i=1}^d \vert \alpha_i-\alpha_{i+d}\vert >1,
\end{equation}
then there exists a direction $\ell\in \R^d\setminus\{0\}$,  such that the
RWDE satisfies Kalikow's condition (cf.~\cite{kalikow1981}), hence $(T)_1$ condition in the direction $\ell$.
\end{theorem}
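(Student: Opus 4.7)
The plan is to verify Kalikow's condition directly in the direction $\ell\in\{-1,0,1\}^d$ with $\ell_i=\mathrm{sign}(\alpha_i-\alpha_{i+d})$, for which hypothesis~\eqref{kalikow-cond} reads exactly $d_\alpha\cdot\ell=\sum_{i=1}^d|\alpha_i-\alpha_{i+d}|>1$. Kalikow's condition requires
\[
\hat d_U(x) := \sum_{i=1}^{2d} e_i\,\frac{\E^{(\alpha)}[G^U_\omega(0,x)\,\omega(x,x+e_i)]}{\E^{(\alpha)}[G^U_\omega(0,x)]}
\]
to satisfy $\hat d_U(x)\cdot\ell\geq c>0$ uniformly in finite $U\ni 0$ and $x\in U$; it then implies the $(T)_1$ condition in direction $\ell$ by~\cite{sznitman2001}. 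The strict inequality ``$>1$'' is precisely the slack that will absorb the boundary correction in $\hat d_U(x)$.

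To bring Lemma~\ref{lem:reversal} into play, I embed the setup in a finite divergence-free graph. Given $U$, build $G_U$ on $U\cup\{\partial\}$ by keeping the nearest-neighbor edges of $U$ with their weights $\alpha_j$, adding an exit edge $(y,\partial)$ of weight $\alpha_j$ for every $y\in U$ and every $j$ with $y+e_j\notin U$, and adding a return edge $(\partial,y)$ of weight $\beta_y:=\sum_{j:y+e_j\notin U}\alpha_{\bar j}$ (where $\bar j$ is $j+d$ if $j\leq d$ and $j-d$ otherwise), chosen precisely so that $\dive(\alpha)\equiv 0$ on $G_U$. Since the killed walk in $U$ has the same law as the walk on $G_U$ stopped at $\partial$, $\hat d_U(x)$ may be computed in $G_U$, and Lemma~\ref{lem:reversal} then gives that the reversed environment on $G_U$ is again Dirichlet, with weights $\check\alpha_{(v,u)}=\alpha_{(u,v)}$.

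The core step is the explicit computation of $\hat d_U(x)$. Conditioning on the environment $\omega_{-x}$ off $x$, the strong Markov property at $\tau_x$ gives $G^U_\omega(0,x)=P^{\omega_{-x}}_0(\tau_x<\tau_\partial)/\sum_i\omega(x,x+e_i)q_i$ with $q_i:=P^{\omega_{-x}}_{x+e_i}(\tau_\partial<\tau_x)$. Then $\omega(x,\cdot)\sim\dir(\alpha_1,\ldots,\alpha_{2d})$ is independent of $q$, and the Gamma-ratio representation (Property~\ref{pro:gamma_ratio}) combined with the identity $1/u=\int_0^\infty e^{-ut}\d t$ yields explicit Laplace-type integrals for $\E[G^U_\omega(0,x)\omega(x,x+e_j)\mid\omega_{-x}]$. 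After the common factor $P^{\omega_{-x}}_0(\tau_x<\tau_\partial)$ cancels in the Kalikow ratio, $\hat d_U(x)$ becomes a weighted average over $\omega_{-x}$ of the pointwise quantity
\[
F(q)\cdot\ell \;=\; \frac{\int_0^\infty R(t)\sum_j\alpha_j\ell'_j/(1+tq_j)\,\d t}{\int_0^\infty R(t)\sum_j\alpha_j/(1+tq_j)\,\d t},\qquad R(t):=\prod_i(1+tq_i)^{-\alpha_i},\ \ell'_j:=e_j\cdot\ell,
\]
so it suffices to prove $F(q)\cdot\ell\geq c>0$ uniformly in the admissible $q\in[0,1]^{2d}$.

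The hard part is this pointwise inequality. At $t=0$ the integrand ratio equals $d_\alpha\cdot\ell/\alpha_0>1/\alpha_0$, but as $t\to\infty$ it degenerates to the $\ell$-average over the exit directions $\{j:q_j=1\}$, which can be negative. An integration by parts exploiting $R'(t)=-R(t)\sum_i\alpha_iq_i/(1+tq_i)$, combined with the return-edge identity~\eqref{returnproba} (itself a consequence of Lemma~\ref{lem:reversal}) to control the law of the $q_i$'s, should yield a correction bounded by $1$ in absolute value, giving $F(q)\cdot\ell \geq (d_\alpha\cdot\ell -1)/(\alpha_0+\beta_x) \geq (d_\alpha\cdot\ell-1)/(2\alpha_0)>0$ uniformly, using the slack $d_\alpha\cdot\ell>1$ and $\beta_x\leq\alpha_0$. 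Getting the sharp constant~$1$ in the correction is where the Dirichlet structure, through time reversal, is truly used.
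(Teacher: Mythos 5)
Your reduction is the right one and, up to the final estimate, it follows the route of Enriquez--Sabot and Tournier in which the proof of this theorem actually lives (the present paper only cites it): condition on $\omega_{-x}$, write $G^U_\omega(0,x)=P_{0,\omega}(\tau_x<\tau_U)\big/\sum_i\omega(x,x+e_i)q_i$ with $(q_i)$ measurable with respect to $\omega_{-x}$, use Property~\ref{pro:gamma_ratio} and $1/u=\int_0^\infty e^{-ut}\d t$ to get $\E\big[\omega_j/\sum_i\omega_i q_i\,\big|\,\omega_{-x}\big]=\int_0^\infty R(t)\frac{\alpha_j}{1+tq_j}\d t$, and note that $\hat d_U(x)\cdot\ell$ is a weighted average of $F(q)\cdot\ell$, so a bound uniform in $q\in(0,1]^{2d}$ suffices. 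The genuine gap is that this uniform bound --- the only place where the hypothesis $\sum_i|\alpha_i-\alpha_{i+d}|>1$ enters --- is never proved: you only assert that an integration by parts ``should yield a correction bounded by $1$''. Moreover, the mechanism you propose for it cannot work: after the reduction the required inequality is deterministic in $q$, so there is no ``law of the $q_i$'s'' left to control, and the identity~\eqref{returnproba} / Lemma~\ref{lem:reversal} has nothing to contribute; time reversal is not the tool behind this theorem (the paper itself attributes it to an integration-by-parts formula for Dirichlet laws), and the divergence-free completion $G_U$ you build in order to invoke Lemma~\ref{lem:reversal} is unnecessary scaffolding.

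The missing step is real but fillable by a purely deterministic argument. Set $\Phi(q)=\int_0^\infty R(t)\d t$ with $R(t)=\prod_i(1+tq_i)^{-\alpha_i}$; since $\alpha_0:=\sum_j\alpha_j\ge\sum_i|\alpha_i-\alpha_{i+d}|>1$, $\Phi$ is finite, smooth and homogeneous of degree $-1$ on $(0,\infty)^{2d}$, and $q_j\partial_{q_j}\Phi(q)=-\alpha_j\Phi(q)+\int_0^\infty R(t)\frac{\alpha_j}{1+tq_j}\d t$, whence by Euler's identity $\sum_j q_j(-\partial_{q_j}\Phi)=\Phi$. Consequently the denominator of $F(q)$ equals $(\alpha_0-1)\Phi(q)$, while the numerator equals $(d_\alpha\cdot\ell)\Phi(q)+\sum_j(e_j\cdot\ell)\,q_j\partial_{q_j}\Phi(q)\ge(d_\alpha\cdot\ell-1)\Phi(q)$ because $|e_j\cdot\ell|\le1$ and $\partial_{q_j}\Phi\le0$; this yields $\hat d_U(x)\cdot\ell\ge\frac{d_\alpha\cdot\ell-1}{\alpha_0-1}>0$ uniformly (a cleaner constant than your guess; your $\beta_x$ plays no role). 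So your intuition that the ``correction is bounded by $1$'' is correct, but its source is positivity and homogeneity, not the Dirichlet time-reversal structure. With this inequality supplied --- together with the routine check that $\E^{(\alpha)}[G^U_\omega(0,x)]<\infty$, needed to define Kalikow's auxiliary walk without uniform ellipticity and guaranteed here since $\kappa\ge\alpha_0>1$ --- your argument becomes a complete proof, and it then coincides in substance with the cited one rather than with the reversal techniques of Section~\ref{sec:proofs}.
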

This was proved using an integration by part formula, very specific to the Dirichlet case.
The following is an easy corollary of Theorems \ref{integrability} and \ref{Kalikow}.
\begin{corollary}
\label{integrability-Dirichlet}
Assume that $d \ge 2$ and that $d_\alpha \cdot \ell >0$ for some direction $\ell$.
Assume that the condition~\eqref{kalikow-cond} is satisfied, and denote $(\tau_k)_{k\ge 1}$
the renewal times in the direction $\ell$.
Then $E_0\left[ \tau_1^s\right]<\infty$ if and only if $s<\kappa$.
\end{corollary}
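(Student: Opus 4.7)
The plan is to combine Theorems~\ref{integrability} and~\ref{Kalikow} in a direct way. Theorem~\ref{Kalikow} supplies a direction in which $(T)_1$ holds, and from there Theorem~\ref{integrability} delivers the integrability characterization for renewal times in direction~$\ell$. There is essentially no new content to prove; the only substantive matter is verifying that the hypotheses of Theorem~\ref{integrability} are all met under condition~\eqref{kalikow-cond}.

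First, I would observe that~\eqref{kalikow-cond} automatically forces $d_\alpha\neq 0$. Indeed, writing $d_\alpha=\sum_{i=1}^d(\alpha_i-\alpha_{i+d})e_i$, if every difference $\alpha_i-\alpha_{i+d}$ vanished then the left-hand side of~\eqref{kalikow-cond} would be $0$, contradicting the strict inequality $>1$. Thus the hypothesis "$d_\alpha\cdot\ell>0$ for some $\ell$" needed by Theorem~\ref{integrability} is already part of the corollary's setup.

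Next, I would invoke Theorem~\ref{Kalikow}: condition~\eqref{kalikow-cond} yields a direction $\ell_0$ in which Kalikow's condition (and hence $(T)_1$) is satisfied. To feed this into Theorem~\ref{integrability}, which is stated for $(T)_\gamma$ with $\gamma\in(0,1)$, I would use the elementary bound $|X_n|^{\gamma}\le |X_n|$ valid whenever $|X_n|\ge 1$ (and both sides equal $0$ otherwise, since $X_n\in\Z^d$). Consequently, $\exp\bigl(c\sup_{0\le n\le\tau_1}|X_n|^\gamma\bigr)\le \exp\bigl(c\sup_{0\le n\le\tau_1}|X_n|\bigr)$, so that $(T)_1$ implies $(T)_\gamma$ in the same direction, with the same constant $c$.

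Finally, with $(T)_\gamma$ in hand for some $\gamma\in(0,1)$, I would apply Theorem~\ref{integrability} to the direction $\ell$ of the corollary (recalling $d_\alpha\cdot\ell>0$) to conclude $E_0[\tau_1^s]<\infty\iff s<\kappa$. The main (and essentially only) point requiring care is the matching of directions: Theorem~\ref{Kalikow} gives $(T)_1$ in its own direction $\ell_0$, while the renewal times in the corollary are taken with respect to $\ell$. By Theorem~\ref{th:transience_dir} the walk possesses the asymptotic direction $d_\alpha/|d_\alpha|$, so both $\ell$ and $\ell_0$ lie in the open half-space $\{u:u\cdot d_\alpha>0\}$; the $(T)_\gamma$ condition is stable across such a half-space, which legitimises applying Theorem~\ref{integrability} with the direction $\ell$ prescribed by the corollary.
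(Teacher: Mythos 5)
Your argument is correct and is exactly the route the paper intends: the corollary is presented there as an immediate combination of Theorems~\ref{integrability} and~\ref{Kalikow}, with the reduction $(T)_1\Rightarrow(T)_\gamma$ being trivial on $\Z^d$ just as you note. The only point the paper glosses over is the matching of the direction $\ell_0$ furnished by Theorem~\ref{Kalikow} with the corollary's direction $\ell$, and your appeal to the standard stability of $(T)_\gamma$ over the open half-space $\{u : u\cdot d_\alpha>0\}$ (combined with Theorem~\ref{th:transience_dir} to place both $\ell_0$ and $\ell$ in that half-space) is the accepted way to settle it.
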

It is expected that this condition is not optimal, and we conjecture that at least $(T)'$ is satisfied
when $d_\alpha\neq 0$, hence that renewal times have finite $s$-moments for $s<\kappa$ when the
RWDE is directionally transient, i.e.\ when $d_\alpha\neq 0$.
\subsection{Quenched functional central limit theorem}\label{def-FCLT}
Let $(X_n)_{n\ge 0}$ be a RWRE in $\Z^d$. 
Define a sequence of processes $ (B^{(n)}_t) _{t\geq0} $ by setting
\begin{equation}
B^{(n)}_t = \frac{X_{[nt]}-[nt]v}{\sqrt{n}}, \;\; t \ge 0,
\end{equation}
where $ [x] := \max \{ n \in \Z : n \leq x \}$ stands for the integer part of $x$. 
\begin{definition}
The RWRE $(X_n)$ satisfies an annealed functional central limit theorem (FCLT) with non degenerate covariance matrix, 
if $B^{(n)}$ converges weakly (for the Skorokhod topology) to a Brownian motion with non degenerate covariance matrix, under the annealed law
$P_0$.

The RWRE $(X_n)$ satisfies a quenched functional central limit theorem (FCLT) with non degenerate covariance matrix, 
if for $\P$-a.e.\ environment $\w$,  $B^{(n)}$ converges weakly (for the Skorokhod topology) to a Brownian motion with non degenerate covariance matrix, under the quenched law
$P_{0,\w}$.
\end{definition}

An annealed functional CLT has been proved by Sznitman, under a second moment condition
$E_0[(\tau_1)^2]<\infty$ for the renewal time, cf.~\cite{sznitman-00}.
Quenched functional CLT have been proved by Rassoul-Agha and Sepp\"al\"ainen in the weakly elliptic case
under very high moment conditions on renewal times (cf.~\cite{RS-Sepp}) and by Berger, Zeitouni, for uniformly elliptic case under
high moment condition (\cite{berger-zeitouni}). These proofs have been improved to get a near optimal moment condition on renewal times,
if a $(T)_\gamma$ condition is satisfied. This is the content of the next result from \cite{bouchet-sabot-dossantos}, that we state for simplicity
in the case of i.i.d.\ nearest neighbor RWRE.

\begin{theorem}
\label{theo:quenchedFCLT}
Set $d \geq 2 $. We consider a nearest neighbor random walk in an i.i.d.\ random environment. 
Assume that $(X_n)$ is transient in the direction $\ell \in \R ^d \setminus \{ 0 \}$, 
and denote by $\tau_1$ the corresponding regeneration time.
Suppose that the walk satisfies condition $ (T)_\gamma $ for some $ 0 < \gamma \leq 1 $ and
that $ E_0 \left[ \tau_1^{2} (\ln \tau_1 ) ^m \right] < + \infty $ for some $ m > 1 + \frac{1}{\gamma}$. 
Then, for $\P$-a.e.\ environment $\omega$, 
the process $B^{(n)}$ satisfies a quenched functional central limit theorem with a deterministic, 
non degenerate covariance matrix.
\end{theorem}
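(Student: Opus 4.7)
The plan is to adapt the Berger--Zeitouni / Rassoul-Agha--Sepp\"al\"ainen scheme: first establish the annealed FCLT via the i.i.d.\ renewal decomposition, then pass to the quenched statement by bounding the variance of quenched expectations through the joint behaviour of two independent walks evolving in the same environment. Under $(T)_\gamma$ the Sznitman--Zerner renewal times $(\tau_k)_{k\ge1}$ are a.s.\ finite and the blocks $(X_{\tau_{k+1}}-X_{\tau_k},\,\tau_{k+1}-\tau_k)$, $k\ge1$, are i.i.d.\ under $P_0$; the hypothesis forces their second moment to be finite. Donsker's invariance principle then yields an annealed FCLT for $B^{(n)}$ with speed $v=E_0[X_{\tau_2}-X_{\tau_1}]/E_0[\tau_2-\tau_1]$ and a deterministic non-degenerate covariance matrix $\Sigma$, non-degeneracy coming from ellipticity in the directions transverse to $\ell$.

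\textbf{Joint regenerations and variance decay.} The core of the argument is to introduce two independent copies $X,\widetilde X$ of the walk driven by the \emph{same} environment $\omega$ and to construct joint regeneration times $(\sigma_k)$ at which both walks simultaneously renew in direction $\ell$ and the slabs they have already visited are disjoint from the future trajectories of the other walk. Under the appropriate joint annealed measure the successive joint regeneration blocks are i.i.d. The quantitative input that I would need is a tail bound on $\sigma_1$: the $(T)_\gamma$ condition gives stretched-exponential control of the transverse extent of a single regeneration slab, and combining this with the hypothesis $E_0[\tau_1^2(\ln\tau_1)^m]<\infty$ should produce an estimate of the form
\begin{equation*}
P(\sigma_1>n)\,\le\,\frac{C}{(\log n)^{m-1-1/\gamma}}.
\end{equation*}
For any bounded Lipschitz functional $f$ on Skorokhod space, set $Q_n(\omega)=E_{0,\omega}[f(B^{(n)})]$. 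The identity
\begin{equation*}
\operatorname{Var}_{\P}(Q_n)=E_0\!\left[f(B^{(n)})f(\widetilde B^{(n)})\right]-\left(E_0[f(B^{(n)})]\right)^{\!2}
\end{equation*}
together with the joint regeneration decomposition then shows that $\operatorname{Var}_{\P}(Q_n)$ vanishes with a summable rate along a suitable subsequence $n_k\to\infty$.

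\textbf{Conclusion and the main obstacle.} Borel--Cantelli yields $Q_{n_k}(\omega)\to E[f(B_\Sigma)]$ for $\P$-a.e.\ $\omega$, where $B_\Sigma$ is a Brownian motion with covariance $\Sigma$. Interpolation between consecutive $n_k$'s uses the quenched ballistic law of large numbers together with tightness of $B^{(n)}$ under $P_{0,\omega}$, the latter following from standard maximal inequalities applied between regeneration times. The genuinely hard step is the joint regeneration tail bound: the threshold $m>1+1/\gamma$ reflects a sharp balance between the polynomial cost of pairing two regeneration blocks whose sizes have only a finite second moment and the stretched-exponential decoupling rate produced by $(T)_\gamma$. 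Any weakening of either ingredient destroys the summability needed for Borel--Cantelli, and this is the unique place in the proof where the logarithmic factor in the hypothesis is actually consumed.
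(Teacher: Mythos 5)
Your outline correctly identifies the route by which this theorem is actually established: the paper itself gives no proof but imports the result from \cite{bouchet-sabot-dossantos}, whose argument is precisely the Berger--Zeitouni / Rassoul-Agha--Sepp\"al\"ainen scheme you describe (annealed FCLT from the i.i.d.\ regeneration blocks, which have finite second moment under the hypothesis; then control of $\mathrm{Var}_{\P}\big(E_{0,\omega}[f(B^{(n)})]\big)$ via two independent walks in the same environment with joint regenerations; Borel--Cantelli along a subsequence; interpolation). So the strategy is the right one.

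However, the decisive quantitative step --- the only place where the near-optimal condition $m>1+\frac1\gamma$ can enter --- is exactly the step you defer, and the intermediate estimate you posit for it is not tenable. You claim that $(T)_\gamma$ and the log-moment hypothesis ``should produce'' $P(\sigma_1>n)\le C(\log n)^{-(m-1-1/\gamma)}$ and that this yields summable variance along a subsequence. A tail for the joint regeneration time decaying only logarithmically would not even give $E[\sigma_1]<\infty$, so the i.i.d.\ joint-block decomposition you rely on (which needs a law of large numbers for the number of joint regenerations before time $n$, hence integrable blocks) would collapse; in the actual argument $\sigma_1$ inherits polynomial tails from $\tau_1$ (via $P_0(\tau_1>n)\le C n^{-2}(\ln n)^{-m}$ and the $(T)_\gamma$ control of the width of regeneration slabs), and the logarithmic gain shows up elsewhere, namely in a bound of the form $\mathrm{Var}_{\P}\big(E_{0,\omega}[f(B^{(n)})]\big)\le C(\ln n)^{-a}$, obtained by truncating regeneration blocks at a scale dictated by $(T)_\gamma$ (stretched-exponential control of the maximal displacement within a block) and using the log-moment to control the truncated contribution, with $a>1$ precisely when $m>1+\frac1\gamma$. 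Moreover, with only polylogarithmic variance decay your ``suitable subsequence'' cannot be polynomial: one must take $n_k=\exp(k^\beta)$ with $\beta<1$, summability requires $a\beta>1$ (this is where the threshold is consumed), and one must then verify $n_{k+1}/n_k\to1$ so that the interpolation between consecutive $n_k$'s, via the quenched LLN and tightness, still goes through. None of this is carried out, and the estimate is mislocalized in your sketch, so the heart of the proof is missing.
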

We deduce from Corollary~\ref{integrability-Dirichlet} and the previous theorem, the following result in the Dirichlet case.
\begin{theorem}
\label{FCLT-Dirichlet}
Assume that $d\ge 2$ and consider the RWDE $(X_n)_{n\ge 0}$ with weights $(\alpha_i)$.
Assume that the condition~\eqref{kalikow-cond} is satisfied, and that $\kappa>2$. Then $(X_n)_{n\ge 0}$ satisfies
a quenched functional central limit theorem with a deterministic, non degenerate, covariance matrix.
\end{theorem}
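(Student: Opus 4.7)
The plan is simply to chain together three previously stated ingredients: Theorem~\ref{Kalikow} (which supplies a Kalikow direction from the hypothesis \eqref{kalikow-cond}), Corollary~\ref{integrability-Dirichlet} (which gives the sharp tail $\kappa$ for the renewal time), and the general quenched FCLT of Theorem~\ref{theo:quenchedFCLT}. No new probabilistic estimate is needed.

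First, I would invoke Theorem~\ref{Kalikow}: since \eqref{kalikow-cond} holds, there exists a direction $\ell\in\R^d\setminus\{0\}$ in which the RWDE satisfies Kalikow's condition, and hence in particular the ballistic condition $(T)_1$ with exponent $\gamma=1$. Kalikow's condition also implies directional transience in that direction; combined with Theorem~\ref{th:transience_dir} this forces $d_\alpha\cdot\ell>0$ (otherwise the walk would oscillate on both sides of the hyperplane orthogonal to $\ell$). This verifies the hypotheses of Corollary~\ref{integrability-Dirichlet} for this specific $\ell$.

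Next, applying Corollary~\ref{integrability-Dirichlet} with this $\ell$, I obtain $E_0^{(\alpha)}[\tau_1^s]<\infty$ for every $s<\kappa$. Since $\kappa>2$ by assumption, I may pick some $s_0\in(2,\kappa)$; the elementary bound $(\ln t)^m\le C_{m,s_0}\,t^{s_0-2}$ for $t\ge 1$ then yields
\[
E_0^{(\alpha)}\!\left[\tau_1^{\,2}(\ln\tau_1)^m\right]\le C_{m,s_0}\,E_0^{(\alpha)}[\tau_1^{s_0}]<\infty
\]
for every $m>0$, in particular for some $m>2=1+1/\gamma$ with $\gamma=1$.

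Finally, the Dirichlet environment is i.i.d.\ by construction, $(T)_\gamma$ holds with $\gamma=1$ in direction $\ell$, and the moment condition $E_0^{(\alpha)}[\tau_1^{\,2}(\ln\tau_1)^m]<\infty$ for some $m>1+1/\gamma$ has just been verified. All hypotheses of Theorem~\ref{theo:quenchedFCLT} are therefore met, and its conclusion delivers the quenched functional CLT with deterministic, non-degenerate covariance matrix. The proof is essentially a bookkeeping combination of already-established results; the only substantive checkpoint is the use of Theorem~\ref{th:transience_dir} to promote ``Kalikow's direction'' into the hypothesis $d_\alpha\cdot\ell>0$ required by Corollary~\ref{integrability-Dirichlet}, and this presents no real obstacle.
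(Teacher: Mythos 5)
Your proposal is correct and follows essentially the same route as the paper, which obtains the theorem precisely by chaining Theorem~\ref{Kalikow} (giving $(T)_1$ in a direction $\ell$ with $d_\alpha\cdot\ell>0$), Corollary~\ref{integrability-Dirichlet} (moments of $\tau_1$ up to $\kappa>2$, hence $E_0^{(\alpha)}[\tau_1^2(\ln\tau_1)^m]<\infty$ for all $m$), and the general quenched FCLT of Theorem~\ref{theo:quenchedFCLT}. The small bookkeeping steps you spell out (the logarithmic moment bound and promoting Kalikow's direction to $d_\alpha\cdot\ell>0$ via Theorem~\ref{th:transience_dir}) are exactly the implicit content of the paper's deduction.
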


\section{Some sketches of proofs}
\label{sec:proofs}

In this section we sketch some proofs of Section~\ref{ss:results-reversal}, that use the time reversal property.
For Theorem~\ref{th:transience} and Theorem~\ref{th:invariant}, we do not optimize on the parameters which considerably simplifies the argument.
In Section~\ref{sub:optimization} we shortly explain how the optimization on the parameters is related to the max-flow min-cut theorem.

\subsection{Directional transience.}
\label{sub:proof_transience_dir}
Let us use the time reversal lemma~\ref{lem:reversal} in order to prove Corollary~\ref{cor:transience_dir}. 

We do the proof in $\Z^2$ to lighten notation; the general case is a straightforward generalization. 

\begin{figure}
\begin{center}
\includegraphics[width=9cm]{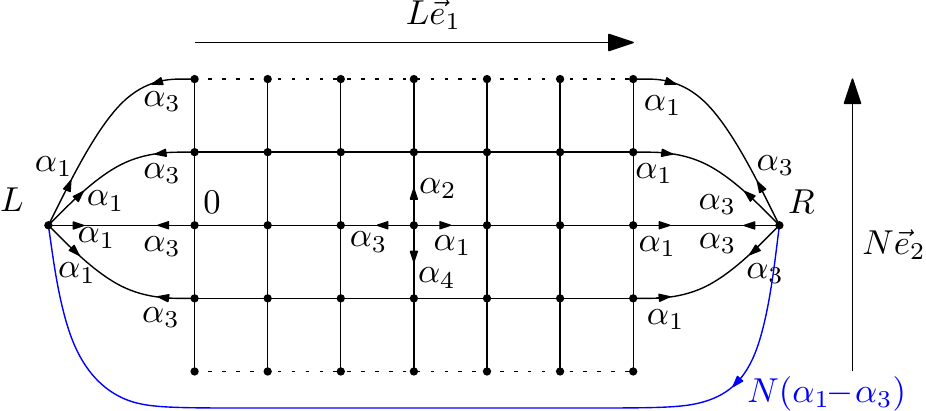}
\caption{Graph $G_{N,L}$ for the proof of directional transience in dimension~2 (Corollary~\ref{cor:transience_dir}). Vertices on top are identified with the corresponding vertices on bottom.}
\label{fig:transience_dir}
\end{center}
\end{figure}

Let $N,L\in\N$. We consider the finite graph $G_{N,L}$ defined by figure~\ref{fig:transience_dir}, endowed with the weights indicated on the figure. This is a horizontal cylinder (top and bottom lines of the figure are identified) of length~$L$ and circumference~$N$, together with two vertices $L$ and $R$ respectively corresponding to the left and right exits of the cylinder and a ``long'' edge $(R,L)$. The vertex $0$ belongs to the leftmost column of the cylinder. We first have, denoting respectively by $H$ and $H^+$ hitting and return times, 
\[P^{(\alpha)}_0(H_R<H_L)=P^{(\alpha)}_L(H_R<H^+_L),\]
due to vertical translation invariance of the graph and because, on the right hand side event, no return to $L$ occurs (so that the reinforcement of the first edge crossed doesn't affect the probability of the event). Then, using the existence of the long edge $(R,L)$, 
\begin{align*}
P^{(\alpha)}_L(H_R<H^+_L)
	& \ge P^{(\alpha)}_L(X_{H^+_L-1}=R).
\end{align*}
However, each sample of the right hand side event corresponds to stepping around a \emph{cycle} (viz., coming from $L$ and getting back to $L$ through the long edge). 
By applying Lemma~\ref{lem:reversal} (we have $\diverg\alpha=0$ in $G_N$) 
to each of these cycles, 
we get (see also~\eqref{returnproba})
\begin{align*}
P^{(\alpha)}_L(X_{H^+_L-1}=R)
	& = P^{(\alphach)}_L(X_1=R)
	 =\frac{N(\alpha_1-\alpha_{1+d})}{N\alpha_{1+d}+N(\alpha_1-\alpha_{1+d})}=1-\frac{\alpha_{1+d}}{\alpha_1}.
\end{align*}
Indeed, the reversal of the previous cycles are cycles starting by the long edge and coming back to $L$, but this second condition is almost surely satisfied since the graph is finite; and the explicit expression of the probability follows from the fact that the law of $X_1$ is given by the initial weights. In the end, we obtained 
\[P^{(\alpha)}_0(H_R<H_L)\ge 1-\frac{\alpha_{1+d}}{\alpha_1}.\]
By routine arguments, we may then take the limit as $N\to\infty$ and then $L\to\infty$ in order to get~\eqref{eqn:inegalite}. 

\begin{remark}
As shown in Section~\ref{sub:transience_dir}, there is actually equality in Corollary~\ref{cor:transience_dir}. The idea of the proof is used below in dimension 1 in the second proof of Proposition~\ref{lem:CL}; however, one needs the fact that directional transience holds almost-surely, which is nontrivial in higher dimensions (see~Section~\ref{sub:transience_dir}). As for the proof of transience in other directions, the main ingredients are essentially the same as above. 
\end{remark}

\begin{remark}
Once the almost sure transience is proved in sufficiently many directions, the existence of an asymptotic direction and its value follow from general arguments due to Simenhaus~\cite{simenhaus2007} on RWRE. Simenhaus indeed proves that if directional transience holds almost surely for a family of directions that span $\R^d$, then the walk has an asymptotic direction which is a constant. Assuming that transience holds for any $\ell\in\Q^d$ with $\ell\cdot d_\alpha>0$ (cf.~Proposition~\ref{prop:transience_dir_generale}), the direction then exists and may only be given by $d_\alpha$. 
\end{remark}

\subsection{Transience}
\label{sub:proof_transience}

Let us prove the first part of Theorem~\ref{th:transience}, i.e.~transience of the RWDE in $d\ge3$. The estimate (ii) for $d=2$ follows the same
ideas and is explained after.

Let $N\in\N$. We consider the graph $G_N$ with vertices $V_N=B(0,N)\cup\{\partial\}$ (where $\partial$ is an additional vertex and $B$ denotes a ball in $\Z^d$) and edges $\hat E_N=E_N\cup\{(\partial, 0)\}$ as in figure~\ref{fig:transience}, i.e.~of the following types.
The edges set $E_N$ is the set of
directed edges between neighboring vertices in $B(0,N)$ (as in $\Z^d$) and between $\partial$ and the vertices at the boundary of $B(0,N)$.
We also add to $E_N$ one special edge $(\partial,0)$. 

The weight $\alpha$ on $\Z^d$ naturally yields weights on $E_N$.
Let us endow the special edge $(\partial, 0)$ with a weight equal to 1.
With this choice we have clearly that on $\hat E_N$ 
$$
\diverg (\alpha)=\delta_\partial-\delta_0.
$$
(The divergence operator is defined in (\ref{divergence}).)
Consider now a unit flow $\theta:E_N\to \R_+$ from 0 to $\partial$ (i.e.\ $\dive(\theta)=\delta_0-\delta_\partial$) and assume that $0\le \theta\le 1$. 
(It is always possibly to find such a flow, cf.~below.) 
Extend $\theta$ by 0 on the special edge $(\partial, 0)$.
We consider the weights $\alpha'=\alpha+\theta$ which is clearly a flow with null divergence on $\hat E_N$.
\begin{figure}[h]
\begin{center}
\includegraphics[height=6cm]{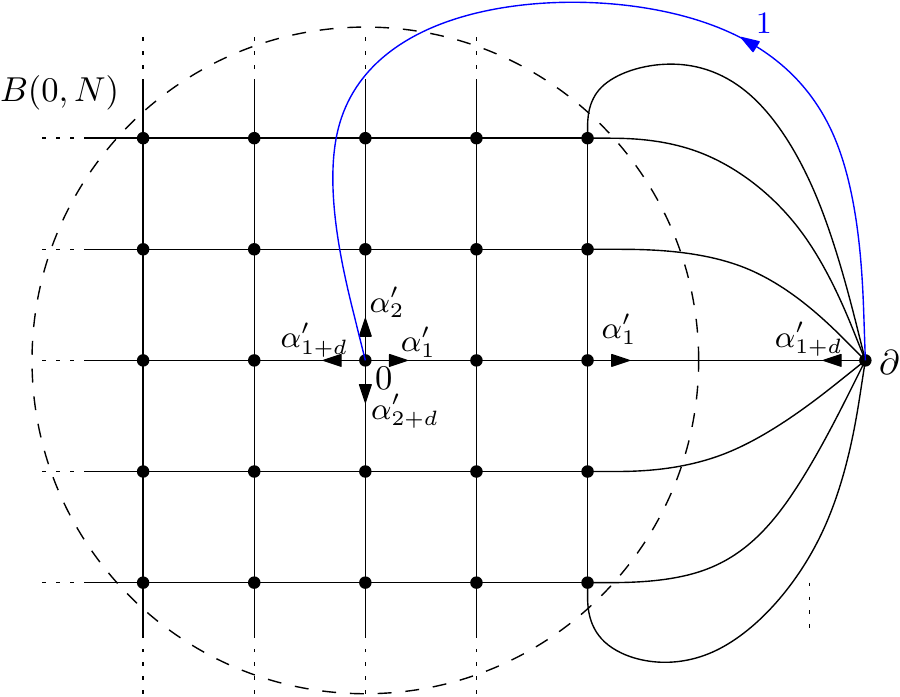}
\end{center}
\caption{Graph $G_N$ for the proof of transience.}
\label{fig:transience}
\end{figure}

\noindent Set as usual $H_\partial=\inf\{n\ge 0, \;\; X_n=\partial\}$ and $H_0^+=\inf\{n>0, \;\; X_n=0\}$.
We can now apply Lemma~\ref{lem:reversal} and its consequence (\ref{returnproba}): 
under the law $\P^{(\alpha+\theta)}$
\begin{align}\label{rrr}
P_{0,\w}
(H_\partial<H_0^+)
	& \ge P_{0,\w}
	(X_{H_0^+-1}=\partial)\\
	\nonumber
	& = P_{0,\check\w}
	(X_1=\partial)
	\\
	\nonumber
	&\sim \hbox{Beta}(1, \alpha_0).
\end{align}
In particular, we have
\begin{align}\label{rr}
P_{0}^{(\alpha+\theta)}
(H_\partial<H_0^+)
\ge
\frac1{1+\alpha_0}.
\end{align}
Provided $\theta$ comes from the restriction of a unit flow $\theta$ defined on the whole set $E$ of edges of $\Z^d$, letting $N\to\infty$ would give a positive probability of transience for the weights $\alpha+\theta$ without any condition on the dimension.

In order to get back to the weights $\alpha$, we use absolute continuity of Dirichlet measures (viz., $\frac{\d\P^{(\alpha)}}{\d\P^{(\alpha+\theta)}}=\frac{Z_{\alpha+\theta}}{Z_\alpha}\prod_e\omega_e^{-\theta_e}$, cf.~\eqref{eq:defZ}), and Hölder inequality for an arbitrary exponent $r>1$ (in the form $\E[UV^{-1}]\ge \E\big[U^{\frac1r}\big]^r\E\big[V^{\frac1{r-1}}\big]^{-(r-1)}$):
\begin{align*}P_0^{(\alpha)}(H_\partial<H_0^+)
	& =\mathbb E^{(\alpha+\theta)}[P_{0,\omega}(H_\partial<H^+_0)\prod_{e\in E_N} \omega_e^{-\theta_e}]\cdot \frac{Z_{\alpha+\theta}}{Z_\alpha}\\
	& \ge \E^{(\alpha+\theta)}[P_{0,\omega}(H_\partial<H^+_0)^{\frac1r}]^r\,\cdot\,\E^{(\alpha+\theta)}\Big[\prod_{e\in E_N} \omega_e^{\frac1{r-1}\theta_e}\Big]^{-(r-1)}\ \cdot\frac{Z_{\alpha+\theta}}{Z_\alpha}.
\end{align*}
The first expectation is greater that $P_0^{(\alpha+\theta)}(H_\partial<H_0^+)$ and thus than $\frac1{1+\alpha_0}$. The other factors are explicit and equal to
\[\Phi(\alpha,\theta,p)=\left(\frac{Z_{\alpha+\theta}}{Z_{\alpha+\frac r{r-1}\theta}}\right)^{r-1} \frac{Z_{\alpha+\theta}}{Z_\alpha}= \exp\bigg(\sum_{e\in E_N} \nu_r(\alpha_e,\theta_e)-\sum_{x\in V_N}\nu_r(\alpha_x,\theta_x)\bigg),\]
where
\[\nu_r(a,t)=r\ln\Gamma(a+t)-\ln\Gamma(a)-(r-1)\ln\Gamma\big(a+\frac r{r-1}t\big).\]
$\nu_r$ is a smooth function on $\{(a,t)\in\R^2:a>0,\, t>-a\}$ and one can check that for all $a>0$, 
$\nu_r(a,0)=0$ and $\partial_t \nu_r (a,0)=0$, hence there are $\eps,C_r>0$ such that $|\nu_r(a,t)|\le C_r t^2$ when $-\eps\le t\le2d$ and $a$ stays within a given compact subset 
of $(0,+\infty)$ containing all values of $\alpha_e$ and $\alpha_x$. Since $\theta_e\le 1$ and $\theta_x\le 2d$ for all $e,x$, 

\begin{align*}
P_0^{(\alpha)}(H_\partial<H_0^+)
	& \ge \frac1{(1+\alpha_0)^r} \exp\Big(-C_r\sum_{e\in E_N} \theta_e^2-C_r\sum_{x\in V_N}\theta_x^2\Big)\\
	& \ge \frac1{(1+\alpha_0)^r} \exp\Big(-C_r(1+2d)\sum_{e\in E_N}\theta_e^2\Big).
\end{align*}
The following is a simple application of Thomson's principle \cite[Chapter 2]{lyons-peres},
\begin{lemma}\label{flow}
There exists a unit flow $\theta$ on $E_N$ from 0 to $\partial$, such that $0\le\theta\le1$ and
$$
\sum_{e\in E_N} \theta_e^2 ={R_N},
$$
where $R_N$ is the electrical resistance between 0 and $B(0,N)^c$ for the network $\Z^d$ with unit resistance on the bonds.
\end{lemma}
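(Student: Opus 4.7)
The plan is a direct application of Thomson's principle. Let $h\colon V_N\to[0,1]$ denote the unique function that is harmonic on $B(0,N)\setminus\{0\}$ (with respect to the usual symmetric Laplacian on the undirected graph underlying $E_N$) and satisfies the boundary conditions $h(0)=1$ and $h(\partial)=0$. By the definition of effective resistance, the total electric current flowing out of $0$ is $I=\sum_{y\sim 0}(h(0)-h(y))=1/R_N$. I would then define
\[\theta_{(x,y)}=R_N\,\bigl(h(x)-h(y)\bigr)_+,\qquad (x,y)\in E_N,\]
so that $\theta\ge 0$ automatically, and for each unoriented bond $\{x,y\}$ only one of $\theta_{(x,y)},\theta_{(y,x)}$ is non-zero.

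First I would check that $\theta$ is a unit flow from $0$ to $\partial$. At any interior vertex $x\notin\{0,\partial\}$, harmonicity gives $\sum_{y\sim x}(h(x)-h(y))=0$, and separating this sum according to the sign of $h(x)-h(y)$ yields exactly Kirchhoff's node law $\sum_y\theta_{(x,y)}=\sum_y\theta_{(y,x)}$. At the source~$0$ the maximum principle forces $h(0)\ge h(y)$ for every neighbor $y$, so the net outflow equals $R_N\sum_{y\sim 0}(h(0)-h(y))=R_N\cdot I=1$.

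For the energy identity, summation by parts using harmonicity off $\{0,\partial\}$ yields the standard formula
\[\sum_{\{x,y\}}\bigl(h(x)-h(y)\bigr)^2= h(0)\cdot I - h(\partial)\cdot I = I = \frac{1}{R_N}.\]
Since for each unoriented bond only one of the two directed contributions is non-zero, it follows that $\sum_{e\in E_N}\theta_e^2 = R_N^2\cdot(1/R_N)=R_N$, as wanted. For the pointwise bound $\theta\le 1$, I would observe that the support of $\theta$ is acyclic: a directed cycle $(x_0,\ldots,x_n=x_0)$ with $\theta_{(x_i,x_{i+1})}>0$ for all $i$ would force $h(x_0)>h(x_1)>\cdots>h(x_n)=h(x_0)$, which is impossible. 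An acyclic non-negative flow of value $1$ from $0$ to $\partial$ admits a flow-path decomposition $\theta=\sum_P f_P\indic_P$ indexed by directed paths from $0$ to $\partial$ with $f_P\ge 0$ and $\sum_P f_P=1$, so $\theta_e\le\sum_{P\ni e} f_P\le 1$ on every edge.

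The only delicate step is the verification of flow conservation at boundary vertices of $B(0,N)$, whose neighbor set in the undirected graph underlying $E_N$ includes $\partial$; since $h$ is harmonic at such a vertex with this enlarged set of neighbors and $h(\partial)=0$, the same calculation goes through verbatim. Otherwise the argument is entirely standard: Thomson's principle produces the energy identity $\sum\theta_e^2=R_N$, while the uniform bound $\theta_e\le 1$ is a structural consequence of $\theta$ being an acyclic non-negative unit flow.
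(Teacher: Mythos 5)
Your proof is correct, and it builds the same object as the paper -- the unit current flow associated with the harmonic potential between $0$ and $\partial$ -- but it verifies its properties by hand where the paper argues by citation. The paper's proof simply invokes Thomson's principle (the energy-minimizing unit flow on the undirected graph has energy exactly $R_N$) and quotes Proposition 2.2 and Exercise 2.35 of Lyons--Peres for the pointwise bound $|\tilde\theta(e)|\le 1$, then orients the flow edge by edge according to its sign. You instead construct the flow explicitly as $\theta_{(x,y)}=R_N\bigl(h(x)-h(y)\bigr)_+$, check Kirchhoff's law from harmonicity, obtain the energy identity $\sum_e\theta_e^2=R_N$ by summation by parts (Green's identity), and -- this is the genuinely different ingredient -- prove $\theta_e\le 1$ structurally: the support of a potential-driven flow is acyclic, so the unit flow decomposes into source-to-sink paths with weights summing to $1$, giving the bound on every edge. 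What your route buys is self-containedness (no appeal to the optimality statement of Thomson's principle nor to the cited exercise for the bound), and the acyclicity/path-decomposition argument is a clean, reusable way to get $0\le\theta\le 1$ for any flow derived from a potential; what the paper's route buys is brevity and the fact that phrasing everything through Thomson's principle makes the later reuse of the construction (e.g.\ the analogous flow on the torus in the proof of the invariant-measure theorem) immediate. The only point both treatments gloss over equally is the identification of the effective resistance in the collapsed graph $G_N$ with the resistance between $0$ and $B(0,N)^c$ in $\Z^d$, so no gap is attributable to you.
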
 
\begin{proof}
Consider the non-directed graph $(V_N, \tilde E_N)$ naturally associated with $(V_N, E_N)$ (without the special edge).
Thomson's principle \cite[Chapter 2]{lyons-peres} tells that
\begin{eqnarray}\label{min2}
R_N=\inf_{\tilde\theta} \|\tilde\theta\|^2_{L^2(\tilde E_N)},
\end{eqnarray}
where the sum runs on unit flows from 0 to $\partial$ on the non-directed graph: 
a unit flow on the non-directed graph is a signed function $\tilde\theta:\tilde E_N\to \R$
such that $\dive(\tilde\theta)=\delta_x-\delta_y$ (an arbitrary orientation on the edges must be chosen, cf.~\cite[Chapter 2]{lyons-peres}). 
From Proposition 2.2 and Exercise 2.35 of \cite{lyons-peres}, the flow that minimizes the $L^2$ norm satisfies $\vert \tilde\theta(e)\vert\le 1$.
Hence we can define $\theta$ on directed edges $E_N$ from the minimizer of (\ref{min2}), by assigning for every non-directed edge $e$ the value
$\vert\tilde\theta(e)\vert$ to the edge oriented according to the sign of $\tilde\theta(e)$ and $0$ to the reversed edge.
\end{proof}
\begin{remark}
For a nice construction of $\theta$ in $\Z^3$ using P\'olya urns, the reader is referred to~\cite{levin-peres2010}.
\end{remark}
Hence we get the inequality
\begin{align*}
P_0^{(\alpha)}(H_\partial<H_0^+)\ge 
\frac1{(1+\alpha_0)^r} \exp\Big(-C_r(1+2d)R_N\Big).
\end{align*}
In dimension $d\ge 3$, we know that $\sup_N R_N=R(0,\infty)<+\infty$ where  $R(0,\infty)$ is the electrical resistance between $0$ and $\infty$ for unit
resistances on bonds.
Letting $N\to\infty$ then yields 
$P_0^{(\alpha)}(H_0^+=\infty)\ge c>0$, hence $P_0^{(\alpha)}(|X|\to\infty)\ge c>0$. Finally, since, by ergodicity, $\omega\mapsto P_{0,\omega}(|X|\to\infty)$ is constant $\P^{(\alpha)}$-a.s., and on the other hand it is always equal to either 0 or 1, we conclude that this probability equals 1, $\P^{(\alpha)}$-a.s., which concludes the proof for
$d\ge 3$.

The 2-dimensional statement of Theorem~\ref{th:transience} is a consequence of the same proof as above, with the only differences that the special edge is given a small weight $\gamma_N$ instead of 1 so that we consider the divergence-free weight $\alpha'=\alpha+\gamma_N\theta$ on $G_N$.
This easily implies, following the same computation as in (\ref{rr}), that
\begin{align*}
P_0^{(\alpha+\gamma_N\theta)}(H_\partial<H_0^+)=\frac{\gamma_N}{\gamma_N+\alpha_0}.
\end{align*}
It is well-known that in dimension $d=2$, $R_N\sim \log N$, 	
 hence the lower bound becomes
	\[P_0^{(\alpha)}(H_N<H_0^+)\ge\left(\frac{\gamma_N}{\gamma_N+\alpha_0}\right)^r \exp(-5 C_r (\gamma_N)^2\ln N).\] 
	Taking $r=1+2\eps$ and $\gamma_N=(\ln N)^{-1/2}$ gives the lower bound of the theorem. 	

\begin{remark}
If we assume only $\diverg_x\alpha\ge0$ for all vertices $x$ and $0<c\le \alpha_e\le C$ for all edges $e$, then the above proof would work as well, provided we introduce new edges in $G_N$, from $\partial$ to every $x$ in $V$, with respective weight $\diverg_x\alpha$ (so that the new weight has divergence $\delta_0-\delta_\partial$ before addition of $\theta$). Due to orientation of these new edges (\emph{from} $\partial$ to $x$), they do not play a role in the probabilities involved in the proof. In this case, the lower bound becomes $\frac{1+\diverg_0(\alpha)}{1+\alphach_0+\diverg_0(\alpha)}=1-\frac{\alphach_0}{1+\alpha_0}$. Note that a positive divergence implies that the sum of the weights of edges exiting a finite subset is greater than the sum of those entering, which leads to an intuitive tendency to push the walk out of finite subsets. 
\end{remark}

The bound on the moments of the Green function are slightly more tricky, but 
if we do not optimize on the parameters it is rather easy to obtain integrability
of the Green function for $s<\tilde \kappa$ where 
$$\tilde \kappa=\min(\alpha_i,\; i=1,\ldots, 2d).$$
Consider the Green function $G_\w^N(0,0)$ of the quenched Markov chain in environment $\w$, killed at the exit time
of $B(0,N)$. We clearly have $G_\w^N(0,0)=1/P_{0,\w}(H_\partial<H_0^+)$.
Consider now the graph $(V_N, \hat E_N)$ as before but endow the special edge $(\partial , 0)$ with a weight $\gamma$.
Consider the weights $\alpha'=\alpha +\gamma\theta$. Proceeding as in \eqref{rrr},
under $\P^{(\alpha')}$, $P_{0,\w}(H_\partial<H_0^+)$ is stochastically bounded from below by the law
${\rm Beta}(\gamma, \alpha_0)$. Apply H\"older inequality with $r,q$ such that ${1\over r}+{1\over q}=1$, (in a direct way)
\begin{eqnarray*}
\E^{(\alpha)}\left[ G^N_\w(0,0)^s\right]&=&
 {Z_{\alpha+p\theta}\over Z_{\alpha}}\E^{(\alpha+\gamma\theta)}\left[ \left({1\over P_{0,\w}(H_\partial<H_0^+)}\right)^s\w^{-\gamma\theta}\right]
\\
&\le &
{Z_{\alpha+p\theta}\over Z_{\alpha}}\E^{(\alpha+\gamma\theta)}\left[ \left({1\over P_{0,\w}(H_\partial<H_0^+)}\right)^{rs}\right]^{1/r}
\E^{(\alpha+\gamma\theta)}\left[ \w^{-\gamma q\theta}\right]^{1/q}
\\
&\le &
{\left(Z_{\alpha+p\theta}\right)^{1-{1\over q}} \left(Z_{\alpha+(1-q)\theta}\right)^{{1\over q}} \over Z_{\alpha}} 
\E\left[ Y^{-rs}\right]^{1/r},
\end{eqnarray*}
where $Y$ is a random variable with distribution ${\rm Beta}(\gamma, \alpha_0)$.
By (\ref{moments})  $Z_{\alpha+(1-q)\theta}$ is finite if and only if 
\begin{eqnarray}\label{ineq-flow}
\gamma(q-1)\theta(e)<\alpha_e, \;\;\; \forall e\in E_N.
\end{eqnarray}
Since $\theta\le 1$, we can take $\gamma(q-1)<\tilde\kappa$ which means, in terms of $r$,
$r>{\tilde\kappa+\gamma\over \tilde \kappa}$.
Now, the left hand side is integrable if $rs<\gamma$ since  $P_{0,\w}(H_\partial<H_0^+)$ has law
${\rm Beta}(\gamma, \alpha_0)$. This means that we can take 
\begin{align}\label{s}
s<{\gamma\tilde\kappa\over \tilde\kappa+\gamma}.
\end{align}
For this choice of $s$, we can make a similar second order estimate of the Gamma terms and prove
that
$$
\E^{(\alpha)}\left[ G^N_\w(0,0)^s\right]\le \exp(C \|\theta\|^2).
$$
Hence for $s$ satisfying (\ref{s}), we have that $\sup_N \E^{(\alpha)}\left[ G^N_\w(0,0)^s\right]<+\infty$.
Taking $\gamma$ very large we can take $s$ up to $\tilde \kappa$.
In Section~\ref{sub:optimization}, we shortly explain how to get bounds on the moments up to
$\kappa$ instead of $\tilde \kappa$.

\subsection{Invariant measure viewed from the particle}
\label{ss_invariant_proof}

In this section we give a sketch of proof of the existence of an invariant measure of the process of the environment viewed from
the particle, Theorem \ref{th:invariant}, in the case where the weights $(\alpha_i)$ are sufficiently large, 
in fact when $\alpha_i >1$ for all $i$.  
This simplifies the argument, since in this case it is not necessary to optimize on the parameters
using the max-flow min-cut theorem.
Like for transience, the proof uses the time reversal property and the existence of unit flows with bounded $L^2$ norms in dimension
$d\ge 3$, this is
where the restriction on the dimension enters.

Following \cite{Lawler-82}, we first restrict to a large torus.
When $N\in \N^*$, we denote by $T_N=(\Z/N\Z)^d$
the $d$-dimensional torus of size $N$. We denote by
$G_N=(T_N,E_N)$ the associated directed graph image of the graph
$G=(\Z^d, E)$ by projection on the torus. 
We denote by $\W_N$ the space of environments on the torus $G_N$ (following Section~\ref{sec:rwde}).
We denote by $\P_N^{(\alpha)}$ the Dirichlet law on the torus of size $N$, with weights
$(\alpha_{(x,x+e_i)}=\alpha_i)_{i=1,\cdots, 2d}$.

For $\w$ in
$\W_N$ we denote by $(\pi_N^\w(x))_{x\in T_N}$ the invariant
probability measure of the Markov chain on $T_N$ with transition probabilities $\w$ (it is unique since the environments are
elliptic). Let
$$
f_N(\w)= N^d \pi_N^\w(0),
$$
and
$$
\Q_N^{(\alpha)}=f_N\cdot \P_N^{(\alpha)}.
$$
Thanks to translation invariance, $\Q_N^{(\alpha)}$ is a probability measure on
$\W_N$.
The following lemma, which implies Theorem \ref{th:invariant}, is proved in \cite{sabot2013particle}
\begin{lemma}\label{main-lemma}
Let $d\ge 3$. For all $p\in [1,\kappa[$
$$
\sup_{N\in \N} \| f_N\|_{L^p(\P^{(\alpha)}_N)} <\infty.
$$
\end{lemma}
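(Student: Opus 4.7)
The plan is to bound $\|f_N\|_{L^p(\P_N^{(\alpha)})}$ uniformly in $N$ by combining the time reversal identity of Lemma~\ref{lem:reversal} with a flow and H\"older argument, in the spirit of the transience proof of Section~\ref{sub:proof_transience}. Since the weights on $T_N$ are translation invariant we have $\diverg(\alpha) \equiv 0$, so Lemma~\ref{lem:reversal} applies: under $\P_N^{(\alpha)}$, the reversed environment $\check\omega$ has law $\P_N^{(\check\alpha)}$, which preserves the independence of the transitions at each vertex and is the key input that makes the whole machinery work.

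The first and hardest step is to obtain a pointwise quenched bound of the form
\[
N^d\, \pi_N^\omega(0) \le C \prod_{e \in E_N} \omega_e^{-\theta_e}
\]
for a suitable unit flow $\theta$ on $T_N$ with source $\delta_0$ and sink $N^{-d}\sum_y \delta_y$, so that $\diverg(\theta) = \delta_0 - N^{-d}\sum_y \delta_y$. The natural route is either to exploit the matrix-tree representation of $\pi_N^\omega$ underlying Lemma~\ref{chgt-variables}, bounding the ratio of spanning-tree sums through a path decomposition supported on $\theta$, or to use $\pi_N^\omega(0) = 1/E_{0,\omega}[T_0^+]$ together with a Dirichlet-form / electric network lower bound on the expected return time, time reversal being what turns the relevant hitting probabilities into quantities involving the independent reversed transitions. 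In either approach, the flow $\theta$ must be chosen by Thomson's principle as in Lemma~\ref{flow}: transience of $\Z^d$ for $d \ge 3$ is precisely what guarantees that $\sum_e \theta_e^2$ stays bounded uniformly in $N$.

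Once the pointwise bound is in place, the conclusion follows the transience-proof template. By the Dirichlet moment formula~\eqref{moments},
\[
E_N^{(\alpha)}\bigl[f_N^p\bigr] \le C^p\, \frac{Z_{\alpha - p\theta}}{Z_\alpha},
\]
which is finite as long as $p\theta_e < \alpha_e$ for every edge $e$; a second-order Taylor expansion of $\log\Gamma$ around each $\alpha_e$, together with $\sum_e \theta_e^2 = O(1)$, keeps this ratio uniformly bounded in $N$. A naive choice of $\theta$ subject to $\theta_e \le 1$ gives the estimate only for $p < \tilde\kappa = \min_i \alpha_i$. To reach the sharp range $p < \kappa$, one invokes the max-flow min-cut optimization discussed in Section~\ref{sub:optimization}: the edge-wise constraint $p\theta_e < \alpha_e$ is relaxed to the cut-wise constraint $p\sum_{e\in\partial_+(K)}\theta_e < \sum_{e\in\partial_+(K)}\alpha_e$ for every finite $K \ni 0$, and the infimum of the right-hand side is exactly $\kappa$ by~\eqref{eq:def_kappa}. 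Existence of a unit flow with these properties is then a max-flow min-cut problem whose feasibility is equivalent to $p < \kappa$. The central obstacle is the pointwise flow inequality of the first step, in which the time reversal lemma actually does its work; the subsequent H\"older/Thomson/moment-formula manipulation, and even the max-flow min-cut optimization, is by now a standard recipe in this subject.
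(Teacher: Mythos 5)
There is a genuine gap, and it sits exactly at the step you call ``the subsequent H\"older/Thomson/moment-formula manipulation, and even the max-flow min-cut optimization, is by now a standard recipe''. Your pointwise bound $N^d\pi_N^\omega(0)\le C\prod_e\omega_e^{-\theta_e}$ does hold (in fact with $C=1$: by the arithmetic--geometric inequality $f_N\le\prod_y(\pi_N^\omega(0)/\pi_N^\omega(y))^{1/N^d}=\pi_N^{\diverg\theta}=\check\omega^{\check\theta}/\omega^{\theta}\le\omega^{-\theta}$, since $\check\omega_e\le1$ and $\check\theta\ge0$; no matrix-tree or return-time argument is needed, and note this step uses no time reversal at all). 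But the bound obtained by discarding the factor $\check\omega^{\check\theta}$ is far too lossy, and the ensuing moment estimate fails: by \eqref{moments}, $\log\big(Z_{\alpha-p\theta}/Z_\alpha\big)$ has a \emph{nonvanishing first-order term} in $\theta$, namely $p\sum_e\theta_e\big(\Psi(\alpha_x)-\Psi(\alpha_e)\big)>0$ (equivalently, $\E^{(\alpha)}[\prod_e\omega_e^{-p\theta_e}]\ge \exp\big(c\,p\sum_e\theta_e\big)$ because $\E[\ln(1/\omega_e)]$ is a positive constant). The second-order Taylor expansion controls nothing here, because $\sum_e\theta_e$ is an $L^1$-norm of the flow, of order $N$ for a unit flow spreading mass from $0$ to the uniform measure on $T_N$; so your bound on $E^{(\alpha)}_N[f_N^p]$ blows up exponentially in $N$ no matter how $\theta$ is optimized. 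Smallness of $\sum_e\theta_e^2$ (Thomson, $d\ge3$) cannot rescue a first-order divergence.

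The paper's proof avoids this precisely by \emph{not} throwing away the reversed-environment factor: it keeps $f_N^p\le \check\omega^{p\check\theta}/\omega^{p\theta}$, applies H\"older to split the two factors, and then uses Lemma~\ref{lem:reversal} --- this is where time reversal actually works, not in the pointwise step --- to compute $\E^{(\alpha)}[\check\omega^{pr\check\theta}]=\E^{(\check\alpha)}[\omega^{pr\check\theta}]$ by the moment formula. After relabelling $e\to\check e$, the Gamma terms combine into $\frac1r\ln\Gamma(\alpha_e+prt)+\frac1q\ln\Gamma(\alpha_e-pqt)-\ln\Gamma(\alpha_e)$, whose first-order terms cancel identically, leaving only $O(t^2)$ contributions bounded by $C\|\theta\|_{L^2(E_N)}^2\le C\,c(d)$; this is the only place where $d\ge3$ and Thomson's principle enter. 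So the compensating positive moments of $\check\omega$ are not a refinement but the heart of the proof, and your outline, which reduces everything to negative moments of $\omega$ alone, cannot be repaired without reinstating them. (Secondarily, your description of the passage from $\tilde\kappa$ to $\kappa$ is also off: with capacities $\alpha_e$ the min-cut at $0$ is $\sum_i\alpha_i$, not $\kappa$; reaching $\kappa$ requires the extra device of injecting $1=\sum_i\omega(0,e_i)$ and surgery on the flows to retain the $L^2$ bound, as explained in Section~\ref{sub:optimization} --- but this is a refinement the paper's own sketch also omits, whereas the first issue is fatal to the argument as proposed.)
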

It is standard that such an estimate implies Theorem \ref{th:invariant}, it is done for example
in \cite{sznitman-ten} in a very similar context, and precisely in \cite{sabot2013particle}.
We will sketch the proof of this lemma only for $p\in [1, \tilde \kappa)$, where 
$$
\tilde \kappa =\inf\{ \alpha_i,\; i=1,\ldots,2d\}.
$$
 It implies the existence of an absolutely continuous invariant measure when $\tilde \kappa>1$,
  which is in $L^p(\P^{(\alpha)})$ for
 $p< \tilde \kappa$. 
 \begin{proof} (Sketch of proof of Lemma~\ref{main-lemma}). 
 
\noindent {\bf Step1.} Let $(\w_{x,y})_{x\sim y}$ be in $\W_N$. Recall that the
time-reversed environment is defined by
$$
\check \omega_{x,y}=\pi_N^\w(y) \w_{y,x}{1\over \pi_N^\w(x)},
$$
for $x$, $y$ in $T_N$, $x\sim y$. At each point $x\in T_N$
$$
\sum_{\ue=x}\alpha(e)=\sum_{\oe=x}\alpha(e) =\sum_{j=1}^{2d} \alpha_j,
$$
It implies by  Lemma~\ref{lem:reversal}  that if $\w$ is distributed according to $\P_N^{(\alpha)}$, 
then $\check \w$ is distributed according to
$\P_N^{(\check \alpha)}$ where $\check \alpha$ are the parameters obtained by central symmetry from $\w$,
i.e.\ we have with $\check \alpha_{x,x-e_i}=\alpha_{x-e_i,x}=\alpha_i$. 

Let $p$ be a real, $ p\ge 1$, then
\begin{eqnarray}\label{inequality}
(f_N)^p=
(N^{d}\pi_N^\w(0))^p
= \left({\pi_N^\w(0)\over {1\over
N^d}\sum_{y\in T_N} \pi_N^\w(y)}\right)^p
\le  \prod_{y\in T_N} \left({\pi_N^\w(0)\over
\pi_N^\w(y)}\right)^{p/N^d},
\end{eqnarray}
where in the last inequality we used the arithmetico-geometric
inequality.
For $\theta:E_N\rightarrow \R_+$, we define $\check
\theta$ by
$$
\check \theta_{(x,y)}=\theta_{(y,x)}, \;\;\; \forall x\sim y.
$$
For any two functions $\gamma$ and $\beta$ on $E_N$, (resp. on $T_N$) we write 
$$\gamma^\beta:=\prod_{e\in E_N}
\gamma(e)^{\beta(e)} \;\;\; (\hbox{resp.} \;  \gamma^\beta:= \prod_{x\in T_N}
\gamma(x)^{\beta(x)}\; ).
$$
The following simple computation is important:
\begin{eqnarray}
\nonumber
{\check\w^{\check\theta}\over \w^{\theta}}&=& 
\prod_{e\in E_N} {\left( \w(e)\pi_N(\ue)\pi_N(\oe)^{-1}\right)^{\theta(e)}\over 
\w_e^{\theta(e)}}\,
\\
\nonumber 
&=&\prod_{x\in T_N} \pi_N(x)^{\sum_{e,\; \ue=x}\theta(e)-\sum_{e,\oe=x} \theta(e)} 
\\
\label{quotient} 
&=& \pi_N^{\diverg (\theta)}.
\end{eqnarray}
 Hence, for all $\theta:E_N\to \R_+$
such that
\begin{eqnarray}\label{dive-theta}
\dive (\theta)={1\over N^d}\sum_{y\in
T_N}(\delta_0-\delta_y),
\end{eqnarray}
we have using (\ref{inequality}) and (\ref{quotient})
\begin{eqnarray}\label{fN}
f_N^p&\le&
{\check\w^{p\check\theta}\over \w^{p\theta}}.
\end{eqnarray}

\noindent {\bf Step 2.}
We will use the following fact.
\begin{proposition}
There exists a constant $c(d)$ such that for any $N$, for any $x$ and $y$ in $T_N$, there exists
a function $\theta^{(x,y)}:\E_N\to \R_+$ such that
\begin{align*}
&\dive(\theta)= \delta_x-\delta_y,
\\
&0\le \theta^{(x,y)}(e)\le 1, \;\;\; \forall e\in E_N,\\
\text{and }&\sum_{e\in E_N} \left(\theta^{(x,y)}(e)\right)^2\le c(d).
\end{align*} 
\end{proposition}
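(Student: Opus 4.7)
The plan is to realize $\theta^{(x,y)}$ as (essentially) the minimum-energy unit current flow on the undirected version of $T_N$ with unit conductances, after reorientation. Specifically, I would view $T_N$ as the undirected graph $\bar T_N$ obtained by identifying opposite directed edges, with unit conductance on each edge, and fix an arbitrary reference orientation. By Thomson's principle, there exists a unique unit signed flow $\tilde\theta$ from $x$ to $y$ on $\bar T_N$ minimizing the energy $\|\tilde\theta\|_2^2$, and this minimum equals the effective resistance $R_N(x,y)$ between $x$ and $y$ in $\bar T_N$. By Proposition~2.2 and Exercise~2.35 of \cite{lyons-peres} (already used in Lemma~\ref{flow}), the minimizer satisfies $|\tilde\theta(e)|\le 1$ on every edge. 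I then produce $\theta^{(x,y)}:E_N\to\R_+$ by, for each undirected edge $e$, assigning the value $|\tilde\theta(e)|$ to the directed edge agreeing with the sign of $\tilde\theta(e)$ and $0$ to its reverse. This $\theta^{(x,y)}$ automatically satisfies $\dive(\theta^{(x,y)})=\delta_x-\delta_y$, the pointwise bound $0\le\theta^{(x,y)}\le 1$, and $\sum_{e\in E_N}(\theta^{(x,y)}(e))^2=\|\tilde\theta\|_2^2=R_N(x,y)$.

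The core task is then to show the uniform bound $R_N(x,y)\le c(d)$, independent of $N$ and $x,y\in T_N$, in dimension $d\ge 3$. I would use the spectral representation
\[
R_N(x,y)\;=\;\sum_{k\ne 0}\frac{|\varphi_k(x)-\varphi_k(y)|^2}{\lambda_k},
\]
where $\varphi_k(x)=N^{-d/2}e^{2\pi i k\cdot x/N}$ are the normalized Fourier eigenfunctions of the discrete Laplacian on $T_N$ indexed by $k\in\{0,\ldots,N-1\}^d$, with eigenvalues $\lambda_k=\sum_{i=1}^d 2(1-\cos(2\pi k_i/N))$. Bounding $|\varphi_k(x)-\varphi_k(y)|^2\le 4N^{-d}$ and $\lambda_k\ge c_0|k^\star|^2/N^2$, where $k^\star\in[-N/2,N/2]^d$ denotes the canonical representative of $k$ mod $N$, yields
\[
R_N(x,y)\;\le\;\frac{C}{N^{d-2}}\sum_{0\ne k^\star\in\{-N/2,\ldots,N/2\}^d}\frac{1}{|k^\star|^2},
\]
and this last sum is of order $N^{d-2}$ as soon as $d\ge 3$ (by comparison with $\int_1^N r^{d-3}\,dr$), giving the required $N$-independent bound.

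The main obstacle, and the reason the statement is tailored to $d\ge 3$, is exactly this uniform resistance estimate: in $d=2$ the very same spectral sum diverges like $\log N$, consistently with the appearance of the $\log N$ factor in Theorem~\ref{th:transience}~(ii) and with the recurrent behavior of simple random walk. Once the resistance bound is in hand, constructing $\tilde\theta$ by convex minimization and extracting $\theta^{(x,y)}$ by reorientation is routine bookkeeping.
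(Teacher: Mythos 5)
Your argument is correct and follows essentially the same route as the paper: construct the minimal-energy unit flow via Thomson's principle, use the $|\tilde\theta(e)|\le 1$ property of the minimizer to reorient it into a nonnegative flow on directed edges, and conclude from the uniform boundedness of the effective resistance $R_N(x,y)$ on the torus in dimension $d\ge 3$. The only difference is that the paper simply cites this resistance bound as well known (referring to \cite{sabot2013particle} for an explicit construction), whereas you supply a correct spectral (Fourier) proof of it, which is a welcome but inessential addition.
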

\begin{proof}
Proceeding as in lemma \ref{flow} we can construct a unit flow from $x$ to $y$ such that
$0\le\theta\le 1$ and such that $\| \theta\|^2_{L^2(E_N)}=R_N(x,y)$ where $R_N(x,y)$ is the electrical resistance between $x$ and $y$ 
for the
torus network with unit conductances.  It is well-known that in dimension $d\ge 3$, there exists a constant $c(d)$ such that
$R_N(x,y)\le c(d)$ for all $N$ and all $x,y$ in $T_N$.
In \cite{sabot2013particle} a proof with an explicit construction is given since more precise information are necessary.
\end{proof}
From the previous proposition, we can construct
\begin{eqnarray}\label{theta}
\theta ={1\over N^d}\sum_{y\in T_N} \theta^{(0,y)}.
\end{eqnarray}
Clearly $\| \theta\|^2_{L^2(E_N)}\le c(d)$, $0\le \theta(e)\le 1$ for all edge $e\in E_N$, and 
\begin{eqnarray}\label{divetheta}
\dive (\theta)={1\over N^d}\sum_{y\in
T_N}(\delta_0-\delta_y).
\end{eqnarray}
Assume that $\tilde \kappa>1$, and $1<p<\tilde \kappa$.
Let $r,q$ be positive reals such that ${1\over r}+{1\over q}=1$
and $pq<\tilde\kappa$. Using (\ref{fN}), H\"older inequality and Lemma~\ref{lem:reversal} we get for $\theta$ defined
by (\ref{theta})
\begin{eqnarray}
\nonumber
&&\E^{(\alpha)} \left[f_N^p\right] 
\\
\nonumber
&\le & \E^{(\alpha)} \bigg[ {\check\w^{p\check\theta}\over \w^{p\theta}}\bigg]
\\
\nonumber
&\le & \E^{(\alpha)} \left[\check\w^{pr\check\theta}\right]^{1/r}  \E^{(\alpha)} \left[\w^{-pq\theta}\right]^{1/q}
\\
\nonumber
&=&
 \E^{(\check \alpha)} \left[\w^{pr\check\theta}\right]^{1/r}  \E^{(\alpha)} \left[\w^{-pq\theta}\right]^{1/q}
 \\
 \label{gam}
 &=&
 \left( {\prod_{e \in E_N} \Gamma(\check\alpha_e+pr\check\theta(e))\over \prod_{x\in T_N} 
 \Gamma(\alpha+pr\check\theta(x))}\right)^{1/r}
 \!\!\left( {\prod_{e \in E_N} \Gamma(\alpha_e-pq\theta(e))\over \prod_{x\in T_N} \Gamma(\alpha-pq\theta(x))}\right) ^{1/q}
 \!\!\left( \prod_{x\in T_N} \Gamma(\alpha) \over {\prod_{e \in E_N} \Gamma(\alpha_e)}\right),
\end{eqnarray}
where in the last expression we write $\alpha=\sum_{i=1}^{2d} \alpha_i$, 
and $\theta(x)=\sum_{i=1}^{2d} \theta_{(x,x+e_i)}$,  $\check\theta(x)=\sum_{i=1}^{2d} \check\theta_{(x,x+e_i)}$.
Remark that we need that
\begin{eqnarray}\label{cond-theta}
pq\theta(e)<\alpha_e, \;\;\;\forall e\in E_N,
\end{eqnarray} 
for the expectation in the middle term to be finite.
Since $pq\le \tilde\kappa$ and $0\le \theta(e)\le 1$, 
we have $\alpha_e-pq\theta(e)>(\tilde\kappa-pq)>0$ and $\alpha-pq\theta(x)>(\tilde\kappa-pq)>0$.
Hence the right hand side term is well-defined and finite.
Remark from (\ref{divetheta}), that 
$$
\check\theta(x) =\theta(x) -\indic_{\{x=0\}}+{1\over N^d}.
$$
Change now $e$ in $\check e$ in the product in (\ref{gam}), it gives
\begin{eqnarray*}
\nonumber
&&\E^{(\alpha)} \left[f_N^p\right] 
\\
&\le &
 {\prod_{e \in E_N} \Gamma(\alpha_e+pr\theta(e))^{1/r}\Gamma(\alpha_e-pq\theta(e))^{1/q}  \Gamma(\alpha_e)^{-1}
 \over \prod_{x\in T_N} 
 \Gamma(\alpha+pr(\theta(x)-\indic_{\{x=0\}}+{1\over N^d}))^{1/r}\Gamma(\alpha-pq\theta(x))^{1/q} \Gamma(\alpha)^{-1}}.
\end{eqnarray*}
Consider the numerator, it can be written
$$
\exp\left( \sum_{e\in E_N} \eta(\alpha_e, \theta_e)\right)
$$
with
\begin{eqnarray*}
\eta(a,t)= {1\over r} \ln \Gamma(a+pr t )+ {1\over q}\ln \Gamma(a-pq t)-\ln \Gamma(a).
\end{eqnarray*}
For small $t$, the 0th and 1st order vanish and we have
$$
\vert \eta(a,t) \vert \le O(t^2),
$$
for $a, t$ in a compact and $a- pqt>\epsilon$ for some $\epsilon >0$.
It implies that there is a constant $C>0$ such that
$$
\exp\left( \sum_{e\in E_N} \eta(\alpha_e, \theta_e)\right)\le \exp(C \| \theta\|^2_{L^2(E_N)})\le \exp (C c(d)).
$$
The argument for the denominator is similar, the extra term ${1\over N^d}$ just gives an extra constant and it can be 
bounded from below by $\exp(-C(1+ \| \theta\|^2))$ for some constant $C>0$.
\end{proof}

\subsection{Optimization on the parameters}
\label{sub:optimization}
In Sections~\ref{sub:proof_transience} and \ref{ss_invariant_proof}, we gave sketches of proof of the theorems under weaker conditions on the parameters.
We remark that the constraint on the parameters appears after the H\"older inequality, in equalities (\ref{ineq-flow}) for the estimates of the moments 
of the Green function and (\ref{cond-theta}) for the proof of the invariant measure. These inequalities are both of the
following type: we want to find $\gamma$ as large as possible such that 
$\gamma\theta(e)<\alpha_e$ for all edge $e$ for a unit flow from a point $x$ to another point $y$
(these are $0$ and $\partial$ in (\ref{ineq-flow}), and $0$ and a point $y$ in the torus in (\ref{cond-theta})).
Hence, we need to construct some flows that minimize the infimum
$\min(\theta(e)/\alpha_e)$.
This is exactly the content of the Max-Flow Min-Cut theorem that we recall below. 

Let $G=(V,E)$ be a directed connected graph. 
A flow from $x$ to $y$ is a function $\theta : E\to \R_+$ such that
$\dive(\theta)=\gamma(\delta_x-\delta_y)$ for some real $\gamma>0$.
The strength of $\theta$ is by definition $\gamma=\dive(\theta)(x)$.

Recall that a cutset separating $x$ from $y$ is a subset $S\subset E$ such that any directed
simple path from $x$ to $y$ contains at least one edge of $S$.
The well-known Max-Flow Min-Cut theorem says that the maximum flow
equals the minimal cutset sum (cf.~\cite{Ford}). We give here a
version for countable graphs (\cite{lyons-peres}, Theorem 2.19, cf.~also
\cite{Aharoni}).
\begin{proposition}\label{maxflow}
Let $(c(e))_{e\in E}$ be a
family of non-negative reals, called the capacities.
A flow $\theta$ from $x$ to $y$ is called
compatible with the capacities $(c(e))_{e\in E}$ if
$$
\theta(e)\le c(e), \;\;\; \forall e\in E.
$$
The maximum compatible flow equals the infimum of the cutset sum,
i.e.
\begin{eqnarray*}
\nonumber
&&\max\{\strength(\theta)\,:\, \hbox{$\theta$ is a flow from
$x$ to $y$ compatible with $(c(e))$}\}
\\
\label{Mincut}
&=&\inf\{c(S)\,:\,\hbox{$S$ is a cutset separating $x$ from
$y$}\},
\end{eqnarray*}
where
$$
c(S)=\sum_{e\in S} c(e).
$$
\end{proposition}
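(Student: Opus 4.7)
\textbf{Proof plan for Proposition \ref{maxflow} (Max-Flow Min-Cut).}

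The easy inequality $\max\{\strength(\theta)\} \le \inf\{c(S)\}$ is immediate: if $\theta$ is any compatible flow from $x$ to $y$ and $S$ is any cutset separating $x$ from $y$, then by decomposing $\theta$ along directed paths (or by a telescoping divergence argument applied to the set $A$ of vertices reachable from $x$ in $G\setminus S$), every unit of flow from $x$ to $y$ must traverse at least one edge of $S$, so $\strength(\theta) \le \sum_{e \in S} \theta(e) \le c(S)$.

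The nontrivial direction is the existence of a flow achieving the infimum. First I would handle the finite case by the classical augmenting-path argument: starting from the zero flow, repeatedly augment along a directed path in the residual graph (where each edge $e$ has residual capacity $c(e) - \theta(e)$, plus backward slack $\theta(\check e)$ on reversed edges) for as long as such a path exists. Since capacities are nonnegative reals and at each step one can augment by a positive amount, in a finite graph one obtains by a compactness/limiting argument (or, if capacities are rational, in finitely many steps) a flow $\theta^*$ for which the set $A$ of vertices reachable from $x$ in the residual graph does not contain $y$. Then $S = \{e : \underline e \in A,\, \overline e \notin A\}$ is a cutset separating $x$ from $y$ on which $\theta^*$ saturates every edge and uses no reverse edge; consequently $\strength(\theta^*) = c(S)$, yielding equality in the finite case.

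To pass to a countable locally finite (or more generally countable) graph, I would exhaust $V$ by finite sets $V_n \uparrow V$ containing $x$ and $y$, build an auxiliary finite graph $G_n$ by contracting $V \setminus V_n$ to a single vertex identified with~$y$ (keeping the induced capacities), and apply the finite theorem to obtain compatible flows $\theta_n$ in $G_n$ of strength equal to the min-cut value $m_n$ of $G_n$. Each cutset of $G_n$ corresponds to a cutset of $G$ of the same capacity, so $m_n \le \inf\{c(S)\}$; conversely, any cutset of $G$ of finite capacity eventually lies inside $V_n$ and bounds $m_n$ from above as $n\to\infty$. Extending each $\theta_n$ by zero outside $G_n$, the vector $(\theta_n(e))_{e\in E}$ lies in the compact product space $\prod_{e\in E}[0,c(e)]$, and Tychonoff's theorem provides a pointwise subsequential limit $\theta$. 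The divergence-zero condition at each vertex of $V \setminus \{x,y\}$ passes to the limit (only finitely many edges meet each vertex in the locally finite case, and in general via a careful truncation), so $\theta$ is a flow from $x$ to $y$ with $\strength(\theta) = \lim m_n = \inf\{c(S)\}$.

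The main obstacle is the limiting argument in the countable setting: verifying that the divergence at each vertex truly passes to the limit (ruling out ``escape of flow to infinity'') and that the min-cut values $m_n$ of the contracted graphs actually converge upward to $\inf\{c(S)\}$ over cutsets of $G$. The first point reduces to local finiteness of the sum at each vertex; the second requires showing that any cutset in $G$ is eventually ``captured'' by some $G_n$, which is immediate when the cutset is finite and requires a separate approximation argument (cutting along large balls and exploiting $c(S) < \infty$) in general.
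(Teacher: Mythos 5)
First, a point of comparison: the paper does not prove this proposition at all; it quotes it from the literature (Ford--Fulkerson for finite networks, and the countable version from Lyons--Peres, Theorem 2.19, and Aharoni), so your argument must stand on its own. Your finite-graph part is essentially the classical argument and is fine once phrased with a little care: with arbitrary real capacities the naive augmenting-path iteration can converge to a flow that is \emph{not} maximal (the classical Ford--Fulkerson example), so one should instead take $\theta^*$ maximizing the continuous functional $\strength(\theta)$ over the compact set of compatible flows, observe that at a maximizer no augmenting path can exist, and then run exactly the residual-reachability cut argument you describe. That is a fixable imprecision, not a gap.

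The countable case, however, has a genuine gap. Contracting $V\setminus V_n$ onto $y$ identifies ``exiting $V_n$'' with ``reaching $y$'', and this changes both sides of the identity. The cut comparison goes the wrong way: every cutset of $G_n$ induces a cutset of $G$ of the same capacity, which gives $\inf_S c(S)\le m_n$, not $m_n\le\inf_S c(S)$ as you wrote; and conversely a cutset of $G$ need \emph{not} be a cutset of $G_n$, because it does not separate $x$ from the contracted vertex, so it is false that finite cutsets of $G$ are ``eventually captured'' by $G_n$. Concretely, take $G=\Z^3$ with $x$ and $y$ non-adjacent, capacity $1$ on the edges with head $y$ and a huge capacity $M$ on all other edges: then $\inf_S c(S)\le 6$ (cut the edges entering $y$), while any cutset of $G_n$ contains the out-boundary of a finite vertex set containing $x$ and avoiding $y$, hence contains edges of capacity $M$ and has capacity of order $M$; thus $m_n$ does not converge to $\inf_S c(S)$. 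Correspondingly, the Tychonoff limit $\theta$ of the $\theta_n$ need not be a flow from $x$ to $y$ of strength $\lim_n m_n$: the portion of $\theta_n$ that leaves through the boundary of $V_n$ (rather than through genuine edges into $y$) escapes to infinity in the limit, so although $\dive(\theta)(v)=0$ for $v\ne x,y$ does pass to the limit by local finiteness, the divergence at $y$ can be strictly smaller in absolute value than the divergence at $x$, and $\theta$ is then not of the form required. Ruling out this escape of flow to and from infinity (for instance by decomposing the $\theta_n$ into path flows from $x$ to $y$ and taking weak limits of the path measures, or by keeping the contracted vertex as a separate sink and tracking how much flow and cut capacity genuinely involve $y$) is precisely the delicate point of the two-terminal countable statement, and it is the reason the paper defers to \cite{lyons-peres} and \cite{Aharoni} instead of giving a proof.
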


Hence, we see that the limitation $p \theta(e)\le \alpha_e$ is a max-flow problem.
This implies that for $c(e)=\alpha_e$, for all $p$ smaller than the min-cut, we can find a flow such that
$p\theta(e)< \alpha_e$. Consider for example the case of transience, Section \ref{sub:proof_transience_dir}.
The minimal cutset from 0 to $\partial$, on $\Z^d$ with capacities $\alpha_e$ is the set of edges
$\{(0,e_i)\,:\, i=1, \ldots, 2d\}$. Hence, the min-cut is $\sum_{i=1}^{2d} \alpha_i$.
This is strictly smaller than $\kappa$, and hence it does not match the optimal condition on $\alpha$.
This can be explained as follows: the minimal cutset corresponds to the ``trap'' with the single vertex
$\{0\}$. But this cannot be a trap since the RWDE makes a jump at each step.
This problem can be solved by the following trivial remark: we have 
$$
1=\sum_{i=1}^{2d} \w(0,e_i).
$$
This rather trivial identity indeed means that the RWDE leaves the vertex $\{0\}$ with probability 1 after one step!
The idea is to inject  $\sum_{i=1}^{2d} \w(0,e_i)$ in the identities. This has the effect of increasing the weight of at least one
edge in $\{e\,:\,\vert e\vert=1\}$ by 1. Hence, it makes it easier to create a flow compatible with these new weights.
This gives the optimal condition involving the parameter $\kappa$.
One technical difficulty that appears in \cite{sabot2011transience} and \cite{sabot2013particle} comes from the fact that we need to
implement the max-flow min-cut theorem, with keeping a finite $L^2$ norm. This is overcome by some surgery 
on the flows.

\section{The case of dimension 1. Relation with Chamayou Letac exact solutions of renewal equation}
\label{sec:1D}

\subsection{Generalities}

In the one-dimensional case, the environment $\omega$ is fully given by the sequence $(\omega_x)_{x\in\Z}$ of i.i.d.~real random variables $\omega_x=\omega_{(x,x+1)}$. In the Dirichlet case, their common distribution is $\Beta(\alpha_1,\alpha_2)$, henceforth denoted $\Beta(\alpha,\beta)$:
\[\text{under }\P^{(\alpha,\beta)},\qquad \omega_0\sim\Beta(\alpha,\beta)=\frac1{B(\alpha,\beta)}u^{\alpha-1}(1-u)^{\beta-1}\indic_{(0,1)}(u)\d u.\]
We may assume $\alpha\ge\beta$ without loss of generality, because of reflection symmetry. 

Many results are known in wide generality for RWRE on $\Z$ and can be readily applied to this situation. In particular, Solomon's results~\cite{solomon1975} give
\begin{theorem} $(d=1)$
\begin{description}
	\item[Transience] 
\begin{align*}
\text{if $\alpha=\beta$, }\qquad& -\infty=\liminf_n X_n<\limsup_n X_n=+\infty,\qquad P_0^{(\alpha,\beta)}\text{-a.s.}\\
\text{if $\alpha>\beta$, }\qquad& \qquad X_n\limites{}{n\to\infty}+\infty,\qquad P_0^{(\alpha,\beta)}\text{-a.s.},
\end{align*}
	\item[Ballisticity]
\begin{align*}
\text{if $\beta+1\ge \alpha\ge\beta$, }\qquad& \frac{X_n}n\limites{}{n\to\infty}0,\qquad P_0^{(\alpha,\beta)}\text{-a.s.},\\
\text{if $\alpha>\beta+1$, }\qquad& \frac{X_n}n\limites{}{n\to\infty}v=\frac{\alpha-\beta-1}{\alpha+\beta-1}>0,\qquad P_0^{(\alpha,\beta)}\text{-a.s.} 
\end{align*}
\end{description}
\end{theorem}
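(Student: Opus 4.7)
The plan is to invoke Solomon's classical 1D RWRE criteria~\cite{solomon1975} and reduce everything to explicit moment computations for $\Beta(\alpha,\beta)$. Define, as usual, the environment ratios
\[
\rho_x = \frac{1-\omega_x}{\omega_x},\qquad x\in\Z,
\]
which under $\P^{(\alpha,\beta)}$ are i.i.d.~transformations of $\omega_0\sim\Beta(\alpha,\beta)$. Solomon's theorem asserts that the walk is transient to $+\infty$ (resp.\ to $-\infty$, resp.\ recurrent) according to whether $\E^{(\alpha,\beta)}[\log\rho_0]$ is $<0$, $>0$, or $=0$; moreover, the LLN holds with deterministic speed
\[
v = \frac{1-\E^{(\alpha,\beta)}[\rho_0]}{1+\E^{(\alpha,\beta)}[\rho_0]}
\]
when $\E^{(\alpha,\beta)}[\rho_0]<1$ (with the usual conventions giving $v=0$ when $\E^{(\alpha,\beta)}[\rho_0]\ge 1$ in the $+\infty$-transient regime, and also when $\E^{(\alpha,\beta)}[\rho_{0}^{-1}]\ge 1$ in the $-\infty$-transient regime).

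For the transience statement, I would compute $\E^{(\alpha,\beta)}[\log\omega_0]$ and $\E^{(\alpha,\beta)}[\log(1-\omega_0)]$ by differentiating the Beta normalization at $\xi=0$ in the moment formula~\eqref{eq:moments}, giving
\[
\E^{(\alpha,\beta)}[\log\omega_0]=\psi(\alpha)-\psi(\alpha+\beta),\qquad \E^{(\alpha,\beta)}[\log(1-\omega_0)]=\psi(\beta)-\psi(\alpha+\beta),
\]
so that $\E^{(\alpha,\beta)}[\log\rho_0]=\psi(\beta)-\psi(\alpha)$. Since the digamma $\psi$ is strictly increasing on $(0,\infty)$, this quantity is zero when $\alpha=\beta$ (giving recurrence) and strictly negative when $\alpha>\beta$ (giving transience to $+\infty$), as desired.

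For the ballisticity statement, the key is the explicit value of $\E^{(\alpha,\beta)}[\rho_0]$. Using the moment formula~\eqref{eq:moments} with $\xi_1=-1,\,\xi_2=+1$ (which requires $\alpha-1>0$),
\[
\E^{(\alpha,\beta)}[\rho_0]=\E^{(\alpha,\beta)}\!\left[\omega_0^{-1}(1-\omega_0)\right]=\frac{\Gamma(\alpha-1)\Gamma(\beta+1)}{\Gamma(\alpha)\Gamma(\beta)}=\frac{\beta}{\alpha-1},
\]
and the expectation is $+\infty$ when $\alpha\le 1$. Comparing to $1$, I would split into cases: if $\alpha>\beta+1$ then $\E^{(\alpha,\beta)}[\rho_0]=\beta/(\alpha-1)<1$, and Solomon's formula yields the announced positive speed $v=(\alpha-\beta-1)/(\alpha+\beta-1)$; if $\beta\le\alpha\le\beta+1$, then either $\alpha\le 1$ (so $\E^{(\alpha,\beta)}[\rho_0]=+\infty$) or $\alpha>1$ and $\E^{(\alpha,\beta)}[\rho_0]=\beta/(\alpha-1)\ge 1$, and in both sub-cases Solomon's criterion gives $X_n/n\to 0$ almost surely, including the recurrent case $\alpha=\beta$ where $v=0$ follows already from recurrence.

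There is no genuine obstacle here; the only point requiring care is verifying that the boundary case $\alpha=\beta+1$ (where $\E^{(\alpha,\beta)}[\rho_0]=1$) and the small-$\alpha$ regime $\alpha\le 1$ both fall under the zero-speed conclusion of Solomon's theorem, which follows directly from the standard formulation. The whole argument is therefore essentially a transparent specialization of general 1D RWRE theory, with the Beta structure entering only through the two elementary moment computations above.
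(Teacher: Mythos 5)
Your argument is correct and follows the same route as the paper, which simply derives this theorem as the specialization of Solomon's criteria to the $\Beta(\alpha,\beta)$ environment; your explicit computations $\E^{(\alpha,\beta)}[\log\rho_0]=\psi(\beta)-\psi(\alpha)$ and $\E^{(\alpha,\beta)}[\rho_0]=\beta/(\alpha-1)$ (infinite for $\alpha\le1$) are exactly the inputs needed. The only point worth making explicit is that in the zero-speed regime one should also note $\E^{(\alpha,\beta)}[\rho_0^{-1}]\ge1$ (immediate since $\alpha\ge\beta$), so that Solomon's trichotomy indeed returns $v=0$ rather than a negative speed.
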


Annealed scaling limits for RWRE were also proved by Kesten, Kozlov and Spitzer~\cite{kks} and made more explicit in~\cite{limitlaws,stablefluctuations}. They are driven by the exponent ${\kappa_1}>0$ such that $\E[(\frac{1-\omega_0}{\omega_0})^{\kappa_1}]=1$. In the case of Beta environment, a simple computation gives
\[\fbox{${\kappa_1}=\alpha-\beta$}.\] 
This quantity plays a role analog to that of the constant ${\kappa}$ that we introduced in higher dimensions.
The fully explicit statement of the annealed scaling limits (Theorem~\ref{thm:kks} below) requires a computation that is specific to Beta environments and due to Chamayou and Letac~\cite{chamayou-letac}.  

\subsection{Chamayou and Letac's exact computation}

We assume $\alpha>\beta$ in the following. For $k\in\Z$, let
\[\rho_k=\frac{1-\omega_k}{\omega_k}\]
and define the random series
\[R=1+\rho_1+\rho_1\rho_2+\rho_1\rho_2\rho_3+\cdots.\]
The condition $\alpha>\beta$ implies $\E[\ln\rho_1]<0$ and thus a.s.~convergence of $R$. Then we have (Kesten~\cite{kesten1973})
\[P(R>t)\equivalent{t\to\infty} C_K(\alpha,\beta) t^{-{\kappa_1}},\]
where $C_K(\alpha,\beta)$ is known as Kesten's constant and is in general not explicit. Remarkably, it is however the case for Beta environment where even the law of $R$ is known, due to the work of Chamayou and Letac~\cite{chamayou-letac}:

\begin{proposition}\label{lem:CL}
	The random variable $1/R$ follows a distribution $\Beta(\alpha-\beta,\beta)$. In particular, $\P^{(\alpha,\beta)}(R\in \d r)=\frac1{B(\alpha-\beta,\beta)}\frac{(r-1)^{\beta-1}}{r^\alpha}\indic_{(0,+\infty)}(r)\d r$ and
\[C_K(\alpha,\beta)=\frac1{(\alpha-\beta)B(\alpha-\beta,\beta)}.\]
\end{proposition}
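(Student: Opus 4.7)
The plan is to identify the law of $S:=1/R$ by checking that the $\text{Beta}(\alpha-\beta,\beta)$ distribution solves the natural distributional fixed-point equation satisfied by $S$, and then invoking uniqueness of that fixed point. Splitting off the first term, $R=1+\rho_1 R'$ where $R':=1+\rho_2+\rho_2\rho_3+\cdots$ is independent of $\rho_1$ and satisfies $R'\stackrel{\rm law}{=}R$; writing $S'=1/R'$, this becomes
\[
S \;\stackrel{\rm law}{=}\; \frac{\omega_1 S'}{\omega_1 S'+(1-\omega_1)},\qquad \omega_1\sim\text{Beta}(\alpha,\beta)\text{ independent of }S'\stackrel{\rm law}{=}S.
\]

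The heart of the proof is to verify the following claim: if $\omega\sim\text{Beta}(\alpha,\beta)$ and $S'\sim\text{Beta}(\alpha-\beta,\beta)$ are independent, then $T:=\omega S'/(\omega S'+1-\omega)$ is again $\text{Beta}(\alpha-\beta,\beta)$-distributed. I would do this by a direct density computation. For fixed $\omega$, the map $s\mapsto t$ is inverted by $s=t(1-\omega)/(\omega(1-t))$, which is admissible only for $\omega>t$, and has Jacobian $ds/dt=(1-\omega)/(\omega(1-t)^2)$; moreover $1-s=(\omega-t)/(\omega(1-t))$. Plugging into the joint density of $(\omega,S')$ one finds that the powers of $\omega$ collapse to zero by a direct exponent count, so the marginal density of $T$ reduces to
\[
f_T(t)=\frac{t^{\alpha-\beta-1}}{B(\alpha,\beta)B(\alpha-\beta,\beta)(1-t)^{\alpha}}\int_t^1 (1-\omega)^{\alpha-1}(\omega-t)^{\beta-1}\,d\omega.
\]
The substitution $u=(\omega-t)/(1-t)$ evaluates the remaining integral to $(1-t)^{\alpha+\beta-1}B(\alpha,\beta)$, and the Beta-function constants telescope to give exactly $t^{\alpha-\beta-1}(1-t)^{\beta-1}/B(\alpha-\beta,\beta)$.

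For uniqueness of the fixed point I would either quote the perpetuities theory (since $\alpha>\beta$ gives $\mathbb E[\log\rho_1]=\psi(\beta)-\psi(\alpha)<0$, the distributional equation $R\stackrel{\rm law}{=}1+\rho_1 R'$ admits a unique solution in the relevant class, cf.~Kesten--Grincevi\v cius--Goldie) or argue directly: the backward iteration $g_{\omega_1}\circ\cdots\circ g_{\omega_n}(s_0)$ with $g_\omega(s)=\omega s/(\omega s+1-\omega)$ coincides with $1/R^{(n)}$ (where $R^{(n)}=1+\rho_1+\cdots+\rho_1\cdots\rho_n$) up to a term $\rho_1\cdots\rho_n/s_0$ that vanishes a.s., so it converges a.s.~to $1/R$ from any starting point $s_0\in(0,1)$; in particular the law of $S$ is the unique fixed point of the associated map on probability measures, and by the previous step this must be $\text{Beta}(\alpha-\beta,\beta)$. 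The density formula and $C_K(\alpha,\beta)$ follow from the change of variables $r=1/u$: $f_R(r)=(r-1)^{\beta-1}/(B(\alpha-\beta,\beta)r^\alpha)$ on $(1,\infty)$, and integrating the tail using $(r-1)^{\beta-1}\sim r^{\beta-1}$ yields $C_K(\alpha,\beta)=1/((\alpha-\beta)B(\alpha-\beta,\beta))$.

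The only mildly delicate step is the exponent bookkeeping in the density calculation; everything else is structural (the self-similar decomposition $R=1+\rho_1 R'$) or standard (uniqueness for perpetuities). I do not expect a real obstacle, though one should note at the outset that $R>1$ a.s.~so that $S\in(0,1)$, which is immediate from the series definition and consistent with the support of $\text{Beta}(\alpha-\beta,\beta)$.
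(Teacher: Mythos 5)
Your proof is correct and takes essentially the same route as the paper's first argument, namely the renewal/fixed-point equation $R=1+\rho R'$ of Chamayou--Letac; you simply carry out explicitly (and correctly) the density verification and the uniqueness step that the paper delegates to the reference. The paper additionally sketches a second, independent proof via the time-reversal Lemma~\ref{lem:reversal}, which your argument does not use.
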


Let us emphasize that the random variable $R$ is involved in several quenched quantities, whose law is therefore explicit: for instance, simple computations give
\[P_{1,\omega}(H_0=+\infty)=\frac1R,\]
\[G_\omega(0,0)=\frac 1{\omega_0}R,\]
\[E_{0,\omega}[H_1]=2R_--1,\]
where $R_-=1+\rho_0+\rho_0\rho_{-1}+\cdots$ has same law as $R$. In particular, one can see that
\[\E^{(\alpha)}[G_\omega(0,0)^s]<\infty\qquad\Leftrightarrow\qquad s<{\kappa_1},\]
to be compared with Theorem~\ref{th:transience} for a higher dimensional analog. Furthermore, Kesten's constant appears in the scaling limits of~\cite{limitlaws,stablefluctuations} and thus in Theorem~\ref{thm:kks} below. 

\textit{Proof by renewal equation.}
The approach of Chamayou and Letac is based on the following equation: writing $\rho=\rho_1$, 
\begin{equation}
R=1+\rho R',
\end{equation}
where $R'$ has same distribution as $R$ and is independent of $\rho$. This distributional fix-point equation has a unique solution, and one can check that the distribution given in the lemma is such a solution. The paper~\cite{chamayou-letac} gives several instances of applications of this fruitful method. Another example appears in Section~\ref{sub:LDP}. 

\textit{Proof by time reversal.} 
A more direct approach to Lemma~\ref{lem:CL} is provided by the use of Lemma~\ref{lem:reversal}. 
\begin{figure}[h]
\begin{center}
\includegraphics[height=1.5cm]{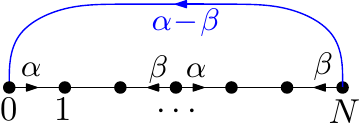}
\end{center}
\caption{Graph $G_N$ for the proof of Lemma~\ref{lem:CL}.}
\label{fig:transience_dir_1D}
\end{figure}

We have, for all $N$, with the graph $G_N$ from Figure~\ref{fig:transience_dir_1D} (which is similar to Figure~\ref{fig:transience_dir} with $d=1$),
\begin{align}
P^{\Z}_{1,\omega}(H_N<H_0) \notag
& = P^{G_N}_{1,\omega}(H_N<H_0) = P^{G_N}_{0,\omega}(H_N<H_0^+)\notag\\
	& =1-P^{G_N}_{0,\omega}(H_0^+<H_N) = 1 - P^{G_N}_{0,\omegach}(H_0^+<H_N)\notag\\
	& =1-\omegach_{0}P^{G_N}_{1,\omegach}(H_0<H_N),\label{eq:cl_reversal}
\end{align}
where the fourth equality comes from applying equation~\eqref{checkw} to each cycle that realizes the event, and noting that the set of those cycles is globally invariant by time inversion. Then, by Lemma~\ref{lem:reversal}, $\omegach_0\sim{\rm Beta(\beta,\alpha-\beta)}$, and $\omegach_1,\ldots,\omegach_{N-1}$ are i.i.d.~with law $\Beta(\beta,\alpha)$ hence
\[\Big(P^{G_N}_{1,\omegach}(H_0<H_N)\text{ under }\P^{(\alpha,\beta)}\Big) \equilaw \Big(P^{\Z}_{1,\omega}(H_0<H_N)\text{ under }\P^{(\beta,\alpha)})\limites{\rm law}{N\to+\infty} 1,\]
by transience to $-\infty$ of the RWDE with parameters $(\beta,\alpha)$. Thus, the last probability in~\eqref{eq:cl_reversal} goes to 1 in probability and we get, by letting $N\to\infty$, 
\[P_{1,\omega}(H_0=+\infty)\sim\Beta(\alpha-\beta,\beta),\]
in accordance to Proposition~\ref{lem:CL} given that the left-hand probability equals $1/R$. 

Let us finally state the annealed scaling limits. Kesten, Kozlov and Spitzer~\cite{kks} proved limit laws for one-dimensional RWRE under general assumptions, however their approach did not give a description of the constants involved. This work was done later. First, the variance in the diffusive case can be computed (cf.~\cite[thm 2.2.1, where $\sigma^2_{P,1}$ should involve $\overline Q$ instead of $Q$]{zeitouni} and references therein). Then, the parameters of the stable limits were obtained in~\cite{limitlaws, stablefluctuations} in terms of Kesten's constant; a fine analysis of the time spent in traps, i.e.\ valleys of the potential, indeed enabled  to relate the tail behaviour of this time to the tail of its quenched average and then to that of the renewal series $R$ (cf.~\cite{renewal} for this crucial part). The explicit value of $C_K(\alpha,\beta)$ deduced from~\cite{chamayou-letac} (cf.~Proposition~\ref{lem:CL} above) is the last ingredient to the following statement: 

\begin{theorem}{($d=1$)}
\label{thm:kks} We have, when $n$ goes to infinity,  
\begin{itemize}
	\item if $0<\alpha-\beta<1$, 
 \begin{align*}
  \frac{X_n}{n^{\alpha-\beta}}
	 &\stackrel{\mathrm{law}}{\longrightarrow} \frac{\sin(\pi(\alpha-\beta))}{2^{\alpha-\beta}\pi}\frac{B(\alpha-\beta,\beta)^2}{\Psi(\alpha)-\Psi(\beta)}\left(\frac1{\mathcal{S}_{\alpha-\beta}^{ca}}\right)^{\alpha-\beta};
\end{align*}
	\item if $\alpha-\beta=1$, for deterministic sequences $(u_n)_n$, $(v_n)_n$ converging to 1, 
 \begin{align*}
  \frac{X_n-v_n\frac{1}{2\beta}\frac{n}{\log n}}{n/(\log n)^2}
    &\stackrel{\mathrm{law}}{\longrightarrow}  \frac{1}{2\beta}\mathcal{S}_{1}^{ca},\label{eqn:thm_x1}
\end{align*}
and in particular,
\begin{align*}
\frac{X_n}{n/\log n}
	 \limites{\rm prob}{}\frac1{2\beta};
\end{align*}
	\item if $1<\alpha-\beta<2$,
 \begin{align*}
  \frac{X_n-\frac{\alpha-\beta-1}{\alpha+\beta-1}n}{n^{\frac{1}{\alpha-\beta}}}
	 &\stackrel{\mathrm{law}}{\longrightarrow} -2\left(-\frac{\pi}{\sin(\pi(\alpha-\beta))}\frac{\Psi(\alpha)-\Psi(\beta)}{B(\alpha-\beta,\beta)^2}\right)^{\frac{1}{\alpha-\beta}} \left(\tfrac{\alpha-\beta-1}{\alpha+\beta-1}\right)^{1+\frac{1}{\alpha-\beta}}\mathcal{S}_{\alpha-\beta}^{ca};
\end{align*}
	\item if $\alpha-\beta>2$, 
\begin{align*}
  \frac{X_n-\frac{\alpha-\beta-1}{\alpha+\beta-1}n}{\sqrt n}
	 &\stackrel{\mathrm{law}}{\longrightarrow}2\left(\frac{\beta(\alpha-1)(\alpha-\beta)}{(\alpha-\beta-2)(\alpha+\beta-1)^2}\right)^{1/2} \mathcal S_2,
\end{align*}
\end{itemize}
where $\Psi$ denotes the classical digamma function,
$\Psi(z)= (\log \Gamma)'(z)=\frac{\Gamma'(z)}{\Gamma(z)},$ 
$\mathcal S_2$ has law $\mathcal N(0,1)$, 
and, for $0<{\kappa_1}<2$, $S_{{\kappa_1}}^{ca}$ is a totally asymmetric stable random variable of parameter ${\kappa_1}$, such that 
\[E[e^{it\mathcal S_{{\kappa_1}}^{ca}}]=\begin{cases} 
e^{-(-it)^{\kappa_1}}&\text{if $0<{\kappa_1}<1$}\\
e^{-\frac\pi2|t|-i|t|\ln|t|}&\text{if ${\kappa_1}=1$}\\
e^{(-it)^{\kappa_1}}&\text{if $1<{\kappa_1}<2$.}
\end{cases}\]
\end{theorem}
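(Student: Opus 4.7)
The plan is to transfer the scaling-limit question from $X_n$ to the hitting times $T_n=\inf\{k\ge 0\,:\,X_k=n\}$ via the standard inversion $\{X_n\ge x\}=\{T_x\le n\}$, then decompose $T_n$ into i.i.d.\ blocks using the regeneration structure of the transient one-dimensional walk, and finally identify the tail of each block using Proposition~\ref{lem:CL}. Under $P_0^{(\alpha,\beta)}$ with $\kappa_1=\alpha-\beta>0$ the walk is transient to $+\infty$, so $T_n$ is a.s.\ finite, and by Kesten's renewal construction there exist i.i.d.\ crossing times $\tau_1,\tau_2,\ldots$ (each distributed as $T_1$ started at $0$ conditioned never to return to $-1$) such that $T_n-\sum_{k=1}^{n}\tau_k$ is negligible on the relevant scales.

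The heart of the argument is the tail analysis of $\tau_1$. Its quenched mean is an explicit affine function of $R_-=1+\rho_0+\rho_0\rho_{-1}+\cdots$, which has the same distribution as $R$. By Proposition~\ref{lem:CL}, $R_-$ carries the explicit Beta-type density and $P(R_->t)\equivalent{t\to\infty} C_K(\alpha,\beta)\,t^{-\kappa_1}$ with $C_K(\alpha,\beta)=1/\bigl((\alpha-\beta)B(\alpha-\beta,\beta)\bigr)$. A quenched second-moment analysis, carried out in~\cite{renewal,limitlaws,stablefluctuations}, shows that the fluctuations of $\tau_1$ around its quenched mean are of strictly smaller order, so that $P(\tau_1>t)\equivalent{t\to\infty} 2^{\kappa_1}C_K(\alpha,\beta)\,t^{-\kappa_1}$; the factor $2^{\kappa_1}$ reflects the fact that the quenched mean of a crossing of a deep trap is essentially $2R_-$.

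Once this tail is known, each regime follows from the classical limit theorems for sums of i.i.d.\ variables applied to $T_n$: for $0<\kappa_1<1$ the mean is infinite and $T_n/n^{1/\kappa_1}$ converges to a totally asymmetric $\kappa_1$-stable law; for $\kappa_1=1$ a slowly-varying truncation produces the logarithmic centering and a $\mathcal S_1^{ca}$ limit; for $1<\kappa_1<2$ the mean $E[\tau_1]=(\alpha+\beta-1)/(\alpha-\beta-1)=1/v$ is finite and the fluctuations are $\kappa_1$-stable; for $\kappa_1>2$ the variance is finite and can be computed directly from $E[R_-^2]$, giving the Gaussian case. Inverting $T_n\leftrightarrow X_n$ via a delta-method-type argument then converts each of these into the corresponding statement for $X_n$. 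The main obstacle is the bookkeeping of the multiplicative constants: the prefactor $\sin(\pi\kappa_1)/\pi$ arises from the canonical normalization of the one-sided stable laws via $\Gamma(1-\kappa_1)$ and the reflection formula; the factor $B(\alpha-\beta,\beta)^2$ comes from raising the Chamayou--Letac constant $C_K(\alpha,\beta)$ to the power $1/\kappa_1$ inside the inversion step; and $\Psi(\alpha)-\Psi(\beta)=E[\log(1/\rho)]>0$ enters through the ergodic theorem for the potential $V_k=\sum_{j\le k}\log\rho_j$ that controls the depth of the deepest trap up to position $n$. The $\kappa_1=1$ line is the most delicate step, because the centering and scaling are essentially forced by the precise choice of truncation, and one must carry through the explicit constants from~\cite{limitlaws,stablefluctuations}.
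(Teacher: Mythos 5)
Your overall strategy (hitting-time inversion $\{X_n\ge x\}=\{T_x\le n\}$, block decomposition of $T_n$, heavy-tail analysis of the block times via the renewal series $R$, classical stable limit theorems, then inversion) is the route followed by the works the paper itself relies on -- the paper does not reprove this theorem but assembles it from Kesten--Kozlov--Spitzer, the explicit-constant papers it cites, and Proposition~\ref{lem:CL}. However, the quantitative core of your sketch is wrong, and it is precisely the part that this theorem is about. First, the claim that ``the fluctuations of $\tau_1$ around its quenched mean are of strictly smaller order'' is false: conditionally on a deep trap, the crossing time is approximately \emph{exponentially} distributed with mean of order $E_{0,\omega}[H_1]=2R_--1$, so its fluctuations are of the same order as the mean (this alone changes the tail constant by a factor $\Gamma(\kappa_1+1)$); relating the annealed tail of the trap-crossing time to the tail of its quenched average is exactly the ``crucial part'' the paper attributes to the cited references, not something that can be waved through. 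Second, the successive crossing times $T_k-T_{k-1}$ are not i.i.d.\ (they are stationary and strongly clustered: one deep valley inflates several consecutive increments), and genuine regeneration blocks are not ``$T_1$ conditioned never to return to $-1$'' (their spatial extent is random), so the i.i.d.\ stable limit theorem cannot be applied with the marginal tail constant you propose.

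One can see concretely that your bookkeeping cannot produce the stated constants. If the blocks had unit displacement and were i.i.d.\ with $P(\tau_1>t)\sim C\,t^{-\kappa_1}$, $0<\kappa_1<1$, then $X_n/n^{\kappa_1}\to \big(C\,\Gamma(1-\kappa_1)\big)^{-1}\big(\mathcal S^{ca}_{\kappa_1}\big)^{-\kappa_1}$; plugging in your $C=2^{\kappa_1}C_K(\alpha,\beta)=2^{\kappa_1}/\big(\kappa_1 B(\alpha-\beta,\beta)\big)$ gives a limit proportional to $B(\alpha-\beta,\beta)$ to the \emph{first} power and containing no $\Psi(\alpha)-\Psi(\beta)$, whereas the theorem has $B(\alpha-\beta,\beta)^2/\big(\Psi(\alpha)-\Psi(\beta)\big)$. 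The squared Beta factor (equivalently, Kesten's constant entering squared) and the digamma factor come from the deep-valley analysis: the tail of the occupation time of a single valley involves two Kesten-type series (one per slope) and the spatial density of deep valleys brings in $\E[\log(1/\rho_0)]=\Psi(\alpha)-\Psi(\beta)$. Your explanation that $B^2$ arises from ``raising $C_K$ to the power $1/\kappa_1$ inside the inversion'' does not work either, since the inversion raises it back and returns $C_K$ to the first power; likewise, in the case $\kappa_1>2$ the limiting variance is not a simple function of $\E[R_-^2]$. In short, the skeleton matches the literature the paper quotes, but the tail constant and independence claims on which your constant-tracking rests are incorrect, so the proof as proposed would not yield the stated limits.
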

\begin{remark}
Note that RWDE is the only example where the constants in these limit theorems are fully explicit.
\end{remark}
\begin{remark} In the case ${\kappa_1}=2$, a CLT with scaling $\sqrt{n\ln n}$ holds by~\cite{kks}, although up to our knowledge the explicit constant has not been rigorously computed so far. 
\end{remark}

\subsection{Exact expression of the large deviation rate function}\label{sub:LDP}

Consider the random walk in beta random environment on $\Z$ with parameters $(\alpha, \beta)$. 
Assume that $\alpha>\beta$ so that the RWDE is transient in the positive direction. Consider the stopping times for $k\in \N$,
$$
H_k=\inf\{n\ge 0, \; X_n=k\}.
$$
Define the function
\begin{eqnarray}\label{phi}
\phi(\w,\lambda)=E_{0,\w}\left[ \lambda^{H_1}\right].
\end{eqnarray}
The sequence ${1\over k}H_k$ satisfies a large deviation principle
with rate function given by the Legendre transform of $\lambda\mapsto \E\left[ \log\phi(\w,\lambda)\right]$, cf.~\cite{denhollander},
Lemma~VII.6 (and previously \cite{greven-denhollander,comets-gantert-zeitouni}).
It is also remarked in Lemma~VII.12, that the function $\phi(\w,\lambda)$ satisfies a recursion equation, and hence can be
represented as a continued fraction. 
What we show below is that in
case of Beta environments, the law of $\phi(\w,\lambda)$ is explicit, hence the rate function can be expressed as the Legendre
transform of a simple integral.

For $z\in (-\infty, 1)$, we introduce the hypergeometric density $h^{(1)}(\alpha, \beta; z)(\d u)$, cf.~\cite{chamayou-letac} example 6 page 13, 
which is the density on the interval $(0,1)$ given by
$$
 {\Gamma(\alpha+\beta)\over \Gamma(\alpha)\Gamma(\beta)}{1\over F(\alpha, \alpha; \alpha+ \beta; \lambda^2)} 
 u^{\alpha-1}(1-u)^{\beta-1} (1-u z)^{-\alpha} \indic_{(0,1)}(u) \d u,
 $$
 where 
 $$
 F(a,b;c;z)=\;  _2F_1(a,b;c;z)={\Gamma(c)\over \Gamma(b)\Gamma(c-b)} \int_{0}^1 
 u^{b-1}(1-u)^{c-b-1} (1-u z)^{-a} \d u
 $$ 
 is the classical hypergeometric function.
\begin{theorem}
\label{lawZ}
Let $\lambda\in [0,1]$. Consider the random variable
$$
Z(\w)={1\over \lambda} \phi(\w,\lambda).
$$
 Then $Z$ follows the hypergeometric law $h^{(1)}(\alpha, \beta; \lambda^2)$.
  \end{theorem}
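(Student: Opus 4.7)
The plan is to characterize the distribution of $Z(\omega)=\phi(\omega,\lambda)/\lambda$ via a distributional fixed-point equation, then verify that $h^{(1)}(\alpha,\beta;\lambda^2)$ solves it, and finally invoke uniqueness. First, I will apply the Markov property at time $1$ to $\phi(\omega,\lambda)=E_{0,\omega}[\lambda^{H_1}]$. Conditioning on the first step, with probability $\omega_0$ we hit $1$ immediately, and with probability $1-\omega_0$ we jump to $-1$, must return to $0$, and then go to $1$; by the strong Markov property and independence of the disjoint portions of the environment, this gives
\[
\phi(\omega,\lambda)=\lambda\omega_0+\lambda(1-\omega_0)\,\phi(\tau_{-1}\omega,\lambda)\,\phi(\omega,\lambda),
\]
hence, setting $Z'=\phi(\tau_{-1}\omega,\lambda)/\lambda$,
\[
Z=\frac{\omega_0}{1-\lambda^2(1-\omega_0)\,Z'}.
\]
Here $Z'$ depends only on $(\omega_k)_{k\le -1}$, so it is independent of $\omega_0\sim\Beta(\alpha,\beta)$, and by stationarity $Z'\stackrel{\rm law}{=}Z$.

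Next, I will verify that if $U\sim h^{(1)}(\alpha,\beta;\lambda^2)$ and $\omega\sim\Beta(\alpha,\beta)$ are independent, then $V:=\omega/(1-\lambda^2(1-\omega)U)$ has law $h^{(1)}(\alpha,\beta;\lambda^2)$. Solving the relation gives the bijection
\[
\omega=\frac{V(1-\lambda^2 U)}{1-V\lambda^2 U},\qquad 1-\omega=\frac{1-V}{1-V\lambda^2 U},\qquad \frac{\partial\omega}{\partial V}=\frac{1-\lambda^2 U}{(1-V\lambda^2 U)^2}.
\]
After substitution (and some pleasant cancellations), the joint density of $(U,V)$ takes the \emph{symmetric} form
\[
\frac{1}{F(\alpha,\alpha;\alpha+\beta;\lambda^2)}\Bigl(\tfrac{\Gamma(\alpha+\beta)}{\Gamma(\alpha)\Gamma(\beta)}\Bigr)^{\!2}V^{\alpha-1}(1-V)^{\beta-1}U^{\alpha-1}(1-U)^{\beta-1}(1-V\lambda^2 U)^{-(\alpha+\beta)}.
\]
Integrating out $U$ via the Euler integral representation of ${}_2F_1$ produces the factor $B(\alpha,\beta)\,F(\alpha+\beta,\alpha;\alpha+\beta;V\lambda^2)$, and the Chu--Vandermonde-type identity $F(c,b;c;z)=(1-z)^{-b}$ collapses this to $(1-V\lambda^2)^{-\alpha}$. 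The resulting marginal density is exactly that of $h^{(1)}(\alpha,\beta;\lambda^2)$.

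Finally, for uniqueness, note that $Z$ is an explicit, a.s.-defined random variable, and iterating the recursion expresses it as an a.s.-convergent continued fraction in $\omega_{-1},\omega_{-2},\ldots$; equivalently, the map $\mathcal T\colon\mu\mapsto\text{law}\bigl(\omega_0/(1-\lambda^2(1-\omega_0)Y)\bigr)$ with $Y\sim\mu$ is a strict contraction on distributions on $[0,1]$ for $\lambda\in[0,1)$ (the Jacobian computation above shows that the map $y\mapsto \omega/(1-\lambda^2(1-\omega)y)$ has derivative bounded by $\lambda^2<1$ in $\omega$-expectation), so $\mathcal T$ has a unique fixed point, which by Step 2 is $h^{(1)}(\alpha,\beta;\lambda^2)$; the boundary case $\lambda=1$ follows by continuity and the fact that $\phi(\omega,1)=1$ a.s.\ matches the degeneration of the hypergeometric law at $z=1$.

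The main technical obstacle is the verification in the middle paragraph: the symmetrization of the joint density and the recognition that the integration over $U$ collapses via $F(c,b;c;z)=(1-z)^{-b}$ are the crucial algebraic miracles that make the hypergeometric distribution the right ansatz. Steps 1 and 3 are essentially standard (the recursion is the classical one-dimensional first-passage calculation, and uniqueness is contraction), so the paper's novelty lies in identifying the correct explicit law and performing this computation.
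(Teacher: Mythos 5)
Your overall architecture coincides with the paper's: derive the almost-sure recursion $\phi(\omega,\lambda)=\lambda\omega_0+\lambda(1-\omega_0)\phi(\tau_{-1}\omega,\lambda)\phi(\omega,\lambda)$ from the Markov property, turn it into the distributional fixed-point equation $Z\equilaw \omega_0/(1-\lambda^2(1-\omega_0)Z')$ with $\omega_0$ independent of $Z'\equilaw Z$ (this is exactly the paper's equation, rewritten without the substitution $Y=\omega_0/(1-\omega_0)$), verify that $h^{(1)}(\alpha,\beta;\lambda^2)$ is a fixed point, and conclude by uniqueness via the continued-fraction representation (the paper defers precisely this to \cite{denhollander}, Lemma VII.12). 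Where you genuinely differ is the verification step: the paper changes variables to $X=Z/(1-Z)$, introduces the density $h^{(2)}(\alpha,\beta;\lambda^2)$ on $(0,\infty)$, and matches the $s$-moments of both sides using Euler's integral and $F(a,b;c;z)=F(b,a;c;z)$, whereas you compute the pushforward density directly: I checked your Jacobian and the cancellation of the factors $(1-\lambda^2 u)^{\pm\alpha}$, the symmetric joint density proportional to $u^{\alpha-1}(1-u)^{\beta-1}v^{\alpha-1}(1-v)^{\beta-1}(1-\lambda^2uv)^{-(\alpha+\beta)}$ is correct, and integrating out $u$ by Euler's representation together with $F(a,b;b;z)=(1-z)^{-a}$ (the binomial-series degeneration --- not Chu--Vandermonde, but the identity you use is the right one) indeed returns $h^{(1)}(\alpha,\beta;\lambda^2)$. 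This is arguably cleaner than the paper's route, since it avoids the implicit step that equality of Mellin transforms identifies the law.

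The one flaw is the parenthetical contraction justification of uniqueness. The $y$-derivative of $y\mapsto\omega/(1-\lambda^2(1-\omega)y)$ is $\lambda^2\omega(1-\omega)/(1-\lambda^2(1-\omega)y)^2$, not something bounded by $\lambda^2$: near $y=1$ and $\lambda$ close to $1$ its $\omega$-expectation approaches $\E[(1-\omega)/\omega]=\beta/(\alpha-1)$ (infinite if $\alpha\le1$), which exceeds $1$ for many admissible pairs with $\alpha>\beta$ (e.g.\ $\alpha=1.2$, $\beta=1.1$). So the map is not a strict contraction in the sense you claim uniformly in $\lambda\in[0,1)$; the Jacobian you computed earlier is $\partial\omega/\partial V$, not $\partial V/\partial U$, which may be the source of the slip. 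This does not sink the proof, because your other uniqueness argument --- almost-sure convergence of the iterated continued fraction, which forces any distributional fixed point to coincide with the law of the a.s.\ limit --- is exactly the paper's route; but you should either drop the contraction claim or restrict it to small $\lambda$, and the continued-fraction convergence itself deserves a reference or a short argument, as in the paper. (The endpoint $\lambda=1$ is degenerate on both sides, $Z=1$ a.s.\ and $h^{(1)}(\alpha,\beta;1)$ only makes sense as the weak limit $\delta_1$, so your limiting interpretation there is the right one.)
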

\begin{proof}
  We start from the following elementary lemma. This identity comes from \cite{denhollander}, Lemma~VII.12\footnote{The first author 
  thanks Alejandro Ram\'irez for mentioning this identity and its relation with large deviation.}.
  \begin{lemma}
 The random variable $Z$ satisfies a distributional equation:
  \begin{equation}
 \label{distrib-equ}
 Z\; \equilaw \; {Y\over 1+Y-\lambda^2 Z},
 \end{equation}
 where on the right hand side, $Y$ is a random variable independent of $Z$ and with distribution
 $$
 Y\; \equilaw \; {U\over 1-U}, \;\;\; \hbox{ $U$ is beta$(\alpha,\beta)$ distributed}.
 $$
 \end{lemma}
 \begin{proof}
 Indeed, the following identity
 $$
 E^\w_0\left[\lambda^{\tau_1}\right]= \lambda \w_0 +(1-\w_0)E_{0}^{\theta_{-1}\w}\left[\lambda^{\tau_1}\right] E^\w_0\left[\lambda^{\tau_1}\right],
 $$
 where $\theta_{-1} \w$ is the environment shifted one step to the left, is an obvious consequence of Markov property.
 Since $\w_0$ and
 $E_{0}^{\theta_{-1}\w}\left[\lambda^{\tau_1}\right]$
 are independent, the relation (\ref{distrib-equ}) follows easily.
\end{proof}

There exists a unique distribution satisfying equation (\ref{distrib-equ}). Indeed, from \cite{denhollander}, Lemma VII.12, we can deduce from
Lemma~\ref{distrib-equ} that $\phi(\w,\lambda)$ can be represented as a converging continued fraction.

We make the change of variable to the random variable $X$ on $(0,+\infty)$ given by
$$
X={Z\over 1-Z}.
$$
Simple computation implies that $X$ must satisfy the distributional identity
  \begin{equation}
 \label{distrib-equ2}
 X\; \equilaw \; Y{1+X\over 1+(1-\lambda^2) X},
 \end{equation}
 where on the right hand side $Y$ is independent of $X$ with same distribution
as in the statement of Theorem \ref{lawZ}.
We first prove that if $X$ follows the distribution $h^{(2)}(\alpha,\beta;\lambda^2)(\d v)$ where $h^{(2)}(\alpha,\beta;z)(\d v)$ is the distribution
on $(0,+\infty)$ given by
$$
 {\Gamma(\alpha+\beta)\over \Gamma(\alpha)\Gamma(\beta)}{1\over F(\alpha, \alpha; \alpha+ \beta; \lambda^2)} 
 v^{\alpha-1}(1+v)^{-\beta} (1+(1-z)v)^{-\alpha} \indic_{\R_+}(v) \d v,
$$
then $X$ is solution of the distributional equation (\ref{distrib-equ2}). This is inspired by the type of explicit solutions that
appears in \cite{chamayou-letac}, Section 5.3, even if it does not seems to enter in one of the example treated in this paper
(even after change of variables). 

Let us assume that $X$ follows $h^{(2)}(\alpha, \beta; \lambda^2)$ and compute the $s$-moments, $s>0$, of both sides of (\ref{distrib-equ2}). The $s$-moment of the left hand side is
\begin{eqnarray*}
\E[X^s] &= &
{\Gamma(\alpha+\beta)\over \Gamma(\alpha)\Gamma(\beta)}{1\over F(\alpha, \alpha; \alpha+ \beta; \lambda^2)} 
\int_0^\infty  v^{\alpha+s-1}(1+v)^{-\beta} (1+(1-\lambda^2)v)^{-\alpha} \indic_{\R_+}(v) \d v
\\
&=&
{\Gamma(\alpha+\beta)\over \Gamma(\alpha)\Gamma(\beta)}{F(\alpha,\alpha+s; \alpha+\beta; \lambda^2)\over F(\alpha, \alpha; \alpha+ \beta; \lambda^2)} 
{\Gamma(\alpha+s) \Gamma(\beta-s)\over\Gamma(\alpha+\beta)}
\\
&=&
{\Gamma(\alpha+s)\Gamma(\beta-s)\over \Gamma(\alpha)\Gamma(\beta)}
{F(\alpha,\alpha+s; \alpha+\beta; \lambda^2)\over F(\alpha, \alpha; \alpha+ \beta; \lambda^2)},
\end{eqnarray*}
while, by independence, the $s$-moment of the right hand side of (\ref{distrib-equ2}) equals
\begin{eqnarray*}
\E[Y^s]\E\left[{(1+X)^s\over (1+(1-\lambda^2)X)^2}\right],
\end{eqnarray*}
and we have
$$
\E[Y^s]= 
{\Gamma(\alpha+s)\Gamma(\beta-s)\over \Gamma(\alpha)\Gamma(\beta)}
$$
and
\begin{eqnarray*}
&&\E\left[{(1+X)^s\over (1+(1-z)X)^2}\right]
\\
&=&
{\Gamma(\alpha+\beta)\over \Gamma(\alpha)\Gamma(\beta)}{1\over F(\alpha, \alpha; \alpha+ \beta; \lambda^2)} 
 \int_0^\infty v^{\alpha-1}(1+v)^{-\beta+s} (1+(1-\lambda^2)v)^{-\alpha-s} \indic_{\R_+}(v) \d v
\\
&=&
{F(\alpha+s, \alpha; \alpha+\beta; \lambda^2)\over F(\alpha, \alpha; \alpha+ \beta; \lambda^2)}.
\end{eqnarray*}
Using the property $F(a,b;c;z)=F(b,a;c;z)$, we get that the $s$-moments of both sides of (\ref{distrib-equ2}) coincides.
Simple computation shows that it implies that $Z$ is solution of the distributional identity (\ref{distrib-equ}) if it follows the law
$h^{(1)}(\alpha,\beta;\lambda^2)(\d v)$. The proposition follows from the uniqueness of the solution of this identity.

\end{proof}

\bibliographystyle{acm}
\bibliography{biblio}

\end{document}